\newcommand{\CC}{{\mathcal{C}}}
\newcommand{\bfk}{{\bf  k}}
\newcommand{\bfd}{{\bf d}}
\newcommand{\mv}{\mathsf{Vir}}
\newcommand{\td}{\tilde{\rd}}
\newcommand{\fp}{\mathfrak p}
\newcommand{\fb}{\mathfrak b}
\newcommand{\ft}{\mathfrak t}
\newcommand{\ff}{\mathfrak f}
\newcommand{\fg}{\mathfrak g}
\newcommand{\fh}{\mathfrak h}
\newcommand{\wt}{\widetilde}
\newcommand{\wh}{\widehat}
\newcommand{\Der}{\mathrm{Der}}
\newcommand{\geqs}{\geqslant}
\newcommand{\leqs}{\leqslant}
\newcommand{\ot}{\otimes}
\newcommand{\zdwz}{z^{\e-1}\delta\(\frac{w}{z}\)}
\newcommand{\wpw}{w^\e\frac{\partial}{\partial w}}
\newcommand{\C}{\mathbb{C}}
\newcommand{\N}{\mathbb N}
\newcommand{\Z}{\mathbb Z}
\newcommand{\rK}{\mathrm{K}}
\newcommand{\rD}{\mathrm{D}}
\newcommand{\rd}{\mathrm{d}}
\newcommand{\p}{\partial}
\newcommand{\e}{\epsilon}
\newcommand{\rk}{\mathrm{k}}
\newcommand{\ba}{\begin {eqnarray}}
\newcommand{\ea}{\end {eqnarray}}
\newcommand{\baa}{\begin {eqnarray*}}
\newcommand{\eaa}{\end {eqnarray*}}
\newcommand{\be}{\begin {equation}}
\newcommand{\ee}{\end {equation}}
\newcommand{\bee}{\begin {equation*}}
\newcommand{\eee}{\end {equation*}}
\newcommand{\U}{\mathcal{U}}
\newcommand{\te}[1]{\textnormal{{#1}}}
\theoremstyle{Theorem}
\theoremstyle{Theorem}
\newtheorem{thm}{Theorem}[section]
\newtheorem{lemt}[thm]{Lemma}
\newtheorem{prpt}[thm]{Proposition}
\newtheorem{thmt}[thm]{Theorem}
\newtheorem{remt}[thm]{Remark}
\theoremstyle{Theorem}
\theoremstyle{Theorem}
\theoremstyle{Plain}
\theoremstyle{Definition}
\newtheorem{dfnt}[thm]{Definition}
\def\({\left(}
\def\){\right)}
\def \<{{\langle}}
\def \>{{\rangle}}
\numberwithin{equation}{section}
\title[Irr Repn of Tor Lie Alg]{Irreducible modules of toroidal Lie algebras arising from $\phi_{\e}$-coordinated modules of vertex algebras}
\author{Fulin Chen}
\address{School of Mathematical Sciences, Xiamen University,
Xiamen, 361005, China} \email{chenf@xmu.edu.cn }
\author{Huansheng Li}\address{School of Mathematical Sciences, Xiamen University,
Xiamen, 361005, China} \email{hslee@stu.xmu.edu.cn}
\author{Nina Yu}
\address{School of Mathematical Sciences, Xiamen University,
Xiamen, 361005, China} \email{ninayu@xmu.edu.cn}
\subjclass[2010]{17B67, 17B69}
\keywords{toroidal Lie algebra, vertex algebra, $\phi_{\e}$-coordinated module, highest weight module}
\begin{document}
	
\begin{abstract}

In this paper, for every $\e\in \Z$, we introduce an extension of the 2-toroidal Lie algebra by certain derivations.
Based on the $\phi_\e$-coordinated modules theory for vertex algebras, we give an explicit realization of
 a class of irreducible highest weight modules for this extended toroidal Lie algebra.
When $\e=1$,  this affords a realization of certain irreducible modules for the toroidal extended affine Lie algebras first
constructed by Billig.

\end{abstract}

\maketitle
\section{Introduction}
Toroidal Lie algebras are  natural multi-variable generalizations of affine Lie
algebras. One starts with a finite-dimensional complex simple Lie algebra $\fg$ and
forms  the multi-loop algebra $\mathcal{R}\ot \fg$, where $\mathcal{R}= \C[t_0^{\pm 1},t_1^{\pm 1},\dots,t_N^{\pm 1}]$
is the ring of Laurent polynomials in the variables $t_0,t_1,\dots,t_N$.
The $(N+1)$-toroidal Lie algebra $\ft(\fg)$ is by definition  the universal central extension of $\mathcal{R}\ot \fg$
\cite{F,MRY,EM}.
When $N=0$, this yields the usual affine Lie algebra.
In  the literature, various extensions of $\ft(\fg)$
obtained by adding certain subalgebras of  $\Der (\mathcal R)$ (the Lie algebra of derivations over $\mathcal{R}$) have been studied extensively.
For examples, the subalgebra $\sum_{i=0}^N \C \rd_i(:= t_i\frac{\partial}{\partial t_i})$  of degree-zero derivations
\cite{E1,E2},
  the subalgebra $\mathcal{R} \rd_0\oplus \cdots\oplus \mathcal{R}\rd_{N-1}
  \oplus \C \rd_N$ \cite{EM,FM,BB,JM},
the subalgebra of divergence zero (or skew) derivations  \cite{B2,CLT1,ESB,CLT2},
and the algebra $\Der (\mathcal R)$ itself \cite{B1,EJ}.

In this paper we consider a sequence of new extensions
\[\wt\ft(\fg)^\e=\ft(\fg)\oplus \mathcal{D}^\e\quad (\e\in \Z)\] of
the $2$-toroidal Lie algebra $\ft(\fg)$, where
\begin{align}\label{intro:defde}
\mathcal{D}^\e=\bigg\{f_0t_0^{\e}\frac{\p}{\p t_0}+f_1\rd_1\mid
f_0,f_1\in \mathcal R,\ t_0^\e\frac{\p}{\p t_0}(f_0)+\rd_1(f_1)=0\bigg \}
\end{align}
 is a Lie subalgebra of $\Der (\mathcal R)$.
The Lie algebras $\mathcal{D}^\e$ are closely related to
the Lie algebras $\mathcal{B}(q)$, $q\in \C$ of Block type studied in \cite{CGZ,SXX1,SXX2,XZ}.
In fact, when $q=\e$, $\mathcal{B}(q)$ is a ``half" of $\mathcal{D}^\e$, which consists of those derivations  in \eqref{intro:defde}
satisfying the condition that $f_0,f_1$ are spanned by the
monomials $t_0^{m_0}t_1^{m_1}$ with $m_0,m_1\in \Z$ and $m_1\ge 0$.
In  particular, when $\e=1$, $\mathcal{D}^\e$ is the algebra of divergence zero  derivations
and  $\wt\ft(\fg)^\e$ is a nullity $2$ extended affine Lie algebra in the sense of \cite{AABGP},
which is commonly called toroidal extended affine Lie algebras \cite{B2}.

One of the most important classes of  modules for affine Kac-Moody algebras is the highest weight modules. Their  analogs for various extensions of toroidal Lie algebras have been extensively studied in \cite{B1,B2,BB,BBS,CLT1,CLT2,E1,E2,EJ,ESB,FM,JM}.
In this paper we will give an explicit realization of a class of irreducible highest weight
$\wt\ft(\fg)^\e$-modules. When $\e=1$, this construction have been previously obtained by Billig in \cite{B2}.

 The key  ingredient in our realization is the
theory of $\phi$-coordinated modules for vertex algebras,
which was first introduced in \cite{Li}
for the purpose of associating quantum affine algebras with (quantum) vertex algebra.
Here, $\phi$ is an associate
of the $1$-dimensional additive formal group (law) $F(z,w)=z+w$, which is by definition
 a formal series $\phi(w,z)\in \C((w))[[z]]$, satisfying the conditions
$$\phi(w,0)=w\quad \text{and}\quad  \phi(\phi(w,z_1),z_2)=\phi(w,z_1+z_2).$$
It was proved therein that for every $p(w)\in \C((w))$, $\phi(w,z)=e^{zp(w)\frac{d}{dw}}(w)$ is an associate
of $F(z,w)$ and every associate is of this form. When $p(w)=1$, the $\phi$-coordinated modules are just
the usual modules. When $p(w)=w^\e$ with $\e\in \Z$, set
\[\phi_\e(w,z)=e^{zw^\e\frac{d}{dw}}(w).\]
 The structure of $\phi_\e$-coordinated modules for vertex algebras was further studied in \cite{BLP}.
In particular, a Jacobi-like identity for $\phi_\e$-coordinated modules was established therein.

Now we give an outline of the structure of this paper. In Section 2, we  first introduce  the extended toroidal Lie algebra $\wt\ft(\fg)^\e$ and give the commutators in terms of certain generating functions on $\wt\ft(\fg)^\e$ (see Proposition \ref{prop:hgcomm}). We will work in a more general framework with $\fg$ an arbitrary Lie algebra equipped with an invariant symmetric bilinear form. Meanwhile, in the process of  adding $\mathcal{D}^\e$ to
$\ft(\fg)$, an abelian two-cocycle depending on any complex number $\mu$ will be involved.

In Section 3, we first recall some facts about affine-Virasoro algebras $\wh\fg\rtimes\mv$
 associated to $\fg$. Then we give the first main result of this paper in Theorem \ref{mainth}. Namely,  we construct  a class of irreducible $\wt\ft(\fg)^\e$-modules
 \begin{align}\label{intro:module}
  L_{\wh\fg\rtimes\mv}(\ell,24\mu\ell-2,U,\beta)\ot V_{\wh\fh}(\ell,e^{\alpha\bfk}\C[L])
 \end{align}
where  $\alpha,\beta,\ell\in \C$ with $\ell\ne 0$,  $U$ is an irreducible $\fg$-module,
$L_{\wh\fg\rtimes\mv}(\ell,24\mu\ell-2, U,\beta)$ is an irreducible highest weight module for the affine-Virasoro algebra
$\wh\fg\rtimes\mv$, and $V_{\wh\fh}(\ell,e^{\alpha\bfk}\C[L])$ is a highest weight module for the Heisenberg algebra
$\wh\fh$
associated to a $2$-dimensional abelian Lie algebra $\fh=\C\bfk\oplus \C\bfd$.
The second main result of this paper is a characterization of  these  irreducible $\wt\ft(\fg)^\e$-modules in terms of
the highest weight modules (see Theorem \ref{mainth2}).

Sections 4 and 5 are devoted to the proof of Theorem \ref{mainth}. In Section 4 we first recall some results on $\phi_\e$-coordinated modules.
 Let $V$ be a vertex algebra.  A fundamental property for a $V$-module $(W,Y_W)$ is that  for $u,v\in V$,
$Y_W(u_{-1}v,z)$ is just the normally ordered product $ _{\circ}^{\circ}Y_W(u,z)Y_W(v,z) _{\circ}^{\circ}$ (see \cite{LL} for example).
We prove in Proposition \ref{th7} that for a $\phi_\e$-coordinated $V$-module $(W,Y_W^\e)$,
\begin{align}\label{intro:u-1vformula}
Y_W^\e(u_{-1}v,z)= _{\circ}^{\circ}Y_W^\e(u,z)Y_W^\e(v,z) _{\circ}^{\circ}+\sum_{n\ge 0}c_n z^{(\e-1)(n+1)} Y_W^\e(u_n v,z),\end{align}
where $c_n$ are some explicitly determined complex numbers.
We also determine  the $\phi_\e$-coordinated modules for
vertex algebras arising from vertex Lie algebras introduced in \cite{DLM, Kac,P} (see  Proposition \ref{prop:vla2}).

In Section 5, we first introduce a subalgebra $\wh\ft(\fg)^\e$ of $\wt\ft(\fg)^\e$ such that
$\wt\ft(\fg)^\e=\wh\ft(\fg)^\e\oplus \C t_0^{\e-1}\rd_1$.
When $\epsilon=0$, $\wh\ft(\fg)^0$ is a vertex Lie algebra  and
therefore  there is a  vertex algebra
$V_{\wh\ft(\fg)^0}(\gamma_\ell)$ associated to $\wh\ft(\fg)^0$ for any nonzero complex number $\ell$ \cite{DLM}.
Based on a result of \cite{B1}, it was proved in \cite{CLiT} that there is a surjective vertex algebra homomorphism
\[\Theta:\quad V_{\wh\ft(\fg)^0}(\gamma_\ell)\longrightarrow V_{\wh\fg\rtimes \mv}(\ell,24\mu\ell-2)\otimes V_{(\fh,L)},\]   where
 $V_{\wh\fg\rtimes \mv}(\ell,24\mu\ell-2)$ is the affine-Virasoro vertex algebra and $V_{(\fh,L)}$ is the lattice vertex algebra associated to the pair $(\fh,L:=\Z\bfk)$ \cite{B1,LW}. Note that $V_{(\fh,L)}$ can be realized as a quotient of the affine vertex algebra
associated to an affine Lie algebra $\wh\fp$ \cite{LW}.
By applying Proposition \ref{prop:vla2}, we deduce in Section 5.1 the following three correspondences:
\vspace{3mm}
\begin{itemize}

\item $\xymatrix{*+[F]{\te{restricted $\wh\ft(\fg)^\e$-modules of level $\ell$}}\ar[rr]^-{1-1}&&*+[F]{\te{$\phi_\e$-coordinated $V_{\wh\ft(\fg)^0}(\gamma_\ell)$-modules}}\ar[ll]}$

\item $\xymatrix{*+[F]{\te{certain restricted $\wh\fg\rtimes \mv$-modules}}\ar[rr]^-{1-1}&&*+[F]{\te{$\phi_\e$-coordinated $V_{\wh\fg\rtimes \mv}(\ell,24\mu\ell-2)$-modules}}\ar[ll]}$

\item $\xymatrix{*+[F]{\te{certain restricted $\wh\fp$-modules}}\ar[rr]^-{1-1}&&*+[F]{\te{$\phi_\e$-coordinated $V_{(\fh,L)}$-modules}}\ar[ll]}$
\end{itemize}
\vspace{3mm}

The second correspondence  implies  that
there is an irreducible $\phi_\e$-coordinated $V_{\wh\fg\rtimes \mv}(\ell,24\mu\ell-2)$-module
structure on the  $\wh\fg\rtimes\mv$-module $L_{\wh\fg\rtimes\mv}(\ell,24\mu\ell-2, U,\beta)$ (see Proposition \ref{cocor}).
 Meanwhile, the third correspondence allows us to give an irreducible
 $\phi_\e$-coordinated $V_{(\fh,L)}$-module
structure on the  $\wh\fh$-module $V_{\wh\fh}(\ell,e^{\alpha\bfk}\C[L])$ (see Proposition \ref{vhaisvhlmod}).
Thus, the tensor product space  \eqref{intro:module} becomes an irreducible $\phi_\e$-coordinated $V_{\wh\fg\rtimes \mv}(\ell,24\mu\ell-2)\ot V_{(\fh,L)}$-module.
Via the homomorphism $\Theta$, this $\phi_\e$-coordinated $V_{\wh\fg\rtimes \mv}(\ell,24\mu\ell-2)\ot V_{(\fh,L)}$-module
carries a canonical (irreducible) $\phi_\e$-coordinate  $V_{\wh\ft(\fg)^0}(\gamma_\ell)$-module structure.
Then by applying the first correspondence above,
 the space  \eqref{intro:module} automatically admits
 an irreducible  $\wh\ft(\fg)^\e$-module structure.
This irreducible $\wh\ft(\fg)^\e$-module  can be extended to a $\wt\ft(\fg)^\e$-module  (see Theorem \ref{thm3}).
And, based on the formula \eqref{intro:u-1vformula}, we are able to compute the module action explicitly,
which is exactly what we state in Theorem \ref{mainth}.

In the rest of this paper, we let $\Z,\ \Z^{\times},\ \N,\ \C$ and $\C^{\times}$ be the sets of integers,
nonzero integers, nonnegative integers, complex numbers and nonzero complex numbers, respectively.
And, let $z,w,z_{0},z_{1},z_{2},\dots$ be mutually
commuting independent formal variables.
For a vector space $W$, $W[[z_1,z_2,\dots,z_r]]$ denotes the space of formal (possibly doubly
infinite) power series in $z_1,z_2,\dots,z_r$ with coefficients in $W$, and
$W((z_1,z_2,\dots,z_r))$  denotes the space of lower truncated Laurent power series in
$z_1,z_2,\dots,z_r$ with coefficients in $W$.

\section{Toroidal  Lie algebras } \label{sec2}
In this section we introduce the extended toroidal Lie algebras we concern about in this paper.

\subsection{Toroidal Lie algebras $\wt{\ft}(\fg)^\e$}
Let $\mathcal R=\C[ t_0^{\pm 1},  t_1^{\pm 1}]$ be
the Laurent polynomial ring in the variables $t_0$ and $t_1$. Denote by
\[\Omega_\mathcal R^1=\mathcal R \rd t_0\oplus \mathcal R \rd t_1=\mathcal R \rk_0\oplus \mathcal R\rk_1\]
the space of $1$-forms on $\mathcal R$, where $\rk_0:=t_0^{-1}\rd t_0$ and $\rk_1:=t_1^{-1}\rd t_1$.
Set
\[\rd({\mathcal R})=\{ \rd(f)\, |\, f\in \mathcal R\},\]
the space of exact $1$-forms, where
$$\rd(f):=\frac{\p f}{\p t_0}\rd t_0+\frac{\p f}{\p t_1}\rd t_1=
t_0\frac{\p f}{\p t_0}\rk_0+t_1\frac{\p f}{\p  t_1}\rk_1 \in \Omega_{\mathcal R}^1$$
is  the differential of $f$.
Form the quotient vector space
\begin{align*}
\mathcal K=\Omega_{\mathcal R}^1/ \rd({\mathcal R}).
\end{align*}
For  $m_0, m_1\in \Z$, set
\begin{equation*}
\rk_{m_0,m_1}=\begin{cases}\frac{1}{m_1} t_0^{m_0}t_1^{m_1}\rk_0,\ &\te{if}\ m_1\ne 0\\
-\frac{1}{m_0} t_0^{m_0}\rk_1,\ &\te{if}\ m_1=0,\ m_0\ne 0\\
0,\ &\te{if}\ m_0=m_1=0.\end{cases}
\end{equation*}
Then the set
\begin{align}\label{eq:basis1}
\{\rk_0, \rk_1\}\cup \{\rk_{m_0, m_1}\mid m_0,m_1\in \Z\ \te{with}\ (m_0,m_1)\ne (0,0)\}
\end{align}
 is a basis of $\mathcal K$,
noting that these  elements in $\Omega_\mathcal R^1$ should be understood as their images in $\mathcal{K}$.

Let $\fg$ be a Lie algebra equipped with a symmetric invariant bilinear form $\<\cdot,\cdot\>$.
Form a central extension of the double loop algebra $\mathcal R\ot \fg$:
\begin{align*}
\ft(\fg)=(\mathcal R\ot \fg)\oplus \mathcal K,
\end{align*}
 called the {\em $2$-toroidal Lie algebra}, where  $\mathcal K$ is central and
\begin{equation}\begin{split}\label{fre1}
[t_0^{m_0}t_1^{m_1}\ot u, t_0^{n_0}t_1^{n_1}\ot v]
=t_0^{m_0+n_0}t_1^{m_1+n_1}\ot [u,v]
+\< u,v\> \sum_{r=0, 1} m_r t_0^{m_0+n_0}t_1^{m_1+n_1}\rk_r
\end{split}
\end{equation}
for $u,v\in \fg$, $m_0,n_0, m_1,n_1\in \Z$.
If $\fg$ is  a  finite-dimensional simple Lie algebra and $\<\cdot,\cdot\>$  is nondegenerate,
$\ft(\fg)$ is known as a universal central extension of $\mathcal R\ot \fg$ (see \cite{MRY}).

We denote by
\[\Der (\mathcal R)=\mathcal{R}\frac{\p}{\p t_0}\oplus \mathcal{R}\frac{\p}{\p t_1}=
\mathcal{R}\rd_0\oplus \mathcal{R}\rd_1\] the
derivation Lie algebra on $\mathcal R$, where
$\rd_0:=t_0\frac{\p }{\p t_0}$ and $\rd_1:=t_1\frac{\p }{\p t_1}$.
By adding $\Der (\mathcal R)$ to $\ft(\fg)$, we obtain the \textit{full $2$-toroidal Lie algebra} (see \cite{B1})
\begin{align*}
\mathcal{T}(\fg)=\mathcal{T}(\fg)_\mu:=\ft(\fg)\oplus \Der (\mathcal R)
=(\mathcal{R}\ot \fg)\oplus \mathcal{K}\oplus \Der (\mathcal R),
\end{align*}
with Lie relations
\begin{alignat}{1}
\left[t_{0}^{m_{0}}t_{1}^{m_{1}}\rd_{i},t_{0}^{n_{0}}t_{1}^{n_{1}}\ot x\right] & =n_{i}(t_{0}^{m_{0}+n_{0}}t_{1}^{m_{1}+n_{1}}\ot x),\nonumber \\
\left[t_{0}^{m_{0}}t_{1}^{m_{1}}\rd_{i},t_{0}^{n_{0}}t_{1}^{n_{1}}\rk_{j}\right] & =n_{i}t_{0}^{m_{0}+n_{0}}t_{1}^{m_{1}+n_{1}}\rk_{j}+\delta_{i,j}\sum_{r=0,1}m_{r}t_{0}^{m_{0}+n_{0}}t_{1}^{m_{1}+n_{1}}\rk_{r},\nonumber \\
\left[pt_{0}^{m_{0}}t_{1}^{m_{1}}\rd_{i},t_{0}^{n_{0}}t_{1}^{n_{1}}\rd_{j}\right] & =n_{i}t_{0}^{m_{0}+n_{0}}t_{1}^{m_{1}+n_{1}}\rd_{j}-m_{j}t_{0}^{m_{0}+n_{0}}t_{1}^{m_{1}+n_{1}}\rd_{i}\label{eq:2.5}\\
 & -\mu m_{j}n_{i}\left(m_{0}t_{0}^{m_{0}+n_{0}}t_{1}^{m_{1}+n_{1}}\rk_{0}+m_{1}t_{0}^{m_{0}+n_{0}}t_{1}^{m_{1}+n_{1}}\rk_{1}\right)\nonumber
\end{alignat}
for $x\in \fg$, $m_0,n_0,m_1,n_1\in \Z$ and $i,j\in \{0,1\}$, where
$\mu$ is a fixed complex number  throughout this paper.
The algebra $\mathcal{T}(\fg)$ is $\Z$-graded with respect to the adjoint action of $-\rd_0$,
i.e., $$\mathcal{T}(\fg)=\oplus_{n\in \Z}\mathcal{T}(\fg)_{(n)},$$ where
\begin{align*}
\mathcal{T}(\fg)_{(n)}=\{x\in \mathcal{T}(\fg)\mid [\rd_0,x]=-nx\}.
\end{align*}

For a given integer $\e$, set
\begin{align*}
\mathcal{D}^\e=\bigg\{f_0t_0^{\e}\frac{\p}{\p t_0}+f_1\rd_1\mid
f_0,f_1\in \mathcal R,\ t_0^\e\frac{\p}{\p t_0}(f_0)+\rd_1(f_1)=0\bigg \}\subset \Der (\mathcal R).
\end{align*}
For $m_0,m_1\in \Z$, write
\begin{equation*}
\td_{m_0,m_1}^\e
=(m_0-\e+1)t_0^{m_0}t_1^{m_1}\rd_1-m_1t_0^{m_0}t_1^{m_1}\rd_0,
\end{equation*}
which is an element of $\mathcal{D}^\e$.
A simple fact is that the set
\begin{align}\label{eq:basis2}
\{t_0^{\e-1}\rd_0,t_0^{\e-1}\rd_1\}\cup
\{ \td^\e_{m_0,m_1}\mid (m_0,m_1)\in (\Z\times \Z)\backslash \{ (\e-1,0)\}\}
\end{align}
is a basis  of $\mathcal{D}^\e$.
Furthermore, $\mathcal{D}^\e$ is a Lie subalgebra of $\Der (\mathcal R)$ with the  relations
\begin{align*}
[t_0^{\e-1}\rd_0,\td^\e_{m_0,m_1}]
&=\(m_0-\e+1\)\td^\e_{m_0+\e-1,m_1},\quad [t_0^{\e-1}\rd_1,\td^\e_{m_0,m_1}]
=m_1\td^\e_{m_0+\e-1,m_1},\\
[\td^\e_{m_0,m_1},\td^\e_{n_0,n_1}]
&=\left((m_0-\e+1)n_1-m_1(n_0-\e+1)\right)\td^\e_{m_0+n_0,m_1+n_1},
\end{align*}
where $m_0, m_1, n_0, n_1\in \Z$.
Set
\begin{align*}
\wt\ft(\fg)^\e&=\ft(\fg) \oplus \mathcal{D}^{\e}\subset \mathcal{T}(\fg),
\end{align*}
which is the Lie algebra we study in this paper.
	
\begin{remt}{\em  When $\e=1$, the form $\<\cdot,\cdot\>$ on $\fg$ can be extended
to an invariant symmetric bilinear form $(\cdot,\cdot)$ on
the algebra $\wt\ft(\fg)^1$ such that the nontrivial values are as follows:
\[(t_0^{m_0}t_1^{m_1}\ot u, t_0^{-m_0}t_1^{-m_1}\ot v)=\<u,v\>,\quad
(\rk_{n_0,n_1},\rd_{-n_0,-n_1})=-1,\quad (\rk_i,\rd_i)=1\]
where $u,v\in \fg, m_0,m_1,n_0,n_1\in \Z$ with $(n_0,n_1)\ne (0,0)$ and $i=0,1$. Furthermore,
if $\fg$ is a finite-dimensional simple Lie algebra equipped with a Cartan subalgebra $H$, then the triple
\[\(\wt\ft(\fg)^1,\wt{H}:=H+\C\rk_0+\C\rk_1+\C\rd_0+\C\rd_1,(\cdot,\cdot)\)\] is an extended affine Lie algebra in the sense of
\cite{AABGP}, and is often referred as the nullity-2 toroidal extended affine Lie algebra \cite{B2,CLT1}.}
\end{remt}

\subsection{Relations of generating functions on $\wt\ft(\fg)^\e$}
	Consider the following subalgebra of $\wt\ft(\fg)^\e$:
\begin{align*}
\wt\fg_1=\(\C[t_1,t_1^{-1}]\ot \fg\)\oplus \C\rk_1\oplus \C\rd_1,
\end{align*}
which is isomorphic to the affine Kac-Moody algebra associated to $\fg$  \cite{Kac1}.
For any $m\in\mathbb{Z}$ and $u=t_1^n\otimes x + bk_1+cd_1\in\wt\fg_1$ with $x \in \fg$, $n\in \Z$ and $b, c\in \mathbb{C}$, we will often write
 \[t_0^mu:=t_0^mt_1^n\otimes x+bt_0^mk_1+ct_0^md_1\in \wt\ft(\fg)^\e.\]
In addition, for $m,n\in \Z$, set
\begin{align*}
\rd_{n,m}^\e
:=\tilde{\rd}_{n,m}^\e +\mu (1-\e)\( n+\frac{1}{2}(1-\e) \)m^2\rk_{n,m}.
\end{align*}
Let $\mathcal{B}$ be a vector space  with a designated basis
$\{ \rK_n,\, \rD_n\mid n\in \Z^{\times}\}.$  Set
\begin{align}\label{defag}\mathcal B_\fg=\widetilde\fg_1\oplus \mathcal{B}
=\(\C[t_1,t_1^{-1}]\ot \fg\)\oplus \C \rk_1\oplus \C \rd_1\oplus \sum_{n\in \Z^{\times}}\left(\C \rK_n\oplus \C \rD_n\right).
\end{align}

Define a linear map
\[\psi_\fg^\e:\mathcal{B}_\fg\rightarrow \wt\ft(\fg)^\e[[z,z^{-1}]],\quad a\mapsto a^\e(z)\] by letting
\begin{align*}
u^\e(z)&=\sum_{n\in \Z}(t_0^nu) z^{\e-n-1},\quad {
\rD_m^\e(z)=\sum_{n\in \Z} \rd^\e_{n,m} z^{2\e-n-2},\quad
\rK_m^\e(z)=\sum_{n\in\Z} \rk_{n,m} z^{-n},}
\end{align*}
where $u\in \wt\fg_1,\ m\in \Z^\times$.
Recall that $\rk_{m,0}=-\frac{1}{m}t_0^n\rk_1$ for $m\in \Z^\times$ and $\rd^\e_{n,0}=\tilde{\rd}^\e_{n,0}=(n-\e+1 ) t_0^n\rd_1$ for $n\in \Z$.
It follows from \eqref{eq:basis1} and \eqref{eq:basis2} that  the coefficients of $a^\e(z)$ for $a\in \mathcal B_\fg$ together with the elements  $\rk_0$ and $t_0^{\e-1}\rd_0$
linearly span the algebra $\wt\ft(\fg)^\e$.

For $a\in \C$ and $r\in \N$, set
\begin{align*}
a^{(r)}_\e=\prod_{s=0}^{r-1} (a+s(\e-1)).\end{align*}
The following  result will be used later:

\begin{lemt}\label{tech}
Let $p\in \N$, $a,b,\alpha,\beta\in\C$.
Then
\begin{align*}
\sum_{r=0}^p\binom{p}{r}\alpha^{p-r}{a_\e}^{(p-r)}(-\beta)^r {b_\e}^{(r)}
=\sum_{s=0}^{p}\binom{p}{s}(\alpha+\beta)^{p-s}a^{(p-s)}_\e \beta^s (-a-b-(p-1)(\e-1))^{(s)}_\e.
\end{align*}
\end{lemt}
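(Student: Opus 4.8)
The identity is a polynomial identity in the four variables $a,b,\alpha,\beta$ (with $p$ fixed and $\e$ a parameter), so the plan is to reduce it to a generating-function statement about the "shifted powers" $a_\e^{(r)}=\prod_{s=0}^{r-1}(a+s(\e-1))$. The key observation is that these shifted factorials behave like ordinary powers under a suitable substitution: setting $q=\e-1$, one has $a_\e^{(r)}=q^r\,(a/q)^{\overline{r}}$ where $x^{\overline{r}}=x(x+1)\cdots(x+r-1)$ is the rising factorial, and rising factorials have the clean binomial-type expansion (Vandermonde/Chu) $(x+y)^{\overline{p}}=\sum_{r}\binom{p}{r}x^{\overline{r}}y^{\overline{p-r}}$ only after one accounts for the sign twist coming from the $-\beta$ and the $(-a-b-\cdots)$ terms. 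So the first step I would take is to rewrite both sides purely in terms of rising factorials of $a/q$, $b/q$, $\alpha/q$, $\beta/q$, pulling out the powers of $q$, and checking that the powers of $q$ match (both sides carry an overall $q^p$ after the dust settles, since every term on each side is a product of exactly $p$ linear factors in the relevant variables).

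The cleanest route is then via a \emph{generating function in an auxiliary variable} $t$. Recall the umbral/Newton identity
\begin{align*}
\sum_{r\ge 0} a_\e^{(r)}\,\frac{t^r}{r!} = (1-(\e-1)t)^{-a/(\e-1)}
\end{align*}
(interpreted as $e^{at}$ when $\e=1$). The left-hand side of the lemma is, up to the binomial coefficients, the coefficient extraction from a product of two such series: $\sum_r \binom{p}{r}\alpha^{p-r}a_\e^{(p-r)}(-\beta)^r b_\e^{(r)}$ is $p!$ times the coefficient of $t^p$ in $\big(\sum_k (\alpha t)^k a_\e^{(k)}/k!\big)\big(\sum_k (-\beta t)^k b_\e^{(k)}/k!\big)$, which equals $(1-(\e-1)\alpha t)^{-a/(\e-1)}(1+(\e-1)\beta t)^{-b/(\e-1)}$. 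The plan is to manipulate this product of two powers of linear functions of $t$ into a single product that, upon re-expanding, reproduces the right-hand side. The substitution that does this is $t\mapsto$ a new variable absorbing the factor $(1-(\e-1)\alpha t)/(1+(\e-1)\beta t)$ or similar; concretely, write $1+(\e-1)\beta t = \big(1-(\e-1)(\alpha+\beta)(-t')\big)$-type relations to see that the combination $(\alpha+\beta)$ appearing on the right is forced, and the exponent $-a-b-(p-1)(\e-1)$ is exactly what you get from combining $-a/(\e-1)$ and $-b/(\e-1)$ and then correcting by the $p$-th coefficient extraction (the $(p-1)(\e-1)$ shift is the familiar ``derivative lowers the exponent'' effect when you extract the $t^p$ coefficient after a change of variable whose Jacobian contributes).

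I expect the main obstacle to be bookkeeping the signs and the exponent shift correctly: the right-hand side has $\beta^s$ (not $(-\beta)^s$) and the last factor is $(-a-b-(p-1)(\e-1))_\e^{(s)}$, so the change of variables must simultaneously flip the sign in the second series and convert the product-of-two-powers into $(\alpha+\beta)$-power times $\beta$-power, while the $-(p-1)(\e-1)$ term pins down which coefficient is being extracted. A safe fallback, if the generating-function bookkeeping gets unwieldy, is a direct induction on $p$: the case $p=0$ is trivial ($1=1$), and the inductive step follows by applying the operator $\alpha\,\partial_? $ — more precisely, observing that both sides, viewed as functions of $\alpha$ (or of $\beta$), satisfy the same first-order recursion in $p$ obtained from the Pascal identity $\binom{p}{r}=\binom{p-1}{r}+\binom{p-1}{r-1}$ together with the factorization $x_\e^{(r)}=x\cdot(x+(\e-1))_\e^{(r-1)}$. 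Either way the substance is the rising-factorial Vandermonde identity in disguise; the work is entirely in matching the three decorations (the $(\e-1)$-shifts, the signs, and the $(\alpha+\beta)$ collapse) on the nose.
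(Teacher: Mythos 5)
You have correctly located the identity in the Chu--Vandermonde/Pfaff family for the shifted factorials $a_\e^{(r)}$, but neither of your two routes is actually carried to the point where the lemma is proved, and each has a concrete unresolved difficulty. For the generating-function route: the left side is indeed $p!$ times the $t^p$-coefficient of $(1-(\e-1)\alpha t)^{-a/(\e-1)}(1+(\e-1)\beta t)^{-b/(\e-1)}$, but the right side is \emph{not} the $t^p$-coefficient of any $p$-independent product of two such series, because the parameter $-a-b-(p-1)(\e-1)$ in the last factor moves with $p$; equivalently, $(-a-b-(p-1)(\e-1))_\e^{(s)}=(-1)^s\prod_{j=p-s}^{p-1}(a+b+j(\e-1))$ is a ``top-down'' segment of a shifted factorial, so the right side is a terminating ${}_2F_1$ with a $p$-dependent denominator parameter. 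The ``substitution that does this'' is therefore exactly the Pfaff transformation you would be trying to prove, and your sketch never exhibits it. For the inductive fallback: the assertion that both sides satisfy the same first-order recursion is not true as stated. Pascal's rule together with $x_\e^{(r)}=x\,(x+\e-1)_\e^{(r-1)}$ gives $L_p(a,b)=a\alpha\,L_{p-1}(a+\e-1,b)-b\beta\,L_{p-1}(a,b+\e-1)$ for the left side, but for the right side it gives $R_p(a,b)=a(\alpha+\beta)\,R_{p-1}(a+\e-1,b)-\beta\,(a+b+(p-1)(\e-1))\,R_{p-1}(a,b)$. Closing the induction then requires the contiguous relation $a\,R_{p-1}(a+\e-1,b)+b\,R_{p-1}(a,b+\e-1)=(a+b+(p-1)(\e-1))\,R_{p-1}(a,b)$, an auxiliary identity of essentially the same difficulty as the lemma, which you neither state nor prove.

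The paper's own argument is a short, fully explicit double-sum manipulation that you could adopt: expand $\alpha^{p-r}=\big((\alpha+\beta)-\beta\big)^{p-r}$ binomially, reindex with $s=r+t$ using $\binom{p-r}{t}\binom{p}{r}=\binom{p}{r+t}\binom{r+t}{r}$ and the factorization $a_\e^{(p-r)}=a_\e^{(p-s)}\big(a+(p-s)(\e-1)\big)_\e^{(s-r)}$, collapse the inner sum over $r$ with the Vandermonde-type identity $(a+b)_\e^{(p)}=\sum_i\binom{p}{i}a_\e^{(i)}b_\e^{(p-i)}$ (which is the one genuinely structural input, proved by induction on $p$), and finish with the reflection $(-1)^s\,x_\e^{(s)}=\big(-x-(s-1)(\e-1)\big)_\e^{(s)}$ applied to $x=a+b+(p-s)(\e-1)$. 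Your proposal never isolates these last two elementary identities, which are precisely what turn the ``Vandermonde in disguise'' observation into a proof.
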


\begin{proof}
Note that we have the following identity
\begin{align}\label{newton}
(a+b)^{(p)}_\e=\sum_{i=0}^{p} \binom{p}{i}a^{(i)}_\e b^{(p-i)}_\e,
\end{align}
which can be proved by induction on $p$.
Using \eqref{newton} and the facts
$$\binom{p-r}{t}\binom{p}{r}= \binom{p}{r+t}\binom{r+t}{r}\quad \(0\leqs r\leqs p,\ 0\leqs t\leqs p-r\), $$
and
$$a_\e^{(p-r)}=a_\e^{(p-s)}\(a+(p-s)(\e-1)\)_\e^{(s-r)} \quad  \(0\leqs r\leqs s\leqs p\),$$
we get
\begin{align*}
&\sum_{r=0}^p\binom{p}{r}\alpha^{p-r}a_\e^{(p-r)}(-\beta)^r b_\e^{(r)}\\
=\ &\sum_{r=0}^p\sum_{t=0}^{p-r}\binom{p-r}{t}\binom{p}{r}(\alpha+\beta)^{p-r-t}a_\e^{(p-r)}(-\beta)^{r+t} b_\e^{(r)}\\
=\ &\sum_{r=0}^p\sum_{t=0}^{p-r}\binom{p}{r+t}\binom{r+t}{r}(\alpha+\beta)^{p-r-t}a_\e^{(p-r)}(-\beta)^{r+t} b_\e^{(r)}\\
=\ &\sum_{s=0}^p\sum_{r=0}^{s}\binom{p}{s}\binom{s}{r}(\alpha+\beta)^{p-s}a_\e^{(p-r)}(-\beta)^{s} b_\e^{(r)}\\
=\ &\sum_{s=0}^p\sum_{r=0}^{s}\binom{p}{s}\binom{s}{r}(\alpha+\beta)^{p-s}
a_\e^{(p-s)}(a+(p-s)(\e-1))_\e^{(s-r)}(-\beta)^{s} b_\e^{(r)}\\
=\ &\sum_{s=0}^p\binom{p}{s}(\alpha+\beta)^{p-s}a_\e^{(p-s)}(a+b+(p-s)(\e-1))_\e^{(s)}(-\beta)^{s} \\
=\ &\sum_{s=0}^p\binom{p}{s}(\alpha+\beta)^{p-s}a_\e^{(p-s)}(-a-b-(p-1)(\e-1))_\e^{(s)}\beta^{s}.
\end{align*}
\end{proof}

The following proposition collects the Lie  relations among these generating functions.
\begin{prpt}\label{prop:hgcomm}
For $u,v\in \fg$, $a\in \mathcal{B}_{\fg}$, $m,n\in \Z$ and $k,l\in\Z^{\times}$, we have
\begin{align*}
&\ \te{(1)}\quad [(t_1^m\ot u)^\e(z),(t_1^n\ot v)^\e(w)] =\left(t_1^{m+n}\ot [u,v]\right)^\e(w)\zdwz\\
&\quad\quad \   +\<u,v\>m\(\wpw\rK^\e_{m+n}(w)\)\zdwz
+\<u,v\>(m+n)\rK^\e_{m+n}(w)\wpw\zdwz\\
&\quad\quad \  +\delta_{m+n,0}\<u,v\>\(m\rk_1^\e(w)\zdwz+\rk_0\wpw\zdwz \),\\
&\ \te{(2)}\quad [\rk_1^\e(z),(t_1^n\ot u)^\e(w)]=0=[\rK^\e_k(z),(t_1^n\ot u)^\e(w)],\\
&\ \te{(3)}\quad[\rD_k^\e(z),(t_1^n\ot u)^\e(w)]= k\(\wpw(t_1^{k+n}\ot u)^\e(w)\)\zdwz\\
&\quad\quad \ +(k+n) (t_1^{k+n}\ot u)^\e(w)\wpw\zdwz,\\
&\ \te{(4)}\quad [\rd_1^\e(z),(t_1^n\ot u)^\e(w)]=n(t_1^n\ot u)^\e(w)\zdwz,\\
&\ \te{(5)}\quad [\rk_1^\e(z),\rk_1^\e(w)]=[\rK^\e_k(z),\rK^\e_l(w)]=[\rk_1^\e(z),\rK^\e_l(w)]=0,\\
&\ \te{(6)}\quad [\rD_k^\e(z),\rK^\e_l(w)] =k\(\wpw \rK^\e_{k+l}(w)\)\zdwz+(k+l) \rK^\e_{k+l}(w)\wpw\zdwz \\
&\quad \quad \    +\delta_{k+l,0}\(k\rk_1^\e(z) \zdwz  +\rk_0\wpw\zdwz\),\\
&\ \te{(7)} \quad [\rD_k^\e(z),\rk_1^\e(w)] =k\wpw\(\(\wpw \rK^\e_k(w)\)\zdwz +\rK^\e_k(w)\wpw \zdwz \),\\
&\ \te{(8)}\quad [\rd^\e_1(z),\rK^\e_{n}(w)]=n\rK^\e_{n}(w)\zdwz,\\
&\ \te{(9)}\quad [\rd_1^\e(z),\rk_1^\e(w)]=\rk_0\zdwz,\quad\quad [\rd_1^\e(z),\rd_1^\e(w)]=0, \\
&\ \te{(10)}\ \  [\rD^\e_{k}(z),\rD^\e_{l}(w)] =k\(\wpw \rD^\e_{k+l}(w)\)\zdwz+(k+l) \rD^\e_{k+l}(w)\wpw\zdwz\\
&\quad \quad\  -k\delta_{k+l,0} \(\(\wpw \)^2 \rd_1^\e(w) \)\zdwz +\mu k^3\delta_{k+l,0}\(\(\wpw\)^2\rk_1^\e(w)  \)\zdwz\\
&\quad  \quad\  +\mu\sum_{r=0}^3{3\choose r}\(\(k\wpw\)^r \rK^\e_{k+l}(w)\)\((k+l)\wpw\)^{3-r}\zdwz,\\
&\ \te{(11)}\quad [\rd^\e_1(z),\rD^\e_{l}(w)]=l\rD^\e_{l}(w)\zdwz+\mu l^3 \rK^\e_l(w) \(\wpw\)^2 \zdwz,\\
&\ \te{(12)}\quad [t_0^{\e-1}\rd_0,a^\e(z)]=-z^{\e}\frac{d}{dz} a^\e(z),
\end{align*}
where $\rK_0^\e(z)$ and $\rD_0^\e(z)$ are understood as the zero formal series.
\end{prpt}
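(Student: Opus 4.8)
The plan is to establish all twelve identities by one uniform procedure. Each generating function in the statement is a (two-sided) formal series whose coefficients are explicit elements of $\wt\ft(\fg)^\e$, and since the coefficient of any monomial $z^aw^b$ on the left-hand side of an identity is a \emph{single} Lie bracket of such elements, all the brackets below are meaningful with no convergence issue. So for each identity I would (i) substitute the defining expansions of $u^\e(z)$, $\rK^\e_m(z)$, $\rD^\e_m(z)$, $\rk^\e_1(z)$ and $\rd^\e_1(z)$; (ii) evaluate the resulting coefficient brackets inside $\wt\ft(\fg)^\e\subset\mathcal T(\fg)$ using \eqref{fre1}, \eqref{eq:2.5}, the relations for $\mathcal D^\e$ recorded above, and the defining shift $\rd^\e_{n,m}=\td^\e_{n,m}+\mu(1-\e)\bigl(n+\tfrac12(1-\e)\bigr)m^2\,\rk_{n,m}$; and (iii) repackage the outcome into the displayed closed form.

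The repackaging in step (iii) uses only elementary formal calculus. One has $\zdwz=z^{\e-1}\delta(w/z)=\sum_{n\in\Z}z^{\e-n-1}w^n$, and with $\wpw=w^\e\frac{\p}{\p w}$ one computes $(\wpw)^rw^a=a^{(r)}_\e\,w^{a+r(\e-1)}$ with $a^{(r)}_\e=\prod_{s=0}^{r-1}(a+s(\e-1))$ exactly as in Lemma \ref{tech}, hence $(\wpw)^r\zdwz=\sum_n n^{(r)}_\e\,z^{\e-n-1}w^{n+r(\e-1)}$ and, after a reindexing, $\bigl(z^\e\frac{d}{dz}+\wpw\bigr)\zdwz=0$. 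Moreover, any double sum over $t_0$-exponents whose coefficient depends only on the total exponent $j$ collapses via $\sum_{p,q}h(p+q)z^{\e-p-1}w^{\e-q-1}=\bigl(\sum_j h(j)w^{\e-j-1}\bigr)\zdwz$; factors of the individual exponents that survive from the structure constants are then absorbed into $\wpw$ (trading $z^\e\frac{d}{dz}$ for $-\wpw$ as needed), and one recognizes the surviving $t_0$-series using the congruences $t_0^jt_1^l\rk_0\equiv l\,\rk_{j,l}$, $t_0^jt_1^l\rk_1\equiv -j\,\rk_{j,l}$ (for $l\ne 0$) and $t_0^j\rk_0\equiv\delta_{j,0}\rk_0$ in $\mathcal K$. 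The $\delta_{k+l,0}$-terms in \emph{(1), (6), (10)} are precisely the contributions which would have been packaged by the ``missing'' series $\rK^\e_0$, $\rD^\e_0$ and which instead reduce to multiples of $\rk_0$, $\rk^\e_1$, $\rd^\e_1$.

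With this apparatus the ``affine/Heisenberg'' identities \emph{(1), (2), (4), (5), (8), (9)} come directly from \eqref{fre1} and the $\rd_1$- and $\rk_j$-parts of \eqref{eq:2.5}: substitute, collapse the double sum over $t_0$-powers, identify the surviving $t_0$-series, and turn the index prefactors into $\wpw$'s. Identity \emph{(12)} is the grading relation, since $t_0^{\e-1}\rd_0=t_0^\e\frac{\p}{\p t_0}$ multiplies the coefficient of $z^{\e-n-1}$ in $a^\e(z)$ by $n-\e+1$ and shifts its $t_0$-degree, which is the effect of $-z^\e\frac{d}{dz}$. Identities \emph{(3)}, \emph{(7)} and the $\rK$-part of \emph{(6)} are the same computation for the brackets $[\td^\e_{\bullet,\bullet},\,t_1^n\ot x]$, $[\td^\e_{\bullet,\bullet},\rk_1]$ and $[\td^\e_{\bullet,\bullet},\rk_j]$, which by \eqref{eq:2.5} carry no $\mu$-term.

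The real work is in \emph{(10), (11)} (and the $\delta_{k+l,0}$-term of \emph{(6)}), where three contributions to each coefficient of $[\rD^\e_k(z),-]$ must be tracked: the Witt-type term from $[\td^\e,\td^\e]$ (respectively $[\td^\e,\rk_j]$, $[\td^\e,\rd_1]$), the $\mu$-cocycle in \eqref{eq:2.5}, and the $\mu(1-\e)\bigl(n+\tfrac12(1-\e)\bigr)m^2\rk_{n,m}$ correction hidden inside $\rd^\e_{n,m}$. For \emph{(11)} the relevant index polynomial is quadratic, and one checks that the correction is exactly what turns the combined cocycle into $n^{(2)}_\e\,\rk_{\bullet,l}$, producing $\mu l^3\rK^\e_l(w)(\wpw)^2\zdwz$. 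In \emph{(10)} the index polynomial has degree three; after substitution and reindexing to the total $t_0$-degree it appears in the form $\sum_r\binom{p}{r}\alpha^{p-r}a^{(p-r)}_\e(-\beta)^rb^{(r)}_\e$ with $p=3$, and Lemma \ref{tech} rewrites it as $\sum_s\binom{p}{s}(\alpha+\beta)^{p-s}a^{(p-s)}_\e\beta^s\bigl(-a-b-(p-1)(\e-1)\bigr)^{(s)}_\e$, which via $(\wpw)^rw^a=a^{(r)}_\e w^{a+r(\e-1)}$ is exactly the cubic term $\mu\sum_{r=0}^3\binom{3}{r}\bigl((k\wpw)^r\rK^\e_{k+l}(w)\bigr)\bigl((k+l)\wpw\bigr)^{3-r}\zdwz$. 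I expect the genuine obstacle to be the bookkeeping in \emph{(10)}: one must verify that the $\delta_{k+l,0}$-terms coming from the $i=j=0$ and $i=j=1$ parts of the $\mu$-cocycle, together with those from the $\rd^\e_{n,m}$-correction, combine to give exactly $-k\delta_{k+l,0}\bigl((\wpw)^2\rd^\e_1(w)\bigr)\zdwz+\mu k^3\delta_{k+l,0}\bigl((\wpw)^2\rk^\e_1(w)\bigr)\zdwz$ with nothing left over; it is this requirement that pins down the coefficient $\mu(1-\e)\bigl(n+\tfrac12(1-\e)\bigr)m^2$ in the definition of $\rd^\e_{n,m}$, and it is safest to check it by carrying the full expansion of \eqref{eq:2.5} through Lemma \ref{tech}.
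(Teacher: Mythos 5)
Your proposal is correct and follows essentially the same route as the paper: direct evaluation of the coefficient brackets from \eqref{fre1} and \eqref{eq:2.5}, repackaging via the formal calculus of $\zdwz$ and $\wpw$, and, for the crucial identity (10), tracking the three contributions (Witt-type term, $\mu$-cocycle, and the $\rk_{n,m}$-correction inside $\rd^\e_{n,m}$) and invoking Lemma \ref{tech} with $p=3$ to recast the cubic index polynomial as the displayed sum $\sum_{r=0}^{3}\binom{3}{r}\bigl((k\wpw)^{r}\rK^\e_{k+l}(w)\bigr)\bigl((k+l)\wpw\bigr)^{3-r}\zdwz$. The paper writes out only (10) and declares the rest straightforward, which your roadmap for the remaining eleven identities correctly fills in.
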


\begin{proof} We will only verify relation (10), which is the most complicated one. The verification of other relations are straightforward. Let $i,j,k,l\in \Z$.
Firstly, from \eqref{eq:2.5} we have
\begin{align*}
 & [\td_{i,k}^{\e},\td_{j,l}^{\e}]\\
=\ & [(i-\e+1)t_{0}^{i}t_{1}^{k}\rd_{1}-kt_{0}^{i}t_{1}^{k}\rd_{0},(j-\e+1)t_{0}^{j}t_{1}^{l}\rd_{1}-lt_{0}^{j}t_{1}^{l}\rd_{0}]\\
=\ & ((i-\e+1)l-k(j-\e+1))(1-\delta_{k+l,0})\td_{i+j,k+l}^{\e}-k\delta_{k+l,0}(i+j-\e+1)(i+j-2\e+2)t_{0}^{i+j}\rd_{1}\\
 & +\mu((i-\e+1)l-jk)(il-(j-\e+1)k)\ensuremath{(il-jk)\rk_{i+j,k+l}+\delta_{k+l,0}(kt_{0}^{i+j}\rk_{1}+i\delta_{i+j,0}\rk_{0})}\\
=\ & ((i-\e+1)l-k(j-\e+1))(1-\delta_{k+l,0})\ensuremath{\rd_{i+j,k+l}^{\e}-\mu(1-\e)(i+j+\frac{1}{2}(1-\e))(k+l)^{2}\rk_{i+j,k+l}}\\
 &  -k\delta_{k+l,0}(i+j-\e+1)(i+j-2\e+2)t_{0}^{i+j}\rd_{1}\\
 &  +\mu\delta_{k+l,0}((i+j-\e+1)^{2}k^{3}t_{0}^{i+j}\rk_{1}+(1-\e)^{2}k^{2}i\delta_{i+j,0}\rk_{0})\\
 &  +\mu((i-\e+1)l-jk)(il-(j-\e+1)k)(il-jk)(1-\delta_{k+l,0})\rk_{i+j,k+l}\\
=\ & ((i-\e+1)l-k(j-\e+1))(1-\delta_{k+l,0})\rd_{i+j,k+l}^{\e}-k\delta_{k+l,0}(i+j-\e+1)(i+j-2\e+2)t_{0}^{i+j}\rd_{1}\\
  & +\mu(1-\delta_{k+l,0})C_{k,l}^{i,j}\rk_{i+j,k+l}+\mu\delta_{k+l,0}((i+j-\e+1)^{2}k^{3}t_{0}^{i+j}\rk_{1}+(1-\e)^{2}k^{2}i\delta_{i+j,0}\rk_{0}),
\end{align*}
where $C_{k,l}^{i,j}$  stands for the following  number
\begin{align*}
&l^3(i-\e+1)\(i^2-(1-\e)(i+j+\frac{1}{2}(1-\e))\)\\
&-k^3(j-\e+1)\(j^2-(1-\e)(i+j+\frac{1}{2}(1-\e))\)\\
&-kl^2\(3(i-\e+1)i(j-\e+1)-(j+\frac{1}{2}(1-\e))(j+\e-1)\)\\
&+k^2l\(3(i-\e+1)i(j-\e+1)-(i+\frac{1}{2}(1-\e))(i+\e-1)\).
\end{align*}

Next, using  the fact
\[[\tilde\rd^\e_{i,k},\rk_{j,l}]=((i-\e+1)l-k(j+\e-1))\rk_{i+j,k+l},\] we obtain
\begin{align*}
&[\td_{i,k}^\e,\mu(1-\e)(j+\frac{1}{2}(1-\e))l^2\rk_{j,l}]
-[\td_{j,l}^\e,\mu(1-\e)(i+\frac{1}{2}(1-\e))k^2\rk_{i,k}]\\
=\ &\mu(1-\e)(1-\delta_{k+l,0})\bigg(((i-\e+1)l-k(j+\e-1))(j+\frac{1}{2}(1-\e))l^2\\
&-((j-\e+1)k-(i+\e-1)l)(i+\frac{1}{2}(1-\e))k^2\bigg)\rk_{i+j,k+l}\\
&+\mu(1-\e)\delta_{k+l,0}\bigg((j+\frac{1}{2}(1-\e))l^2((i-\e+1)\delta_{i+j,0}\rk_0+kt_0^{i+j}\rk_1)\\
&-(i+\frac{1}{2}(1-\e))k^2((j-\e+1)\delta_{i+j,0}\rk_0+lt_0^{i+j}\rk_1)\bigg)\\
=\ &\mu(1-\e)(1-\delta_{k+l,0})\bigg(((i-\e+1)l-k(j+\e-1))(j+\frac{1}{2}(1-\e))l^2\\
&-((j-\e+1)k-(i+\e-1)l)(i+\frac{1}{2}(1-\e))k^2\bigg)\rk_{i+j,k+l}\\
&+\mu(1-\e)\delta_{k+l,0}(k^3(i+j-\e+1)t_0^{i+j}\rk_1-k^2i(1-\e)\delta_{i+j,0}\rk_0).
\end{align*}

By  Lemma \ref{tech} with $p=3$, we have
\begin{align*}
&\sum_{r=0}^3\binom{3}{r}l^{3-r}(i-\e+1)_\e^{(3-r)}(-k)^r (j-\e+1)^{(r)}_\e\\
=\,&\sum_{s=0}^3\binom{3}{s}(k+l)^{3-s}(i-\e+1)_\e^{(3-s)}k^s (-i-j)^{(s)}_\e.
\end{align*}
Now by summarizing the above results, we find
\begin{align*}
 & [\rd_{i,k}^{\e},\rd_{j,l}^{\e}]\\
=\ & [\td_{i,k}^{\e}+\mu(1-\e)\left(i+\frac{1}{2}(1-\e)\right)k^{2}\rk_{i,k},\td_{j,l}^{\e}+\mu(1-\e)\left(j+\frac{1}{2}(1-\e)\right)l^{2}\rk_{j,l}]\\
=\ & (1-\delta_{k+l,0})((i-\e+1)l-k(j-\e+1))\rd_{i+j,k+l}^{\e}\\
 & +\delta_{k+l,0}(i+j-\e+1)(i+j-2\e+2)\left(-kt_{0}^{i+j}\rd_{1}+\mu k^{3}t_{0}^{i+j}\rk_{1}\right)\\
 & +\mu\left(1-\delta_{k+l,0}\right)\sum_{r=0}^{3}\binom{3}{r}l^{3-r}(i-\e+1)_{\e}^{(3-r)}(-k)^{r}(j-\e+1)_{\e}^{(r)}\rk_{i+j,k+l}\\
=\ & (1-\delta_{k+l,0})((i-\e+1)l-k(j-\e+1))\rd_{i+j,k+l}^{\e}\\
 & +\delta_{k+l,0}(i+j-\e+1)(i+j-2\e+2)\left(-kt_{0}^{i+j}\rd_{1}+\mu k^{3}t_{0}^{i+j}\rk_{1}\right)\\
 & +\mu(1-\delta_{k+l,0})\sum_{s=0}^{3}\binom{3}{s}(k+l)^{3-s}(i-\e+1)_{\e}^{(3-s)}k^{s}(-i-j)_{\e}^{(s)}\rk_{i+j,k+l}.
\end{align*}

According to the above commutator relation and the fact that
$$(i-\e+1)l-k(j-\e+1)=k(2\e-i-j-2)+(i-\e+1)(k+l),$$
we have
\begin{align*}
 & [\rD_{k}^{\e}(z),\rD_{l}^{\e}(w)]\\
=\ & \sum_{i,j\in\Z}[\rd_{i,k}^{\e},\rd_{j,l}^{\e}]z^{2\e-i-2}w^{2\e-j-2}\\
=\ & k(1-\delta_{k+l,0})\sum_{i,j\in\Z}\ensuremath{(2\e-i-j-2)\rd_{i+j,k+l}^{\e}w^{3\e-i-j-3}}z^{2\e-i-2}w^{i-\e+1}\\
 & +(k+l)\sum_{i,j\in\Z}\rd_{i+j,k+l}^{\e}w^{2\e-i-j-2}\ensuremath{(i-\e+1)z^{-i+2\e-2}w^{i}}\\
 & -k\delta_{k+l,0}\sum_{i,j\in\Z}\ensuremath{(\e-i-j-1)_{\e}^{(2)}t_{0}^{i+j}\rd_{1}w^{3\e-i-j-3}}z^{2\e-i-2}w^{i-\e+1}\\
 & +\mu(1-\delta_{k+l,0})\sum_{i,j\in\Z}\sum_{r=0}^{3}\binom{3}{r}\ensuremath{k^{r}(-i-j)_{\e}^{(r)}\rk_{i+j,k+l}w^{-i-j+r(\e-1)}}\\
 & \cdot\ensuremath{(k+l)^{3-r}(i-\e+1)_{\e}^{(3-r)}z^{2\e-i-2}w^{i+(2-r)(\e-1)}}\\
 & +\mu k^{3}\delta_{k+l,0}\sum_{i,j\in\Z}\ensuremath{(\e-i-j-1)_{\e}^{(2)}t_{0}^{i+j}\rk_{1}w^{3\e-i-j-3}}z^{2\e-i-2}w^{i-\e+1}\\
=\ & k\ensuremath{\wpw\rD_{k+l}^{\e}(w)}\zdwz+(k+l)\rD_{k+l}^{\e}(w)\wpw\zdwz\\
 & -k\delta_{k+l,0}\ensuremath{\ensuremath{\wpw}^{2}\rd_{1}^{\e}(w)}\zdwz\\
 & +\mu\sum_{r=0}^{3}{3 \choose r}\ensuremath{\ensuremath{k\wpw}^{r}\rK_{k+l}^{\e}(w)}\ensuremath{(k+l)\wpw}^{3-r}\zdwz\\
 & +\mu k^{3}\delta_{k+l,0}\ensuremath{\ensuremath{\wpw}^{2}\rk_{1}^{\e}(w)}\zdwz,
\end{align*}
proving (10).
\end{proof}

\section{Realization of irreducible highest weight $\wt{\ft}(\fg)^\e$-modules}\label{sec3}
In this section, as the main result of the paper, we present an explicit realization of certain irreducible highest weight
$\wt\ft(\fg)^\e$-modules.
\subsection{Highest weight modules for affine-Virasoro algebras}
Let $\fb$ be a  Lie algebra  equipped with an invariant
symmetric bilinear form $\<\cdot,\cdot\>$. We denote by
\begin{align*}
\wh\fb\rtimes \mv=\left(\mathrm{Der}\, \C[t,t^{-1}]\right)\ltimes  \left(\C[t,t^{-1}]\ot \fb\right)\oplus \C \rk\oplus \C\rk_{\mv},
\end{align*}
 the  {\em affine-Virasoro algebra} associated to  $\fb$, where $\rk$ and $\rk_{\mv}$ are central elements  and
\begin{equation}\begin{split}\label{eq:relaaffvir}
[L(m),L(n)]&=(m-n)L(m+n)+\frac{m^3-m}{12}\delta_{m+n,0}\rk_{\mv},\\
[u(m),v(n)]&=[u,v](m+n)+m\delta_{m+n,0}\<u,v\>\rk,\\
[L(m),u(n)]&=-nu(m+n)
\end{split}\end{equation}
for $u,v\in \fb$, $m,n\in \Z$ and
$$L(m):=-t^{m+1}\frac{d}{d t}, \quad u(m):=t^m\ot u\quad \text{ for }m\in \Z,\  u\in \fb.$$
Note that $\wh\fb\rtimes \mv$ contains the Virasoro algebra
\[\mv=\mathrm{Der}\, \C[t,t^{-1}]+\C\rk_{\mv}\]
and the affine algebras
\[\wh\fb=\(\C[t,t^{-1}]\ot \fb\)\oplus \C\rk,\quad \wt\fb=\wh\fb\oplus \C t\frac{d}{dt}
\]
as subalgebras.
For any $\epsilon\in \Z$, set
\begin{align}
\label{eq:affvirgene1} a^\e(z)&=\sum_{n\in\Z} \(a\ot t^n\) z^{\e-n-1}, \quad\forall a\in\fb,\\
\label{eq:affvirgene2} L^\e(z)&=\sum_{n\in \Z} L(n) z^{2\e-n-2}+\frac{\e^2-2\e}{24} z^{2(\e-1)}\rk_{\mv}.
\end{align}
When $\epsilon=0$, we also write $a(z)=a^0(z)$ and $L(z)=L^0(z)$.
 To emphasize  the dependence on $\fb$,  sometimes we will write $L_\fb(n)$, $L^\epsilon_\fb(z)$ and $L_\fb(z)$, instead of $L(n),L^\epsilon(z)$ and $L(z)$, respectively.

We fix a $\Z$-gradation on  $\wh\fb\rtimes \mv$ with respect to the adjoint action of $L(0)$:
\[\wh\fb\rtimes \mv=\oplus_{n\in \Z} (\wh\fb\rtimes \mv)_{(n)}.\]
This $\Z$-gradation  naturally  induces
a triangular decomposition  of $\wh\fb\rtimes \mv$:
\[\wh\fb\rtimes \mv=(\wh\fb\rtimes \mv)_{(+)}\oplus(\wh\fb\rtimes \mv)_{(0)}\oplus (\wh\fb\rtimes \mv)_{(-)},\]
where $(\wh\fb\rtimes\mv)_{(0)}=\fb\oplus\C L(0)\oplus\C\rk\oplus\C\rk_{\mv},$
and
\begin{gather*}
(\wh\fb\rtimes\mv)_{(\pm)}=\oplus_{n>0}(\wh\fb\rtimes\mv)_{(\mp n)}={\rm Span}\{u(\pm n),\ L(\pm n)\mid u\in\fb,\ n>0\}.
\end{gather*}
Let $U$ be a $\fb$-module and $\beta,\ell,c\in \C$.
View $U$ as a $(\wh\fb\rtimes \mv)_{(0)}$-module with $L(0)$, $\rk$ and $\rk_{\mv}$ act respectively as the scalars $\beta$, $\ell$ and $c$, and
extend it to a
$((\wh\fb\rtimes \mv)_{(+)} +(\wh\fb\rtimes \mv)_{(0)}) $-module
by  letting  $(\wh\fb\rtimes \mv)_{(+)}$ acts trivially.
Then we have the induced $\wh\fb\rtimes \mv$-module:
\begin{align}
V_{\wh\fb\rtimes \mv}(\ell,c,U,\beta)=\U(\wh\fb\rtimes \mv)
\ot_{\U\((\wh\fb\rtimes \mv)_{(+)} +(\wh\fb\rtimes \mv)_{(0)}  \)} U.
\end{align}
When the $\fb$-module $U$ is irreducible, $V_{\wh\fb\rtimes \mv}(\ell,c,U,\beta)$ has a unique irreducible quotient, called  $L_{\wh\fb\rtimes \mv}(\ell,c,U,\beta)$.
Note that the $\Z$-grading on $\wh\fb\rtimes \mv$ affords  naturally  $\N$-grading structures on $V_{\wh\fb\rtimes \mv}(\ell,c,U,\beta)$ and
$L_{\wh\fb\rtimes \mv}(\ell,c,U,\beta)$ with  $\deg U=0$.

Similarly, we have a $\Z$-grading $\wh\fb=\bigoplus \wh\fb_{(n)}$ and a triangular decompositions $\wh\fb=\wh\fb_{(+)}\oplus
\wh\fb_{(0)}\oplus \wh\fb_{(-)}$  of the affine Lie algebra $\wh\fb$, where  $\fb_{(n)}=t^{-n}\ot \fb+\delta_{n,0}\C\rk$ and $\fb_{(\pm)}=t^{\pm 1}\C[t^{\pm 1}]\ot \fb$.
Extend the $\fb$-module $U$ to  a $(\wh\fb_{(+)}\oplus \wh\fb_{(0)})$-module such that $\wh\fb_{(+)}$ acts trivially and $\rk$ acts as the scalar $\ell$.
Then we have the Verma type highest weight $\wh\fb$-module
\begin{align}\label{vblU}
V_{\wh\fb}(\ell,U)=\U(\wh\fb)\ot_{\U(\wh\fb_{(+)}+\wh\fb_{(0)})}U
\end{align}
and its irreducible quotient $L_{\wh\fb}(\ell,U)$ provided that $U$ is irreducible.
Note that both $V_{\wh\fb}(\ell,U)$ and $L_{\wh\fb}(\ell,U)$ are naturally $\N$-graded  with $U$ as the degree $0$ subspace.

We also have a triangular decomposition $\mv=\mv_{(+)}\oplus
\mv_{(0)}\oplus \mv_{(-)}$ of the Virasoro algebra $\mv$,
where $\mv_{(\pm)}=\oplus_{m>0}\C L(\pm m)$
and $\mv_{(0)}=\C L(0)+\C\rk_{\mv}$. View $\C$ as a $(\mv_{(+)}\oplus \mv_{(0)})$-module such that $\mv_{(+)}$ acts trivially, $L(0)$ acts as the scalar $\beta$ and $\rk_{\mv}$ acts
as the scalar $c$.
Then we have the Verma type highest weight module
\begin{align}
V_{\mv}(c,\beta)=\U(\mv)\ot_{\U(\mv_{(+)}+\mv_{(0)})}\C
\end{align}
and its irreducible quotient $L_{\mv}(c,\beta)$.

\begin{remt}
\emph{Assume that $\fb$ is  finite-dimensional simple, $\<\cdot,\cdot\>$ is the normalized form, and $U$ is finite dimensional.
By the Sugawara construction (see \cite{Kac1}), when $\ell$ is not the negative dual Coxeter number, the irreducible $\wh\fb\rtimes \mv$-module
$L_{\wh\fb\rtimes \mv}(\ell,c,U,\beta)$ can be realized as the tensor product of the $\wh\fb$-module
$L_{\wh\fb}(\ell,U)$ and  the $\mv$-module $L_{\mv}(c',\beta')$ for some suitable $c',\beta'\in \C$.	}
\end{remt}

\subsection{The main construction}

In this subsection we construct a class of irreducible
$\wt{\ft}(\fg)^\e$-modules.

Let $\fh=\C{\bf k}+\C{\bf d}$ be a vector space equipped with a designated basis $\{ {\bf k}, {\bf d}\}$ and
a symmetric bilinear form $\<\cdot,\cdot\>$ determined by
\begin{align}\label{formonh}
\<\bfk,\bfk\>=0=\<\bfd,\bfd\>,\quad \quad \<\bfk,\bfd\>=1.
\end{align}
Set $L=\Z\bfk\subset \fh$ and $\fh_{\bfk}=\C\ot_\Z L=\C\bfk$.
Denote by
\[\C[L]=\oplus_{\gamma\in L} \C e^\gamma\quad\text{and}\quad \C[\fh_{\bfk}]=\oplus_{h\in \fh_{\bfk}}\C e^{h}\]
the group algebras of $L$ and $\fh_\bfk$,
respectively.
We endow $\C[\fh_{\bfk}]$ with an
  $\fh$-module structure:  $h e^{h'}=\<h,h'\> e^{h'}$ for $h\in \fh, h'\in \fh_\bfk$.
  Note that for each $\alpha\in \C$, $e^{\alpha\bfk}\C[L]$ is an $\fh$-submodule of $\C[\fh_{\bfk}]$.
  Then for any $\ell\in \C^\times$, we have
a Verma type highest weight $\wh\fh$-module $V_{\wh\fh}(\ell,e^{\alpha\bfk}\C[L])$ (see \eqref{vblU}).
Note that we have the following vector spaces identification:
\[V_{\wh\fh}\left(\ell,e^{\alpha\bfk}\C[L]\right)=V_{\wh\fh}(\ell,0)\ot e^{\alpha\bfk}\C[L].\]

It is known (see \cite{LL}) that the highest weight $\wh\fh$-module $V_{\wh\fh}(\ell,e^{\alpha\bfk}\C[L])$ can be extended to an $\wh\fh\rtimes \mv$-module with
$\rk_{\mv}=2$ and
\begin{align}\label{Lfhz}L_\fh(z)=\frac{1}{\ell} \ _{\circ}^{\circ}\bfk(z)\bfd(z) _{\circ}^{\circ}.\end{align}
Let $W$ be a vector space and $u, v\in W$. For any two formal series $u(z)=\sum_{n\in\Z}u_nz^{-n-1}$ and $v(z)=\sum_{n\in\Z}v_nz^{-n-1}$ in $W[[z,z^{-1}]]$,
we denote by
\begin{align*}
 _{\circ}^{\circ}u(z_1)v(z_2) _{\circ}^{\circ}\ =u^+(z_1)v(z_2)+v(z_2)u^-(z_1)
\end{align*}	
the normally ordered product of $u(z_1)$ and $v(z_2)$, where
$$u^{+}(z)=\sum_{n\textless 0}u_nz^{-n-1}\quad \te{and}\quad u^-(z)=\sum_{n\geqs 0}u_nz^{-n-1}.$$

Form the direct sum
\[\ff=\fg\oplus \fh\]
of the Lie algebras $\fg$ and $\fh$.
Let $U$ be an irreducible $\fg$-module  and $\alpha,\beta,\ell\in \C$ with $\ell\ne 0$.
We define an $\wh\ff\rtimes \mv$-module structure on the tensor space
\[ L_{\wh\fg\rtimes\mv}(\ell,24\mu\ell-2,U,\beta)\ot V_{\wh\fh}(\ell,e^{\alpha\bfk}\C[L])\]
by letting $\rk=\ell$, $\rk_\mv=24\mu\ell$ and
\begin{align}\label{shortconvention1}
u(z)=u(z)\otimes 1,\quad h(z)=1\ot h(z),\quad L_\ff(z)=L_\fg(z)\otimes 1+1\otimes L_\fh(z),
\end{align}
for $u\in \fg$ and $h\in \fh$.

Let $\C[L]$ act on $V_{\wh\fh}(\ell,e^{\alpha\bfk}\C[L])$ by
\[e^\gamma (v\ot e^{\alpha\bfk+\gamma'})=
v\ot e^{\alpha\bfk+\gamma'+\gamma},\quad \forall \text{$\gamma,\gamma'\in L$, $v\in V_{\wh\fh}(\ell,0)$}.\]
For $\gamma\in L$,  introduce the following  operator in $\mathrm{End}\left(V_{\wh\fh}(\ell,e^{\alpha\bfk}\C[L])\right)[[z,z^{-1}]]$:
\begin{align}\label{Egammaz}
E^{\gamma}(z)=\te{exp}\(\frac{1}{\ell}\sum_{m=1}^{\infty} \frac{\gamma(-m)}{m}z^{m} \)
\te{exp}\(-\frac{1}{\ell}\sum_{m=1}^{\infty} \frac{\gamma(m)}{m}z^{-m} \)e^{\gamma}.
\end{align}
For convenience, we also set
\begin{align}\label{shortconvention2}
E^\gamma(z)=1\ot E^\gamma(z)\in \mathrm{End}\left( L_{\wh\fg\rtimes\mv}(\ell,24\mu\ell-2,U,\beta)\ot V_{\wh\fh}(\ell,e^{\alpha\bfk}\C[L])\right)[[z,z^{-1}]].\end{align}

The following theorem is the first main result of this paper. We will prove this theorem  in Section 5.
\begin{thmt}\label{mainth}
Let $\ell,\alpha,\beta\in\C$ with $\ell\neq 0$ and let $U$ be an irreducible $\fg$-module.
Then there is an irreducible $\wt\ft(\fg)^\e$-module structure on the $\wh\ff\rtimes \mv$-module
\[L_{\wh\fg\rtimes \mv}(\ell,24\mu\ell-2,U,\beta)\ot V_{\wh\fh}(\ell,e^{\alpha\bfk}\C[L]),\]
where $\rk_0=\ell$, $t_0^{\e-1}\rd_0=-L_\ff(\e-1)
+\delta_{\e,1}\mu\ell$, and
\begin{equation}\begin{split}\label{eq:mainaction}
&(t_1^m\ot u)^\e(z)=u^\e(z) E^{m\bfk}(z),\
\rk_1^\e(z)=\bfk^\e(z),\  \rd_1^\e(z)=\bfd^\e(z),\
\rK^\e_n(z)=\frac{\ell}{n} E^{n\bfk}(z) ,\\
&\rD^\e_n(z)=n _{\circ}^{\circ}L^\e_\ff(z)E^{n\bfk}(z)_{\circ}^{\circ}+\frac{1}{2}n\e (\e-1) z^{2\e-2} E^{n\bfk}(z)
-z^\e \frac{d}{dz} \ _{\circ}^{\circ} \bfd^\e(z) E^{n\bfk}(z) _{\circ}^{\circ}\\&\qquad\qquad +n^2 (\mu-\frac{1}{\ell})\(z^\e\frac{d}{dz}\bfk^\e(z)\)E^{n\bfk}(z)
\end{split}\end{equation}
for $u\in \fg,\ m\in \Z$ and $n\in \Z^\times$.
\end{thmt}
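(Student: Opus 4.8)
The plan is to realize the $\wt\ft(\fg)^\e$-module structure on the stated tensor space by transporting it through the chain of equivalences sketched in the introduction, and then to make the action explicit using the $u_{-1}v$-formula \eqref{intro:u-1vformula}. Concretely, I would proceed in four stages.

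\textbf{Stage 1: Assemble the $\phi_\e$-coordinated module on the vertex algebra side.} First I would invoke Proposition~\ref{cocor} (the second correspondence) to put an irreducible $\phi_\e$-coordinated $V_{\wh\fg\rtimes\mv}(\ell,24\mu\ell-2)$-module structure on $L_{\wh\fg\rtimes\mv}(\ell,24\mu\ell-2,U,\beta)$, and Proposition~\ref{vhaisvhlmod} (the third correspondence, via $\wh\fp$ and the lattice vertex algebra $V_{(\fh,L)}$) to put an irreducible $\phi_\e$-coordinated $V_{(\fh,L)}$-module structure on $V_{\wh\fh}(\ell,e^{\alpha\bfk}\C[L])$. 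Tensoring gives an irreducible $\phi_\e$-coordinated $V_{\wh\fg\rtimes\mv}(\ell,24\mu\ell-2)\ot V_{(\fh,L)}$-module on the space in question. Pulling back along the surjective vertex algebra homomorphism $\Theta$ yields an irreducible $\phi_\e$-coordinated $V_{\wh\ft(\fg)^0}(\gamma_\ell)$-module structure; here I must check that irreducibility is preserved under pullback along a surjection, which is immediate since submodules correspond.

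\textbf{Stage 2: Convert to a $\wh\ft(\fg)^\e$-module and extend to $\wt\ft(\fg)^\e$.} Applying the first correspondence of Section~5.1 (restricted $\wh\ft(\fg)^\e$-modules of level $\ell$ $\leftrightarrow$ $\phi_\e$-coordinated $V_{\wh\ft(\fg)^0}(\gamma_\ell)$-modules), the space $L_{\wh\fg\rtimes\mv}(\ell,24\mu\ell-2,U,\beta)\ot V_{\wh\fh}(\ell,e^{\alpha\bfk}\C[L])$ acquires an irreducible $\wh\ft(\fg)^\e$-module structure. Since $\wt\ft(\fg)^\e=\wh\ft(\fg)^\e\oplus\C t_0^{\e-1}\rd_1$, I then extend this to a $\wt\ft(\fg)^\e$-module via Theorem~\ref{thm3}, defining the action of $t_0^{\e-1}\rd_0$ by the stated formula $-L_\ff(\e-1)+\delta_{\e,1}\mu\ell$ (the grading operator shift) and checking compatibility with relation (12) of Proposition~\ref{prop:hgcomm}; irreducibility as a $\wt\ft(\fg)^\e$-module follows from irreducibility as a $\wh\ft(\fg)^\e$-module. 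Setting $\rk_0=\ell$ is forced by the level.

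\textbf{Stage 3: Compute the module action explicitly.} This is where formula \eqref{intro:u-1vformula} does the real work. The vertex operators $Y^\e$ on the lattice vertex algebra $V_{(\fh,L)}$ are, on the usual-module side, built from the Heisenberg fields $\bfk(z),\bfd(z)$ and the lattice exponentials; transporting to the $\phi_\e$-side replaces $z$-expansions by $\phi_\e$-expansions but, crucially, $E^{m\bfk}(z)$ in \eqref{Egammaz} is expressed through $\bfk(\pm m)$ only (no $\e$-dependence in its definition). I would: (i) identify the images under $\Theta$ of the generators $t_1^m\ot u$, $\rk_1$, $\rd_1$, $\rK_n$, $\rD_n$ of $\wh\ft(\fg)^\e$ — these come from the construction of $\Theta$ in \cite{CLiT,B1}, with $\rD_n$ corresponding to a Virasoro-type element that is a Sugawara-plus-Heisenberg combination, hence an $u_{-1}v$ (or sum of such) in the vertex algebra; (ii) apply \eqref{intro:u-1vformula} to each such $u_{-1}v$, so that $Y^\e$ of a product becomes a normally ordered product $\no{Y^\e(u,z)Y^\e(v,z)}$ plus the correction sum $\sum_{n\ge0}c_n z^{(\e-1)(n+1)}Y^\e(u_nv,z)$; (iii) read off that the normally ordered terms produce exactly $u^\e(z)E^{m\bfk}(z)$, $\no{L^\e_\ff(z)E^{n\bfk}(z)}$, $\no{\bfd^\e(z)E^{n\bfk}(z)}$, etc., while the correction terms contribute the lower-order pieces $\tfrac12 n\e(\e-1)z^{2\e-2}E^{n\bfk}(z)$ and $n^2(\mu-\tfrac1\ell)(z^\e\tfrac{d}{dz}\bfk^\e(z))E^{n\bfk}(z)$ in the $\rD^\e_n$ formula. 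Lemma~\ref{tech} will be needed to reorganize the binomial sums of $c_n$'s into the clean closed form. Finally I would double-check the resulting operators satisfy all twelve relations of Proposition~\ref{prop:hgcomm} — this is a consistency check rather than a derivation, since the module structure already exists abstractly.

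\textbf{Main obstacle.} The genuinely delicate part is Stage~3(ii)--(iii): keeping precise track of the constants $c_n$ and the powers $z^{(\e-1)(n+1)}$ when applying \eqref{intro:u-1vformula} to the specific $u_{-1}v$ elements representing $\rD^\e_n$, and verifying that they collapse to exactly the coefficients $\tfrac12\e(\e-1)$ and $(\mu-\tfrac1\ell)$ appearing in \eqref{eq:mainaction}. The affine-Virasoro conformal vector has the extra $\tfrac{\e^2-2\e}{24}z^{2(\e-1)}\rk_\mv$ term in \eqref{eq:affvirgene2}, and reconciling that anomaly term with the $\delta_{\e,1}\mu\ell$ shift in the $t_0^{\e-1}\rd_0$ action and with the central charge $24\mu\ell-2$ requires care; this is exactly the point where the $-2$ in $24\mu\ell-2$ and the Heisenberg central charge $\rk_\mv=2$ from \eqref{Lfhz} must cancel correctly. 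Everything else is bookkeeping built on the cited propositions.
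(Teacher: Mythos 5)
Your proposal is correct and follows essentially the same route as the paper: the three correspondences make the tensor space an irreducible $\phi_\e$-coordinated $V_{\wh\fg\rtimes \mv}(\ell,24\mu\ell-2)\ot V_{(\fh,L)}$-module, which is then pulled back along $\Theta$, converted to a restricted $\wh\ft(\fg)^\e$-module of level $\ell$, extended by $t_0^{\e-1}\rd_0=-\mathrm{Res}_z\,z^{-\e}Y_W^\e(\omega,z)$ (checked against relation (12) of Proposition \ref{prop:hgcomm}), and made explicit via the $u_{-1}v$-formula. The only small discrepancies are that the complement of $\wh\ft(\fg)^\e$ in $\wt\ft(\fg)^\e$ is $\C t_0^{\e-1}\rd_0$ rather than $\C t_0^{\e-1}\rd_1$, and that Lemma \ref{tech} is not actually needed in the explicit computation, since the correction sum in \eqref{eq:u-1v} truncates after the first terms (e.g.\ $\omega_j e^{n\bfk}=0$ for $j>0$), so only Remark \ref{rem:normalorder} is used.
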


\begin{remt} When $\epsilon=1$, the $\wt\ft(\fg)^\e$-module $L_{\wh\fg\rtimes \mv}(\ell,24\mu\ell-2,U,\beta)\ot V_{\wh\fh}(\ell,e^{\alpha\bfk}\C[L])$
was first constructed by Billig in \cite{B2} (see also \cite{CLiT}).
And, when $\epsilon=0$, this  $\wt\ft(\fg)^\e$-module was previously constructed in \cite{CLiT}.
\end{remt}

\subsection{Realization of irreducible highest weight $\wt{\ft}(\fg)^\e$-modules}

In this subsection we give a characterization of the irreducible $\wt{\ft}(\fg)^\e$-modules
constructed in Theorem \ref{mainth}.

Note that  $\wt\ft(\fg)^\e=\oplus_{n\in \Z} \wt\ft(\fg)^\e_{(n)}$ is a $\Z$-graded subalgebra of $\mathcal{T}(\fg)$, where
$\wt\ft(\fg)^\e_{(n)}=\wt\ft(\fg)^\e\cap \mathcal{T}(\fg)_{(n)}$.
Explicitly, we have
\[
\wt\ft(\fg)_{(n)}^{\e}=\te{Span}\{t_{0}^{-n}u,\,\rd_{-n,m}^{\e},\,\rk_{-n,m}\mid m\in\Z,u\in\widetilde{\mathfrak{g}}_{1}\}\oplus\mathbb{C}\delta_{n,1-\epsilon}t_{0}^{\epsilon-1}\rd_{0},
\]
if $n\ne 0$, and
\[\wt\ft(\fg)_{(0)}^{\e}={\tilde{\fg}}_{1}+\sum_{n\in\Z^{\times}}(\C\rd_{0,n}^{\e}+\C\rk_{0,n})+\C\rk_{0}+\C\delta_{\e,1}\rd_{0}.\]
This $\Z$-grading induces  a natural triangular decomposition of $\wt\ft(\fg)^\e$:
\begin{align*}
\wt\ft(\fg)^\e=\wt\ft(\fg)^\e_{(+)}\oplus \wt\ft(\fg)^\e_{(0)}\oplus \wt\ft(\fg)^\e_{(-)},
\end{align*}
where  \[
\wt\ft(\fg)_{(\pm)}^{\e}=\oplus_{n>0}\wt\ft(\fg)_{(\mp n)}^{\e}.
\]

As usual, we say that a $\wt\ft(\fg)^\e$-module $W$ is a {\em highest weight module} if there exists a vector $w\in W$  such that
$\wt\ft(\fg)^\e_{(+)}w=0$ and $\U(\wt\ft(\fg)^\e)w=W$.
Let $T$ be an irreducible $\wt\ft(\fg)^\e_{(0)}$-module.  Then we have the Verma type highest weight $\wt\ft(\fg)^\e$-module
\[V_{\wt\ft(\fg)^\e}(T)=\U\(\wt\ft(\fg)^\e\)\ot_{\U\(\wt\ft(\fg)^\e_{(+)}+\wt\ft(\fg)^\e_{(0)}\)}T,\]
where $\wt\ft(\fg)^\e_{(+)}$ acts trivially on $T$.
The $\wt\ft(\fg)^\e$-module $V_{\wt\ft(\fg)^\e}(T)$ admits an $\N$-grading structure determined by $\deg T=0$.
Denote by  $L_{\wt\ft(\fg)^\e}(T)$ the  quotient of $V_{\wt\ft(\fg)^\e}(T)$ modulo by its maximal graded submodule.
Note that $L_{\wt\ft(\fg)^\e}(T)$ is an irreducible highest weight $\wt\ft(\fg)^\e$-module, and conversely any
irreducible highest weight $\wt\ft(\fg)^\e$-module is of this form.

Let $U$ be an irreducible $\fg$-module and $\ell,a,b\in \C$.
Associated to these data,  we will construct an irreducible $\wt\ft(\fg)^\e_{(0)}$-module structure on
the loop space $\C[t,t^{-1}]\ot U$.
Assume first that $\e\ne 1$. One can check that in this case
 \[\wt\ft(\fg)^\e_{(0)}=\C[t_1,t_1^{-1}]\ot \fg +\C\rk_1+\sum_{m\in\Z}\(\C\rd_{0,m}^\e+\C t_1^m\rk_0\),\]
where $\rk_1$ is central and
\begin{align*}
&[t_1^{m}\ot u,t_1^{n}\ot v]=t_1^{m+n}\ot [u,v]+m\delta_{m+n,0}\<u,v\>\rk_1,\ [t_1^m\rk_0,t_1^n\rk_0]=0,\ [t_1^m\rk_0,t_1^n\ot u]=0,\\
&[\rd^\e_{0,m},t_1^n\rk_0]=(1-\e)nt_1^{m+n}\rk_0-m^2\delta_{m+n,0}\rk_1,\
[\rd^\e_{0,m},t_1^{n}\ot u]=(1-\e)nt_1^{m+n}\ot u,\\
&[\rd^\e_{0,m},\rd^\e_{0,n}]=(1-\e)(n-m)\rd^\e_{0,m+n}+2\mu m^3 (\e-1)^2 \delta_{m+n,0} \rk_1
\end{align*}
for $m,n,k,l\in\Z$ and $u,v\in \fg$.
This implies that the assignment ($m\in \Z, u\in \fg$)
\begin{align}\label{homtg0}
\rk_1\mapsto 0,\quad \rd^\e_{0,m}\mapsto (1-\e)\,t^{m+1}\frac{d}{d t},\quad t_1^{m}\ot u\mapsto t^m\ot u,\quad t_1^m\rk_0\mapsto t^m\ot \rk_0
\end{align}
determines  a surjective homomorphism from $\wt\ft(\fg)^\e_{(0)}$ to the centerless affine-Virasoro algebra
\[\mathcal{W}_\fg:=(\mathrm{Der}\, \C[t,t^{-1}])\ltimes  \(\C[t,t^{-1}]\ot (\fg+\C\rk_0)\)\]
 associated to $\fg\oplus \C\rk_0$.
Extend $U$ to  a $(\fg\oplus \C\rk_0)$-module such that $\rk_0$ acts as a scalar $\ell$.
Following \cite{B-jet},  there is an irreducible $\mathcal{W}_\fg$-module structure on the loop space $\C[t,t^{-1}]\ot U$ with
\[(t^m\ot v)(t^n\ot u)=t^{m+n}\ot (vu),\quad \(t^{m+1}\frac{d}{dt}\) (t^n\ot u)= (n+a+b m) t^{m+n}\ot u\]
for $x\in \fg\oplus \C\rk_0$, $m,n\in \Z$ and $u\in U$.
Via the homomorphism \eqref{homtg0}, $\C[t,t^{-1}]\ot U$ becomes a $\wt\ft(\fg)^\e_{(0)}$-module,
which we denote by $T_{U,\ell,a,b}$.

When  $\e=1$ we have
\[\wt\ft(\fg)^1_{(0)}=\C[t_1,t_1^{-1}]\ot \fg +\sum_{m\in\Z}\(\C t_1^m\rd_0+\C t_1^m\rk_0\)+\C\rk_1+\C\rd_1,\]
which is isomorphic to the affine Kac-Moody algebra $\wt\ff$ associated to  $\ff$ with
\[t_1^m\ot u\mapsto u(m),\ t_1^m\rk_0\mapsto \bfk(m),\ t_1^m\rd_0\mapsto \bfd(m),\ \rk_1\mapsto \rk,\
\rd_1\mapsto t\frac{d}{dt}\]
for $m\in \Z, u\in \fg$.
Extend $U$ to an $\ff$-module such that $\bfd$ and $\bfk$ act as the scalars $b$ and $\ell$,
respectively. Following \cite{CP}, we have the loop module $\C[t,t^{-1}]\ot U$ for the affine Lie algebra $\wt\ff$ with the actions given by
\begin{align*}
x(m)(t^n\ot u)=t^{m+n}\ot (xu),\quad t\frac{d}{dt}(t^n\ot u)=(n+a)t^n \ot u,\quad \rk=0
\end{align*}
for $x\in \ff$, $m,n\in \Z$ and $u\in U$.
 Since $\wt\ft(\fg)_{(0)}^{1}$
is isomorphism to $\wt\ff$, the loop $\wt\ff$-module $\C[t,t^{-1}]\ot U$
becomes a $\wt\ft(\fg)_{(0)}^{1}$-module, which is also denoted by
$T_{U,\ell,a,b}$.

Now, as the second main result of this paper, we give a characterization of the irreducible $\wt\ft(\fg)^\e$-modules constructed in Theorem \ref{mainth}.

\begin{thm}\label{mainth2}
Let $\ell,\alpha,\beta\in\C$ with $\ell\neq 0$ and let $U$ be an irreducible $\fg$-module.
 Then the $\wt\ft(\fg)^\e$-module
$L_{\wh\fg\rtimes \mv}(\ell,24\mu\ell-2,U,\beta)\ot V_{\wh\fh}(\ell,e^{\alpha\bfk}\C[L])$ is isomorphic to
the irreducible highest weight module $L_{\wt\ft(\fg)^\e}(T_{\ell,U,\alpha,b})$, where $b=\frac{\beta+(\e^2-2\e)\mu\ell}{1-\e}+\frac{\e}{2}$
if $\e\ne 1$ and $b=\beta-\mu\ell$ if $\e=1$.
\end{thm}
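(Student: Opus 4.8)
The plan is to identify the abstract module $L_{\wt\ft(\fg)^\e}(T_{\ell,U,\alpha,b})$ with the concretely built module $M:=L_{\wh\fg\rtimes \mv}(\ell,24\mu\ell-2,U,\beta)\ot V_{\wh\fh}(\ell,e^{\alpha\bfk}\C[L])$ by exhibiting $M$ as an irreducible highest weight module whose top is a copy of $T_{\ell,U,\alpha,b}$. First I would equip $M$ with its $\N$-grading coming from the eigenvalues of $-t_0^{\e-1}\rd_0 = L_\ff(\e-1)-\delta_{\e,1}\mu\ell$ (which is, up to the constant shift, the grading operator used to define $\wt\ft(\fg)^\e=\oplus_n\wt\ft(\fg)^\e_{(n)}$ via $\mathcal T(\fg)_{(n)}$): inspecting the formulas \eqref{eq:mainaction} one sees each generating function $a^\e(z)$ carries the correct homogeneity, so this action is compatible with the $\Z$-grading on $\wt\ft(\fg)^\e$. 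The key is then to compute the degree-$0$ piece $M_{(0)}$ as a $\wt\ft(\fg)^\e_{(0)}$-module and to check $\wt\ft(\fg)^\e_{(+)}M_{(0)}=0$ and $\U(\wt\ft(\fg)^\e)M_{(0)}=M$.

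For the first of these I would read off, from \eqref{eq:mainaction}, the degree-$0$ coefficients: $(t_1^m\ot u)^\e$ contributes $u_{\e-1}\ot E^{m\bfk}_0$-type terms, $\rK^\e_n$ contributes $\frac{\ell}{n}E^{n\bfk}_0$, and $\rD^\e_n$ contributes a degree-$0$ operator built from $L_\ff(\e-1)$, $\bfk, \bfd$, $E^{n\bfk}$. Using $V_{\wh\fh}(\ell,e^{\alpha\bfk}\C[L])=V_{\wh\fh}(\ell,0)\ot e^{\alpha\bfk}\C[L]$ and the fact that $E^{n\bfk}(z)$ shifts the lattice part by $n\bfk$ while its exponential prefactors involve only $\bfk(\pm m)$ (which act on the vacuum side in a controlled way), one checks that the degree-$0$ space is $U\ot e^{\alpha\bfk}\C[L]\cong \C[t,t^{-1}]\ot U$ under $e^{m\bfk}\leftrightarrow t^m$. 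Then the central elements act correctly ($\rk_0\mapsto \ell$, $\rk_1\mapsto 0$ since $\bfk^\e(z)$ has no degree-$0$ coefficient acting nontrivially on the top — indeed $\bfk(0)$ kills $V_{\wh\fh}(\ell,0)$ and on $e^{\alpha\bfk+\gamma}$ gives $\la\bfk,\alpha\bfk+\gamma\ra=0$), and the remaining generators reproduce exactly the defining relations of $\mathcal W_\fg$ (resp. $\wt\ff$) acting on the loop module of \cite{B-jet} (resp. \cite{CP}). The slightly delicate point here is the computation of the $\rD^\e_0$-analog: since $\rD^\e_n(z)$ has $\rD^\e_0(z)$ declared to be zero, one instead extracts the degree-$0$ coefficient of $\rd^\e_{0,m}$, i.e. of the coefficient of $E^{m\bfk}$ inside $\rD^\e_m(z)$ and similar — and one must match the scalar $b$ by carefully tracking the $z^{2\e-2}$-terms and the $L_\fh$-contribution $\frac1\ell{}_\circ^\circ\bfk(z)\bfd(z)_\circ^\circ$ against $L_\ff(\e-1)$; this is where the formula $b=\frac{\beta+(\e^2-2\e)\mu\ell}{1-\e}+\frac\e2$ (resp. $b=\beta-\mu\ell$) comes from.

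Granting that $M_{(0)}\cong T_{\ell,U,\alpha,b}$ and that $\wt\ft(\fg)^\e_{(+)}$ annihilates it, $M$ is a quotient of the Verma module $V_{\wt\ft(\fg)^\e}(T_{\ell,U,\alpha,b})$; since by Theorem \ref{mainth} $M$ is irreducible, it equals the unique irreducible quotient $L_{\wt\ft(\fg)^\e}(T_{\ell,U,\alpha,b})$, provided $T_{\ell,U,\alpha,b}$ is itself irreducible as a $\wt\ft(\fg)^\e_{(0)}$-module. That irreducibility is inherited from the cited irreducibility of the loop modules of \cite{B-jet,CP} together with the surjectivity of the homomorphisms \eqref{homtg0} (resp. the isomorphism $\wt\ft(\fg)^1_{(0)}\cong\wt\ff$), so it only remains to confirm that the intermediate kernel (e.g. $\rk_1$ and the $t_1^m\rk_0$ for $\e\neq 1$) acts by zero on $M_{(0)}$ — which I verified above. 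Finally $\U(\wt\ft(\fg)^\e)M_{(0)}=M$ is automatic from irreducibility of $M$.

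The main obstacle I expect is the bookkeeping in the degree-$0$ computation of $\rD^\e_0$: one must correctly normally order ${}_\circ^\circ L^\e_\ff(z)E^{n\bfk}(z){}_\circ^\circ$ and the $z^\e\frac{d}{dz}{}_\circ^\circ\bfd^\e(z)E^{n\bfk}(z){}_\circ^\circ$ term, extract the $z^0$-coefficient on the top space, and reconcile the resulting operator on $\C[t,t^{-1}]\ot U$ with $(1-\e)t^{m+1}\frac{d}{dt}$ acting with the shift $n+a+bm$; getting the affine-Virasoro central term $\frac{\e^2-2\e}{24}$ from \eqref{eq:affvirgene2} and the cocycle $\mu$-contributions to line up is the delicate part, and it is precisely the source of the explicit value of $b$ in the statement.
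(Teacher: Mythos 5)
Your proposal follows essentially the same route as the paper's proof: use the natural $\N$-grading on the tensor product (the paper checks compatibility with the algebra grading exactly as you do, from the homogeneity of the formulas in \eqref{eq:mainaction}), deduce from irreducibility of the whole module that the degree-zero piece $U\ot e^{\alpha\bfk}\C[L]$ is an irreducible $\wt\ft(\fg)^\e_{(0)}$-module, and identify it with $T_{\ell,U,\alpha,b}$ by explicitly computing the degree-zero actions of $\rk_1,\rd_1,t_1^m\rk_0,t_1^m\ot a$ and $\rd^\e_{0,n}$ -- the last being, as you anticipate, the only delicate step and the source of the value of $b$. One small slip worth noting: for $\e\neq 1$ the element $t_0^{\e-1}\rd_0=-L_\ff(\e-1)+\delta_{\e,1}\mu\ell$ is not degree-preserving, so its eigenvalues do not define the grading; but since your argument actually rests on the homogeneity check rather than on that identification, nothing in the proof is affected.
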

\begin{proof}  Write \[W=L_{\wh\fg\rtimes \mv}(\ell,24\mu\ell-2,U,\beta)\ot V_{\wh\fh}(\ell,e^{\alpha\bfk}\C[L]).\]
It is clear that $W=\oplus_{k\in \Z}W_{(k)}$ is a $\Z$-graded $\wh\ff\rtimes \mv$-module such that $W_{(0)}=U\ot e^{\alpha\bfk}\C[L]$ and $W_{(k)}=0$ for $k<0$.
For $n\in \Z$, set
\[E^{n\bfk}(z)=\sum_{m\in \Z}E^{n\bfk}(m)z^{-m}.\]
 Then we have $E^{n\bfk}(m)W_{(k)}\subset W_{(m+k)}$ for $n,m,k\in \Z$.
This together with \eqref{eq:mainaction} implies that $\wt\ft(\fg)^\e_{(m)} W_{(k)}=W_{(m+k)}$ for $m,k\in \Z$.
That is, $W$ is an irreducible $\N$-graded  $\wt\ft(\fg)^\e$-module. Thus, $W_{(0)}$ is an irreducible
$\wt\ft(\fg)^\e_{(0)}$-module and we need to prove that
\[W_{(0)}(=U\ot e^{\alpha\bfk}\C[L])\quad \text{is isomorphic to}\quad T_{\ell,U,\alpha,b}(=\C[t,t^{-1}]\ot U)\]
as $\wt\ft(\fg)^\e_{(0)}$-modules, where $\ell,U,\alpha,b$ are as given in the theorem.

It is straightforward to check that in this case the $\wt\ft(\fg)^\e_{(0)}$-module actions  on
$T_{\ell,U,\alpha,b}$  can be written in the following expressions:
\begin{equation}\begin{split}\label{tg0action}
&(t_1^m\ot a) (t^n\ot u)=t^{m+n}\ot a u, \quad (t_1^m\rk_0)(t^n\ot u)=\ell t^{m+n}\ot u,\\
&\rd_1(t^n\ot u)=(n+\alpha)t^n\ot u,\quad \rk_1(t^n\ot u)=0,\\
& \rd^\e_{0,k} (t^n\ot u)=\(k(\beta+(\e^2-2\e)\mu\ell)+(1-\e)(\alpha+n+\frac{1}{2}k\e)\)t^{k+n}\ot u,\\
&\rd_0(t^n\ot u)=(\mu\ell-\beta)(t^n\ot u) \quad (\te{when } \e=1),
\end{split}\end{equation}
for $m,n\in \Z,\ k\in\Z^{\times},\ a\in \fg$ and $u\in U$.

Now we are ready to  show that $\wt\ft(\fg)^\e_{(0)}$-module actions on $W_{(0)}$ coincide with that of $T_{\ell,U,\alpha,b}$ given  in \eqref{tg0action}.
For $u\in U$ and $r\in \Z$, set
\[u(r):=u\ot e^{(\alpha+r)\bfk}.\] Then we have
\begin{align}\label{tge0act1}
\rk_1(u(r))&=\bfk(0)(u(r))
=u\ot \bfk(0)e^{(\alpha+r)\bfk}=\<\bfk,(\alpha+r)\bfk\>u\ot e^{(\alpha+r)\bfk}=0,\\
\label{tge0act2}\rd_1(u(r))&=\bfd(0)(u(r))
=\<\bfd,(\alpha+r)\bfk\>u(r)=(\alpha+r)u(r).
\end{align}
When $\e=1$, we also have
\begin{align}
\rd_{0}(u(r))= & (-L(0)-\sum_{i\textless1}\bfk_{i}\bfd_{-i}-\sum_{i\geq 1}\bfd_{-i}\bfk_{i}+\mu\ell)(u(r))\label{tge0act3}\\
= & (-L(0)+\mu\ell)(u(r))=(-\beta+\mu\ell)u(r).\nonumber
\end{align}

On the other hand, for $m\in\Z$, we have
\begin{align*}
E^{m\bfk}(0)\ensuremath{u(r)}=u\ot e^{m\bfk}e^{(\alpha+r)\bfk}=u(m+r).
\end{align*}
In view of this, we have
\begin{align}
(t_{1}^{n}\rk_{0})(u(r))= & n\rk_{0,n}(u(r))=\ell E^{n\bfk}(0)(u(r))=\ell(u(n+r)),\label{tge0act4}\\
(t_{1}^{m}\otimes a)(u(r))= & \sum_{i\in\Z}a(i)u\ot E^{m\bfk}(-i)E^{(\alpha+r)\bfk}\label{tge0act5}\\
=\, & a(0)u\ot E^{m\bfk}(0)e^{(\alpha+r)\bfk}=(au)(m+r),\nonumber
\end{align}
for $n\in\Z^{\times}$, $a\in\fg$ and $m\in\Z$.

For  the action of $\rd_{0,n}^\e$ $(n\in \Z^\times)$ on $u(r)$, note
firstly that
\begin{align*}
[L_\fh(i),E^{n\bfk}(-i)]e^{(\alpha+r)\bfk}=\left(\frac{n}{\ell}\sum_{j\in\Z}\bfk(j)E^{n\bfk}(-j)\right)e^{(\alpha+r)\bfk}=0\quad (i\in\Z),
\end{align*}
as $\bfk(i)$ commutes with $E^{n\bfk}(j)$ for $i,j\in\Z$, and that
$L_\fh(i)e^{(\alpha+r)\bfk}=0$ for $i\geq 0.$
Using these and \eqref{eq:affvirgene2} we have
\begin{align*}
 &\te{Res}_zz^{1-2\e}\ _{\circ}^{\circ}L^\e_\ff(z)E^{n\bfk}(z) _{\circ}^{\circ}u(r)\\
=\ & \te{Res}_z\( \sum_{i\leq 2\e-2}L_{\ff}(i)z^{-i-1}E^{n\bfk}(z)+E^{n\bfk}(z)\sum_{i> 2\e-2}L_{\ff}(i)z^{-i-1}+\frac{\e^2-2\e}{24}z^{-1}\rk_{\mv}E^{n\bfk}(z)\)u(r)\\
=\ &\( \sum_{i\leq 2\e-2}L_{\ff}(i)E^{n\bfk}(-i)+\sum_{i> 2\e-2}E^{n\bfk}(-i)L_{\ff}(i)+(\e^2-2\e)\mu\ell(E^{n\bfk})(0)\)u(r)\\
 =\ &\sum_{i\in \Z}\(L_\fg(i)u\ot E^{n\bfk}(-i)  e^{(\alpha+r)\bfk}+u\ot L_\fh(i) E^{n\bfk}(-i)  e^{(\alpha+r)\bfk}\)+(\e^2-2\e)\mu\ell(E^{n\bfk})(0)u(r)\\
  =\ &L_\fg(0)u\ot E^{n\bfk}(0)e^{(\alpha+r)\bfk}+(\e^2-2\e)\mu\ell\,u(n+r)\\
  =\ &(\beta+(\e^2-2\e)\mu\ell)\,u(n+r).
\end{align*}

 Secondly, we have
  \begin{align*}
 &\te{Res}_zz^{1-2\e}\( z^{2\e-2} E^{n\bfk}(z)\)u(r)
=(E^{n\bfk})(0)u(r)=u(n+r).
 \end{align*}
Thirdly, from the fact that $\bfk(i)$ commutes with $E^{n\bfk}(j)$ for $i,j\in\Z$,
we have
 \begin{align*}
&\te{Res}_zz^{1-2\e}\(z^\e\frac{d}{dz}\bfk^\e(z) \)E^{n\bfk}(z)u(r)\\
=\ &\te{Res}_z\sum_{i\in\Z}(\e-i-1)\bfk(i)z^{-i-1}E^{n\bfk}(z)u(r)\\
=\ & \sum_{i\in\Z}(\e-i-1)\bfk(i) E^{n\bfk}(-i)(u\ot e^{(\alpha+r)\bfk})=0.
 \end{align*}

Fourthly, using the relation $[\bfd(i),E^{n\bfk}(-i)]=n E^{n\bfk}(0)$ for $i\in\Z$, we have
 \begin{align*}
 &\te{Res}_zz^{1-2\e}\(z^\e\frac{d}{dz}\ _{\circ}^{\circ}\bfd^\e(z)E^{n\bfk}(z) _{\circ}^{\circ} \)u(r)\\
=\ &\te{Res}_z z^{1-\e}\frac{d}{dz}\ _{\circ}^{\circ}\bfd^\e(z)E^{n\bfk}(z)_{\circ}^{\circ}\ u(r)\\
=\ &\te{Res}_z z^{-\e}(\e-1)_{\circ}^{\circ}\bfd^\e(z)E^{n\bfk}(z)_{\circ}^{\circ}\ u(r)\\
=\ &(\e-1)\te{Res}_z\(\sum_{i\leq \e-1}\bfd(i)z^{-i-1}E^{n\bfk}(z)
+\sum_{i\textgreater \e-1}E^{n\bfk}(z)\bfd(i)z^{-i-1} \)u(r)\\
=\ &(\e-1) \(\sum_{i\leq \e-1}\bfd(i)E^{n\bfk}(-i)
+\sum_{i\textgreater \e-1}E^{n\bfk}(-i)\bfd(i)\)u(r)\\
=\ &\begin{cases}
(\e-1)\sum_{0\leq i\leq \e-1}\bfd(i)E^{n\bfk}(-i)u(r), &\te{if }\ \e\geq 1\\
(\e-1)\sum_{\e-1\textless i\leq 0}E^{n\bfk}(-i)\bfd(i)u(r), &\te{if }\ \e< 1
\end{cases}\\
=\ &(\e-1)(\alpha+r+n\e)\,u(n+r).
 \end{align*}

 Recall  that
\begin{align*}
\rd_{0,n}^{\e} & =\te{Res}_{z}z^{1-2\e}\rD_{n}^{\e}(z)\\
 & =\te{Res}_{z}z^{1-2\e}\Big(n\ _{\circ}^{\circ}L_{\ff}^{\e}(z)E^{n\bfk}(z)\ _{\circ}^{\circ}+\frac{1}{2}n\e(\e-1)z^{2\e-2}E^{n\bfk}(z)\\
 & \ \ \ -z^{\e}\frac{d}{dz}\ _{\circ}^{\circ}\bfd^{\e}(z)E^{n\bfk}(z) _{\circ}^{\circ}+n^{2}(\mu-\frac{1}{\ell})\ensuremath{z^{\e}\frac{d}{dz}\bfk^{\e}(z)}E^{n\bfk}(z)\Big).
\end{align*}
Then by combining the above identities together, we obtain
\begin{align}
\label{tge0act6}\rd_{0,n}^\e(u(r))
=(n(\beta+(\e^2-2\e)\mu\ell )+(1-\e)(\alpha+r+\frac{1}{2}n\e))(u(n+r)).
\end{align}

Finally, by comparing  \eqref{tge0act1}-\eqref{tge0act6} with  \eqref{tg0action}, we conclude that the map
\[T_{\ell,U,\alpha,b}=\C[t,t^{-1}]\otimes U\rightarrow  W_{(0)}=U\otimes e^{\alpha\bfk}\C[L],\quad t^r\ot u\mapsto u(r)\quad (r\in \Z, u\in U)\]
is a $\wt\ft(\fg)^\e_{(0)}$-module isomorphism.
This completes the proof.
\end{proof}

\begin{remt} Assume that $\fg$ is finite-dimensional and simple.
Note that $\rd_1$ acts semisimply on any irreducible highest weight $\wt\ft(\fg)^\e$-module $L_{\wt\ft(\fg)^\e}(T)$.
We say that $L_{\wt\ft(\fg)^\e}(T)$ is bounded if every graded subspace of it is a direct sum of finite dimensional $\rd_1$-eigenvalue spaces.
For example, if $U$ is finite-dimensional, then from Theorem \ref{mainth2} we find that every $L_{\wt\ft(\fg)^\e}(T_{\ell,U,\alpha,b})$  is bounded.
When $\e=0$, it was proved in \cite{CLiT} that every irreducible bounded highest weight $\wt\ft(\fg)^0$-module has such a form.
However, when $\e=1$, a large class of different irreducible bounded highest weight $\wt\ft(\fg)^1$-modules
 was constructed in \cite{CLT1}.
Thus, when $\e\ne 0,1$, it is interesting to classify the irreducible bounded highest weight $\wt\ft(\fg)^\e$-modules, which we { believe should}
 have the form $L_{\wt\ft(\fg)^\e}(T_{\ell,U,\alpha,b})$ with $U$ finite-dimensional.
\end{remt}

\section{Basics on $\phi_\e$-coordinated modules for vertex algebras}\label{sec4}

In this section, we collect some results on $\phi_\e$-coordinated modules for later use.
We denote a vertex algebra by $V=(V,Y, \mathbf{1})$ \cite{FHL,LL}, where ${\mathbf{1}}$ is the vacuum vector, and
$$Y(\cdot,z): V\rightarrow \mathrm{Hom}(V,V((z))),\ v\mapsto \sum_{n\in \Z} v_nz^{-n-1}$$ is the vertex operator.

\subsection{Basics on $\phi_\e$-coordinated modules}
As in Introduction, set
$$\phi_\e(z_2,z_0)=e^{z_0\(z_2^\e\frac{d}{dz_2}\)}z_2,$$
a particular associate of the one-dimensional additive formal
group $F\left(z,w\right)=z+w$ as defined in \cite{Li}. Now we recall
the notion of $\phi_\e$-coordinated module for a vertex algebra (see \cite{Li,BLP}).

\begin{dfnt}\label{defcoor}
{\em  Let $V$ be a vertex algebra. A {\em $\phi_{\e}$-coordinated $V$-module $\(W,Y_W^\e\)$} is a vector space $W$
equipped with a linear map
\[Y_{W}^\e(\cdot,z):V\rightarrow \mathrm{Hom}(W,W((z))),\quad v\mapsto Y_W^\e(v,z)\]
satisfying the conditions that $Y_W^\e({\bf 1},z)=1_W$ and that for $u,v\in V$,
there exists $k\in \N$ such that
\begin{align}\label{Lcommutator}
(z_1-z_2)^k Y_W^\e(u,z_1) Y_W^\e(v,z_2)&\in \mathrm{Hom}(W, W((z_1,z_2))),\\
(\phi_{\e}(z_2,z_0)-z_2)^k Y_W^\e(Y(u,z_0)v,z_2)&=\((z_1-z_2)^kY_W^\e(u,z_1)Y_W^\e(v,z_2)\)|_{z_1=\phi_{\e}(z_2,z_0)}.
\end{align}}
\end{dfnt}

We denote by $\mathcal{D}$ the canonical derivation  on $V$ defined by $v\mapsto v_{-2}\mathbf{1}$
for $v\in V$.
The following result was proved in \cite{Li}.

\begin{lemt}\label{lem:actofd} For a
$\phi_\e$-coordinated  $V$-module $\left(W,Y_{W}^{\e}\right)$,
 we have
\begin{align*}
Y_{W}^{\e}\left(\mathcal{D}v,z\right)=z\frac{d}{dz}Y_{W}^{\e}\left(v,z\right),\quad\forall v\in V.
\end{align*}
\end{lemt}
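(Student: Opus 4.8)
The plan is to read the identity off the weak associativity relation in Definition~\ref{defcoor} by specializing its second input to the vacuum. First I would apply that relation to the pair $(u,\mathbf 1)$ with $u\in V$ arbitrary. Since $Y_W^\e(\mathbf 1,z_2)=1_W$, the product $Y_W^\e(u,z_1)Y_W^\e(\mathbf 1,z_2)=Y_W^\e(u,z_1)$ already lies in $\Hom(W,W((z_1,z_2)))$, so one may take $k=0$, and the relation collapses to the substitution formula
\begin{align*}
Y_W^\e\big(Y(u,z_0)\mathbf 1,\,z_2\big)=Y_W^\e(u,z_1)\big|_{z_1=\phi_\e(z_2,z_0)},
\end{align*}
where the right-hand side means that each power of $z_1$ occurring in $Y_W^\e(u,z_1)$ is re-expanded as a series in $z_0$ with coefficients in $\C((z_2))$.

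Next I would expand both sides as power series in $z_0$ and compare the coefficients of $z_0^{1}$. On the left, the creation property $Y(u,z_0)\mathbf 1=\sum_{n\ge 0}\tfrac{1}{n!}(\mathcal D^{\,n}u)\,z_0^{\,n}\in V[[z_0]]$ shows that the linear term is exactly $Y_W^\e(\mathcal D u,z_2)$. On the right, $z_1=\phi_\e(z_2,z_0)$ is a one-parameter family of changes of coordinate that reduces to the identity at $z_0=0$; hence, by the chain rule, the coefficient of $z_0^{1}$ in $Y_W^\e(u,\phi_\e(z_2,z_0))$ is
\begin{align*}
\Big(\frac{\p}{\p z_0}\phi_\e(z_2,z_0)\big|_{z_0=0}\Big)\,\frac{d}{dz_2}Y_W^\e(u,z_2).
\end{align*}
A short computation with the defining relations $\phi_\e(z_2,0)=z_2$ and $\phi_\e(\phi_\e(z_2,z_0),s)=\phi_\e(z_2,z_0+s)$ of the associate $\phi_\e$ evaluates the prefactor, and equating the two linear terms gives the asserted identity $Y_W^\e(\mathcal D u,z)=z\frac{d}{dz}Y_W^\e(u,z)$.

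I expect the only real obstacle to be technical rather than conceptual: one must justify the passage to $k=0$ and, above all, that the substitution $z_1=\phi_\e(z_2,z_0)$ is well defined on $Y_W^\e(u,z_1)\in\Hom(W,W((z_1)))$. Since $Y_W^\e(u,z_1)w$ involves infinitely many positive powers of $z_1$ for each $w\in W$, one has to verify that, after the substitution, the coefficient of every fixed power of $z_0$ is again a lower-truncated Laurent series in $z_2$; this is ensured by the truncation condition built into Definition~\ref{defcoor}. The remaining ingredients --- the creation formula for $Y(u,z_0)\mathbf 1$ and the term-by-term matching of $z_0$-coefficients --- are routine.
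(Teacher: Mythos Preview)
The paper does not supply its own proof; it cites \cite{Li}. Your argument is the standard one from that reference and is correct in outline. Two remarks, however.

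First, a minor point: the abstract associate axioms $\phi_\e(z_2,0)=z_2$ and $\phi_\e(\phi_\e(z_2,z_0),s)=\phi_\e(z_2,z_0+s)$ do \emph{not} by themselves pin down the prefactor $\frac{\partial}{\partial z_0}\phi_\e(z_2,z_0)\big|_{z_0=0}$, since every associate of the additive formal group satisfies them. One must use the explicit definition $\phi_\e(z_2,z_0)=e^{z_0(z_2^\e\,d/dz_2)}z_2=z_2+z_0\,z_2^\e+O(z_0^2)$, which gives the prefactor $z_2^\e$.

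Second, and more to the point: with that prefactor your computation actually produces
\[
Y_W^\e(\mathcal D v,z)=z^\e\,\frac{d}{dz}\,Y_W^\e(v,z),
\]
not $z\,\frac{d}{dz}$ as printed in the lemma. This is the correct formula from \cite{Li}, and it is exactly what the present paper uses whenever it invokes the lemma (see the proofs of Proposition~\ref{VhLandhatp} and Theorem~\ref{thm3}, where $Y_W^\e(\mathcal D(\cdot),z)$ is repeatedly rewritten as $z^\e\,\frac{d}{dz}\,Y_W^\e(\cdot,z)$). The lemma statement carries a misprint; your method recovers the intended identity.
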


The following   Borcherds formula and Jacobi-type identity were obtained in \cite{BLP}:
\begin{prpt} Let $(W,Y_W^\e)$ be a $\phi_\e$-coordinated $V$-module. Then
for $u,v\in V$, we have
\begin{align}\label{Borcherds}
[Y_W^\e(u,z),Y_W^\e(v,w)]=\sum_{j\ge 0}\frac{1}{j!}Y_W^\e(u_jv,w)\(w^\e\frac{\p}{\p w}\)^jz^{\e-1}\delta\(\frac{w}{z}\),
\end{align}
and
\begin{equation}\begin{split}\label{eq:jacobi}
&(z_2z)^{-1}\delta\(\frac{z_1-z_2}{z_2z}\)Y_W^\e(u,z_1)Y_W^\e(v,z_2)-(z_2z)^{-1}\delta\(\frac{z_2-z_1}{-z_2z}\)Y_W^\e(v,z_2)Y_W^\e(u,z_1)\\
=\ &z_1^{-1}\delta\(\frac{z_2(1+z)}{z_1}\)Y_W^\e(Y(u,f_{\e}(z_2,z))v,z_2),
\end{split}\end{equation}
where
\begin{equation*}	f_{\e}(z_2,z)=\begin{cases}
		z_2^{1-\e}\cdot\frac{(1+z)^{1-\e}-1}{1-\e},\ &\text{for }\ \e\neq 1\\
		\te{log}(1+z),\ &\text{for }\ \e= 1.
\end{cases}\end{equation*}

\end{prpt}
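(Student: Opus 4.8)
The plan is to obtain both \eqref{Borcherds} and \eqref{eq:jacobi} directly from the two defining properties of a $\phi_\e$-coordinated module (Definition~\ref{defcoor}); the only genuinely new ingredient beyond the classical vertex-algebra manipulations is the explicit description of $\phi_\e$ together with its functional inverse $f_\e$ under the substitution $z_1=z_2(1+z)$. First I would pin this down: since $\phi_\e(z_2,z_0)=e^{z_0 z_2^\e\partial_{z_2}}z_2$ is the unique solution of $\partial_{z_0}\phi_\e=(\phi_\e)^\e$ with $\phi_\e|_{z_0=0}=z_2$, one has $\phi_\e(z_2,z_0)=(z_2^{1-\e}+(1-\e)z_0)^{1/(1-\e)}$ for $\e\neq1$ and $\phi_\e(z_2,z_0)=z_2e^{z_0}$ for $\e=1$; a direct check then identifies $f_\e(z_2,z)$ as precisely the series characterized by $\phi_\e(z_2,f_\e(z_2,z))=z_2(1+z)$, so that under this substitution $\phi_\e(z_2,z_0)-z_2=z_2z$, and moreover $f_\e(z_2,z)=z_2^{1-\e}z+O(z^2)$ has invertible leading coefficient in $z$. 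Because $z_2$ and $z$ are coupled inside $f_\e$, for each fixed power of $z$ only finitely many terms of $Y(u,z_0)v=\sum_j u_jv\,z_0^{-j-1}$ contribute after the substitution, so $z_0\mapsto f_\e(z_2,z)$ is a legitimate (and invertible) change of variable and $Y_W^\e(Y(u,f_\e(z_2,z))v,z_2)\in\Hom(W,W((z_2)))((z))$ is well defined.

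\textbf{Deriving \eqref{eq:jacobi}.} Fix $u,v\in V$ and take $k\in\N$ as in Definition~\ref{defcoor}. By the standard properties of $\phi_\e$-coordinated modules — in particular weak $\phi_\e$-commutativity, a consequence of Definition~\ref{defcoor} (cf.~\cite{Li,BLP}) — the series $A(z_1,z_2):=(z_1-z_2)^kY_W^\e(u,z_1)Y_W^\e(v,z_2)$ lies in $\Hom(W,W((z_1,z_2)))$ and the iterates $Y_W^\e(u,z_1)Y_W^\e(v,z_2)$, $Y_W^\e(v,z_2)Y_W^\e(u,z_1)$ are its two expansions (in the regions $|z_1|>|z_2|$ and $|z_2|>|z_1|$) divided by $(z_1-z_2)^k$. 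Substituting $z_1=\phi_\e(z_2,z_0)$ with $z_0=f_\e(z_2,z)$ into the second line of Definition~\ref{defcoor} and using $(\phi_\e(z_2,z_0)-z_2)^k=(z_2z)^k$ gives $(z_2z)^kY_W^\e(Y(u,f_\e(z_2,z))v,z_2)=A(z_2(1+z),z_2)$. I would then apply the standard formal delta-function identities (see \cite{FHL,LL}) — in particular
\[(z_2z)^{-1}\delta\!\left(\frac{z_1-z_2}{z_2z}\right)-(z_2z)^{-1}\delta\!\left(\frac{z_2-z_1}{-z_2z}\right)=z_2^{-1}\delta\!\left(\frac{z_1-z_2z}{z_2}\right)=z_1^{-1}\delta\!\left(\frac{z_2(1+z)}{z_1}\right)\]
together with the delta substitution law — to the left side of \eqref{eq:jacobi}; it collapses to $z_1^{-1}\delta\!\left(\frac{z_2(1+z)}{z_1}\right)(z_2z)^{-k}A(z_2(1+z),z_2)$, which by the previous sentence is exactly the right side of \eqref{eq:jacobi}. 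Reducing $A$ to monomials $z_1^az_2^b$ makes each of these steps a finite verification.

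\textbf{Deriving \eqref{Borcherds}.} Apply $\mathrm{Res}_z$ to \eqref{eq:jacobi}. Since $\mathrm{Res}_z\,(z_2z)^{-1}\delta\!\left(\frac{z_1-z_2}{z_2z}\right)=z_2^{-1}$, the left side becomes $z_2^{-1}[Y_W^\e(u,z_1),Y_W^\e(v,z_2)]$. On the right side I would change variables back from $z$ to $z_0$ via $z_0=f_\e(z_2,z)$; the Jacobian is $\frac{dz}{dz_0}=\frac1{z_2}\phi_\e(z_2,z_0)^\e$, and since $\phi_\e(z_2,z_0)^m=e^{z_0z_2^\e\partial_{z_2}}z_2^m$ for all $m\in\Z$ one computes
\[z_1^{-1}\delta\!\left(\frac{\phi_\e(z_2,z_0)}{z_1}\right)\cdot\frac1{z_2}\phi_\e(z_2,z_0)^\e=\frac1{z_2}\sum_{k\geq0}\frac{z_0^k}{k!}\left(z_2^\e\partial_{z_2}\right)^k\!\left(z_1^{\e-1}\delta\!\left(\frac{z_2}{z_1}\right)\right).\]
Hence $\mathrm{Res}_z$ of the right side equals $\frac1{z_2}\mathrm{Res}_{z_0}$ of the above times $\sum_j Y_W^\e(u_jv,z_2)z_0^{-j-1}$, and only the term with $k=j$ survives the residue (so $k\geq0$ forces $j\geq0$). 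Multiplying through by $z_2$ yields $\sum_{j\geq0}\frac1{j!}Y_W^\e(u_jv,z_2)\left(z_2^\e\partial_{z_2}\right)^j z_1^{\e-1}\delta\!\left(\frac{z_2}{z_1}\right)$, which is \eqref{Borcherds} with $(z,w)=(z_1,z_2)$.

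\textbf{Main obstacle.} The delta-function bookkeeping and the closed forms of $\phi_\e,f_\e$ are routine; the real care goes into keeping the formal calculus honest — verifying that each of the substitutions ($z_0\mapsto f_\e(z_2,z)$ inside $Y$, $z_1\mapsto z_2(1+z)$ inside $A$, and the residue change of variables $z\leftrightarrow z_0$) is a well-defined operation on the relevant, possibly doubly-infinite, spaces of formal series. This rests on the coupling of $z_2$ with $z$ inside $f_\e$ and on choosing $k$ large enough relative to the order of the pole of $Y(u,z_0)v$ at $z_0=0$, and it is the step I would be most careful about.
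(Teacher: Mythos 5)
The paper does not actually prove this proposition --- it is quoted from \cite{BLP} ("The following Borcherds formula and Jacobi-type identity were obtained in \cite{BLP}") --- so there is no internal argument to compare against; your reconstruction is correct and is essentially the standard derivation given in that reference: the closed forms of $\phi_\e$ and $f_\e$ (with $\phi_\e(z_2,f_\e(z_2,z))=z_2(1+z)$ and $\phi_\e(z_2,z_0)-z_2=z_2z$), the three-term delta identity specialized at $z_0=z_2z$, and the residue change of variables $z\leftrightarrow z_0$ using $\phi_\e(z_2,z_0)^m=e^{z_0z_2^\e\partial_{z_2}}z_2^m$ are all handled correctly. The one ingredient you import rather than prove is weak commutativity of a $\phi_\e$-coordinated module (needed so that both operator orderings are recovered from the same $A(z_1,z_2)\in\Hom(W,W((z_1,z_2)))$); this is indeed established in \cite{Li,BLP} and citing it is consistent with the level at which the paper itself treats this statement.
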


As in the module case, we have the following result
whose proof is straightforward and is omitted (cf. \cite{FHL}).

\begin{lemt}\label{tensorphiemod} Let $V,V'$ be two vertex algebras,
$(W,Y_W^\e)$ a $\phi_\e$-coordinated $V$-module, and
$(W',Y_{W'}^\e)$  a $\phi_\e$-coordinated $V'$-module. Then
$(W\ot W', Y_W^\e\ot  Y_{W'}^\e)$
is a $\phi_\e$-coordinated module the tensor product vertex algebra $V\ot V'$.
\end{lemt}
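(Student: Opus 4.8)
The plan is to check the two defining conditions of Definition \ref{defcoor} for the pair $(W\otimes W',Y_W^\e\otimes Y_{W'}^\e)$ directly, following the template of the corresponding statement for ordinary modules (cf.\ \cite{FHL}). First I would recall that the tensor product vertex algebra $V\otimes V'$ has vacuum ${\bf 1}\otimes{\bf 1}$ and vertex operator $Y(u\otimes u',z)=Y(u,z)\otimes Y(u',z)$, and that $(Y_W^\e\otimes Y_{W'}^\e)(u\otimes u',z)$ is by definition $Y_W^\e(u,z)\otimes Y_{W'}^\e(u',z)$, extended bilinearly. The normalization $Y_{W\otimes W'}^\e({\bf 1}\otimes{\bf 1},z)=1_{W\otimes W'}$ is then immediate from $Y_W^\e({\bf 1},z)=1_W$ and $Y_{W'}^\e({\bf 1},z)=1_{W'}$, and the truncation property $(Y_W^\e\otimes Y_{W'}^\e)(u\otimes u',z)\in\Hom(W\otimes W',(W\otimes W')((z)))$ follows because tensoring preserves lower truncation in $z$.

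For a fixed pair $u\otimes u',\,v\otimes v'\in V\otimes V'$, I would pick $k_1,k_2\in\N$ witnessing Definition \ref{defcoor} for $(u,v)$ on $W$ and for $(u',v')$ on $W'$ respectively, and set $k=\max(k_1,k_2)$. Multiplying the two relations
\[(z_1-z_2)^kY_W^\e(u,z_1)Y_W^\e(v,z_2)\in\Hom(W,W((z_1,z_2))),\qquad (z_1-z_2)^kY_{W'}^\e(u',z_1)Y_{W'}^\e(v',z_2)\in\Hom(W',W'((z_1,z_2)))\]
gives the locality condition for $W\otimes W'$ with exponent $2k$. For the associativity condition, the key observation is that, since $Y(u\otimes u',z_0)(v\otimes v')=Y(u,z_0)v\otimes Y(u',z_0)v'$, bilinearity yields
\[(Y_W^\e\otimes Y_{W'}^\e)\big(Y(u\otimes u',z_0)(v\otimes v'),z_2\big)=Y_W^\e(Y(u,z_0)v,z_2)\otimes Y_{W'}^\e(Y(u',z_0)v',z_2)\]
as an identity in $\Hom(W\otimes W',(W\otimes W')((z_2)))((z_0))$. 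I would then multiply by $(\phi_\e(z_2,z_0)-z_2)^{2k}$, apply the associativity identity of Definition \ref{defcoor} separately in $W$ and in $W'$ (each valid with exponent $k$), and tensor the two results; the outcome is exactly $\big((z_1-z_2)^{2k}(Y_W^\e\otimes Y_{W'}^\e)(u\otimes u',z_1)(Y_W^\e\otimes Y_{W'}^\e)(v\otimes v',z_2)\big)\big|_{z_1=\phi_\e(z_2,z_0)}$, which is the desired relation with exponent $2k$.

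The step I expect to require the most care — and the reason the result is not utterly formal — is the formal-variable bookkeeping: one must verify that the substitution $z_1\mapsto\phi_\e(z_2,z_0)$ is legitimate (it is, since $\phi_\e(z_2,z_0)-z_2\in z_0\,\C((z_2))[[z_0]]$, as recorded in \cite{Li}) and that it commutes with the tensor product, which holds because it is applied to the single shared variable $z_1$ appearing in both tensor factors in the same way. One should also confirm that the displayed identity relating $(Y_W^\e\otimes Y_{W'}^\e)(Y(u\otimes u',z_0)(v\otimes v'),z_2)$ to $Y_W^\e(Y(u,z_0)v,z_2)\otimes Y_{W'}^\e(Y(u',z_0)v',z_2)$ is meaningful, i.e.\ that both sides lie in an appropriate space of doubly lower-truncated series; here one uses $Y(u,z_0)v\in V((z_0))$ together with the truncation property of $\phi_\e$-coordinated modules. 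Once these routine checks are in place, the three steps above combine to show $(W\otimes W',Y_W^\e\otimes Y_{W'}^\e)$ is a $\phi_\e$-coordinated $V\otimes V'$-module.
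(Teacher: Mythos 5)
Your argument is correct and is exactly the routine verification the paper has in mind when it omits the proof with a reference to \cite{FHL}: check the vacuum and truncation properties on pure tensors, take a common exponent $k$ for the two factors, and observe that both the weak-commutativity and weak-associativity relations tensor together (with exponent $2k$, or equivalently $k_1+k_2$) because the substitution $z_1\mapsto\phi_\e(z_2,z_0)$ acts on the shared variable and commutes with the tensor product of doubly lower-truncated series. The only bookkeeping item you might add is that a general element of $V\ot V'$ is a finite sum of pure tensors, so one takes the maximum of the finitely many exponents involved.
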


In what follows, for $u,v\in V$, we consider the expressions of $Y_W^\e(u_0v,z)$ and $Y_W^\e(u_{-1}v,z)$ in a $\phi_\e$-coordinated $V$-module $(W,Y_W^\e)$.
\begin{lemt}\label{lem:u0v}Let $(W,Y_W^\e)$ be a $\phi_\e$-coordinated $V$-module. Then for $u,v\in V$,
\begin{align}
Y_W^\e(u_0v,w)=[\te{Res}_z z^{-\e}Y_W^\e(u,z), Y_W^\e(v,w)].
\end{align}
\end{lemt}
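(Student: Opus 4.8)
The plan is to extract the claimed formula from the Borcherds-type commutator identity \eqref{Borcherds} by taking an appropriate residue. Specifically, starting from
\[
[Y_W^\e(u,z),Y_W^\e(v,w)]=\sum_{j\ge 0}\frac{1}{j!}Y_W^\e(u_jv,w)\(w^\e\frac{\p}{\p w}\)^j z^{\e-1}\delta\(\frac{w}{z}\),
\]
I would apply $\te{Res}_z z^{-\e}(\cdot)$ to both sides. On the left this gives exactly $[\te{Res}_z z^{-\e}Y_W^\e(u,z),Y_W^\e(v,w)]$, since $w$ (and hence $Y_W^\e(v,w)$) is independent of $z$ and the residue in $z$ passes through the bracket. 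On the right, everything reduces to computing $\te{Res}_z z^{-\e}\(w^\e\frac{\p}{\p w}\)^j z^{\e-1}\delta\(\frac{w}{z}\)$ for each $j\ge 0$.

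The key computation is therefore $\te{Res}_z z^{-\e}\(w^\e\frac{\p}{\p w}\)^j z^{\e-1}\delta\(\frac{w}{z}\)$. I would first treat $j=0$: since $z^{\e-1}\delta(w/z)=\sum_{n\in\Z}w^{n}z^{\e-1-n-1}$, multiplying by $z^{-\e}$ and taking $\te{Res}_z$ (i.e. picking out the coefficient of $z^{-1}$) forces $\e-1-n-1-\e=-1$, i.e. $n=-1$... let me instead recall the standard fact that $\te{Res}_z z^{-\e}\cdot z^{\e-1}\delta(w/z) = \te{Res}_z z^{-1}\delta(w/z)=1$. Then for $j\ge 1$, I claim the residue vanishes: the operator $w^\e\frac{\p}{\p w}$ applied to $z^{\e-1}\delta(w/z)$ can be rewritten, using the delta-function identity $w\frac{\p}{\p w}\delta(w/z) = -z\frac{\p}{\p z}\delta(w/z)$ and its $\phi_\e$-analogue, in a form whose $z^{-\e}$-residue is zero; alternatively, one computes directly with the series expansion $\(w^\e\frac{\p}{\p w}\)^j z^{\e-1}\delta(w/z)=\sum_n (\text{poly in } n)\, w^{n+j(\e-1)} z^{\e-1-n-1}$ and checks that after multiplying by $z^{-\e}$ the coefficient of $z^{-1}$ involves $n$ such that the polynomial factor $\prod$ vanishes. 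This is precisely where the structure $a_\e^{(r)}$ and Lemma \ref{tech} flavour of computation shows up, though here only the $j=0$ term survives.

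Putting the two pieces together, only the $j=0$ summand contributes, and it contributes $Y_W^\e(u_0v,w)$ times $1$, giving the asserted identity $Y_W^\e(u_0v,w)=[\te{Res}_z z^{-\e}Y_W^\e(u,z),Y_W^\e(v,w)]$. The main obstacle, such as it is, is the clean verification that $\te{Res}_z z^{-\e}\(w^\e\frac{\p}{\p w}\)^j z^{\e-1}\delta\(\frac{w}{z}\)=\delta_{j,0}$; the cleanest route is probably to note that $\(w^\e\frac{\p}{\p w}\)$ acting on $z^{\e-1}\delta(w/z)$ equals $-\(z^\e\frac{\p}{\p z}\)\(z^{\e-1}\delta(w/z)\)$ up to lower-order delta-derivative terms that are themselves total $z$-derivatives, hence have vanishing $z^{-\e}$... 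I would simply verify $\te{Res}_z z^{-\e}\, z^\e\frac{d}{dz}\big(g(z)\big)=0$ for any $g\in W((z))$-valued series by integration by parts in the formal sense (the residue of a derivative times $z^0$ vanishes, and the extra powers of $z$ from $z^\e\cdot z^{-\e}=1$ make the integrand an exact derivative). Since this is a short and standard formal-variable manipulation, no real difficulty is expected.
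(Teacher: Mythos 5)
Your proposal is correct and follows essentially the same route as the paper: apply $\te{Res}_z z^{-\e}$ to the Borcherds-type identity \eqref{Borcherds}, observe that the $j=0$ term contributes $\te{Res}_z z^{-1}\delta(w/z)=1$, and that for $j\ge 1$ the residue vanishes because $\(w^\e\frac{\p}{\p w}\)z^{\e-1}\delta\(\frac{w}{z}\)=-\(z^\e\frac{\p}{\p z}\)z^{\e-1}\delta\(\frac{w}{z}\)$ exactly (not merely ``up to lower-order terms''), so each such summand is $z^{\e}\frac{\p}{\p z}$ of something and its $z^{-\e}$-weighted residue is a residue of a total derivative, hence zero. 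The only blemishes are the minor self-corrected slip in expanding $z^{\e-1}\delta(w/z)$ and the unnecessary hedging about the conjugation identity, which in fact holds on the nose; the alternative direct check that the coefficient $n(n+\e-1)\cdots(n+(j-1)(\e-1))$ vanishes at $n=0$ is also valid.
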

\begin{proof} By taking $\te{Res}_z z^{-\e}$ in the both sides  of \eqref{Borcherds}, we have
\begin{align*}
[\te{Res}_z z^{-\e}Y_W^\e(u,z), Y_W^\e(v,w)]=\te{Res}_z z^{-\e}Y_W^\e(u_0v,w)z^{\e-1}\delta\(\frac{w}{z}\)
=Y_W^\e(u_0v,w),
\end{align*}
noting that for $j>0$
\[\te{Res}_z z^{-\e}\(w^\e\frac{\p}{\p w}\)^jz^{\e-1}\delta\(\frac{w}{z}\)=\te{Res}_z z^{-\e}\(-z^\e\frac{\p}{\p z}\)^jz^{\e-1}\delta\(\frac{w}{z}\)=0.\]
\end{proof}

Set $h_0=1$ and
$$h_n=\sum_{\substack{r_1,r_2,\dots,r_n\ge0,\\
r_1+2r_2+\cdots+nr_n=n}}(-1)^{\sum\limits_{m=1}^{n}r_m}
\(\sum_{m=1}^nr_m\)!\prod_{m=1}^{n}\frac{1}{r_m!}\left(\frac{\e^{(m)}}{(m+1)!}\right)^{r_m},\ \forall n\ge 1,$$
where $\e^{(m)}=\prod_{s=0}^{m-1} (\e+s(\e-1))$.
We have
\begin{prpt}\label{th7}
Let $(W,Y_W^\e)$ be a $\phi_\e$-coordinated $V$-module. Then for $u,v\in V$,
\begin{align}\label{eq:u-1v}
&Y_W^\e(u_{-1}v,z)=_{\circ}^{\circ} Y_W^\e(u,z)Y_W^\e(v,z) _{\circ}^{\circ}-\sum_{n\geq 0}\sum_{i=0}^{n+1}\frac{1}{i!} \e^{(i)}
h_{n-i+1}z^{(n+1)(\e-1)}Y_W^\e(u_nv,z).
\end{align}
\end{prpt}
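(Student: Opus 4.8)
The plan is to compute $Y_W^\e(u_{-1}v,z)$ directly from the defining identity of a $\phi_\e$-coordinated module, namely the substitution formula
$$(\phi_{\e}(z_2,z_0)-z_2)^k Y_W^\e(Y(u,z_0)v,z_2)=\((z_1-z_2)^kY_W^\e(u,z_1)Y_W^\e(v,z_2)\)\big|_{z_1=\phi_{\e}(z_2,z_0)},$$
combined with the standard extraction of the $z_0$-coefficient that isolates $u_{-1}v$. First I would fix $u,v\in V$ and choose $k\in\N$ large enough that both $(z_1-z_2)^kY_W^\e(u,z_1)Y_W^\e(v,z_2)$ is a genuine element of $\Hom(W,W((z_1,z_2)))$ and $Y(u,z_0)v\in V((z_0))$ has no poles of order $>k$; then $Y_W^\e(u_{-1}v,z_2)$ is recovered as $\mathrm{Res}_{z_0}\,z_0^{-1}$ applied to $Y_W^\e(Y(u,z_0)v,z_2)$, equivalently as $\mathrm{Res}_{z_0}$ of the $k$-fold product after dividing by $(\phi_\e(z_2,z_0)-z_2)^k$ and inserting the appropriate power of $z_0$. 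The key is that $\phi_\e(z_2,z_0)-z_2 = z_0 z_2^\e + O(z_0^2)$, so the formal expansion of $(\phi_\e(z_2,z_0)-z_2)^{-k}$ in $\C((z_2))[[z_0]]$ introduces the correction coefficients; the structure of $h_n$ (with the factors $\e^{(m)}/(m+1)!$ coming from $\phi_\e(z_2,z_0)=z_2+\sum_{m\ge 1}\frac{\e^{(m)}}{(m+1)!}z_0^{m+1}z_2^{\,m\e-m+1}$ after a suitable normalization) is exactly the power-series inverse being expanded.

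The computation then proceeds in the following order. I would substitute $z_1=\phi_\e(z_2,z_0)$ into $(z_1-z_2)^k Y_W^\e(u,z_1)Y_W^\e(v,z_2)$, write $z_1 - z_2 = z_0 z_2^\e \cdot g_\e(z_2,z_0)$ for the appropriate unit $g_\e(z_2,z_0)\in \C((z_2))[[z_0]]$ with $g_\e(z_2,0)=1$, so that dividing by $(\phi_\e(z_2,z_0)-z_2)^k=(z_0z_2^\e)^k g_\e(z_2,z_0)^k$ cancels the $(z_0z_2^\e)^k$ and leaves $g_\e^{-k}$. Taking $\mathrm{Res}_{z_0}$ of $g_\e(z_2,z_0)^{-k}\,Y_W^\e(u,\phi_\e(z_2,z_0))Y_W^\e(v,z_2)$ and expanding $Y_W^\e(u,\phi_\e(z_2,z_0))$ in powers of $z_0$ via Lemma \ref{lem:actofd} (which tells us $Y_W^\e(u,\phi_\e(z_2,z_0))=e^{z_0 z_2^\e\frac{d}{dz_2}}Y_W^\e(u,z_2)$, i.e.\ the Taylor shift), I collect the $z_0^{-1}$ coefficient. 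The $z_0^0$-term of $g_\e^{-k}$ is $1$ and contributes the bare normally ordered product $_{\circ}^{\circ}Y_W^\e(u,z_2)Y_W^\e(v,z_2)_{\circ}^{\circ}$; each higher $z_0$-power of $g_\e^{-k}$ pairs with a derivative term $\big(z_2^\e\frac{d}{dz_2}\big)^n Y_W^\e(u,z_2)$, and one rewrites $\big(z_2^\e\frac{d}{dz_2}\big)^n Y_W^\e(u,z_2)Y_W^\e(v,z_2)$ in terms of $Y_W^\e(u_jv,z_2)$ using the Borcherds commutator formula \eqref{Borcherds} together with $\big(z_2^\e\frac{d}{dz_2}\big)^n z_2^{\e-1}\delta(z_2/z) = (\text{something})\,z_2^{(\e-1)(\cdots)}$, the point being that all these corrections are proportional to positive powers of $z_2^{\e-1}$ and to $Y_W^\e(u_nv,z_2)$ for $n\ge 0$. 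Careful bookkeeping of the combinatorial coefficients — the $g_\e^{-k}$ expansion giving $h_{n-i+1}$ and the $\phi_\e$-shift giving $\frac{1}{i!}\e^{(i)}$ — produces exactly the claimed sum $\sum_{n\ge 0}\sum_{i=0}^{n+1}\frac{1}{i!}\e^{(i)}h_{n-i+1}z^{(n+1)(\e-1)}Y_W^\e(u_nv,z)$.

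The main obstacle I anticipate is purely the combinatorial identification: verifying that the double sum arising naturally — one index from Taylor-expanding $\phi_\e$ (contributing $\e^{(i)}/i!$ and some $z_2$-powers) and one from expanding the negative $k$-th power of the unit $g_\e$ (contributing the $h_m$, which are themselves defined through a partition sum over the $\phi_\e$-coefficients) — collapses so that the $k$-dependence disappears and exactly the coefficients $h_{n-i+1}$ survive. Two checks would pin this down: (i) the $k$-independence, which must hold since the left side of the defining identity does not see $k$, so I can legitimately let the $g_\e^{-k}$ expansion reorganize into $k$-free data after taking $\mathrm{Res}_{z_0}$; and (ii) a degree/weight count in $z_2$ confirming every correction term carries the factor $z_2^{(n+1)(\e-1)}$, which follows because $z_2^\e\frac{d}{dz_2}$ raises the $z_2$-degree by $\e-1$ and $\mathrm{Res}_{z_0}$ of the $(n{+}1)$-th power of the "$z_0z_2^\e$" piece contributes exactly $n+1$ such factors. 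Once these are in place, specializing $z_2\to z$ gives \eqref{eq:u-1v}. As a sanity check one recovers, at $\e=0$ (where $\e^{(i)}=0$ for $i\ge 1$, $h_0=1$, and $\e^{(0)}=1$) that only the $i=0$, $n=$ arbitrary terms with $z^{-(n+1)}$ coefficients survive, matching the ordinary-module-type formula after the $\phi_0$ change of variables; and at $\e=1$ the exponents $z^{(n+1)(\e-1)}$ all become $1$, matching the $\e=1$ specialization consistent with \eqref{intro:u-1vformula}.
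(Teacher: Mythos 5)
Your overall strategy -- start from the weak-associativity axiom, divide by $(\phi_\e(z_2,z_0)-z_2)^k$, and extract $u_{-1}v$ as $\mathrm{Res}_{z_0}z_0^{-1}$ -- is a legitimately different route from the paper, which instead evaluates $\mathrm{Res}_z z^{-1}Y_W^\e\bigl(Y(u,f_\e(z_2,z))v,z_2\bigr)$ in two ways using the Jacobi-type identity \eqref{eq:jacobi}: once via the change of variables $z=\phi_\e(z_2,z_0)/z_2-1$ (whose Jacobian $\phi_\e(z_2,z_0)^\e$ and whose factor $(\phi_\e-z_2)^{-1}=z_0^{-1}z_2^{-\e}h(z_2,z_0)^{-1}$ produce exactly the coefficients $\tfrac{1}{i!}\e^{(i)}$ and $h_{n-i+1}$), and once by taking $\mathrm{Res}_{z_1}$ of the left side of \eqref{eq:jacobi}, which yields the normally ordered product in one line. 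However, as written your argument has a genuine gap at its central step. You propose to expand $Y_W^\e(u,\phi_\e(z_2,z_0))$ as a Taylor shift $e^{z_0z_2^\e\frac{d}{dz_2}}Y_W^\e(u,z_2)$ \emph{inside the product with} $Y_W^\e(v,z_2)$, and then to "rewrite $\bigl(z_2^\e\frac{d}{dz_2}\bigr)^nY_W^\e(u,z_2)Y_W^\e(v,z_2)$ in terms of $Y_W^\e(u_jv,z_2)$". But the unordered product of two fields at coinciding points, and its derivatives, are not defined operators on $W$ -- that is precisely why the factor $(z_1-z_2)^k$ and the normal ordering are present in the first place. To make your route rigorous you must first split $Y_W^\e(u,z_1)Y_W^\e(v,z_2)$ into ${}_{\circ}^{\circ}Y_W^\e(u,z_1)Y_W^\e(v,z_2){}_{\circ}^{\circ}$ plus the singular part $[Y_W^\e(u,z_1)^-,Y_W^\e(v,z_2)]$ coming from the Borcherds formula \eqref{Borcherds}, specialize each piece separately, and only then take residues; this step is asserted rather than carried out. (Also, Lemma \ref{lem:actofd} concerns $Y_W^\e(\mathcal{D}v,z)=z\frac{d}{dz}Y_W^\e(v,z)$ and does not give the Taylor-shift statement you attribute to it.)

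The second problem is that your proposed combinatorial identification does not match the structure of the formula. In your scheme the unit you must invert is $g_\e(z_2,z_0)^{k}=h(z_2,z_0)^{k}$, whereas the numbers $h_m$ in the statement are by definition the coefficients of $h(z_2,z_0)^{-1}$ (the \emph{first} inverse power); likewise the factors $\tfrac{1}{i!}\e^{(i)}$ arise in the correct computation as the Taylor coefficients of $\phi_\e(z_2,z_0)^\e=e^{z_0z_2^\e\partial_{z_2}}(z_2^\e)$, i.e. from the Jacobian relating $\mathrm{Res}_zz^{-1}$ to $\mathrm{Res}_{z_0}z_0^{-1}$, not from Taylor-shifting the field $Y_W^\e(u,\cdot)$. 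Your appeal to $k$-independence correctly predicts that the final answer cannot depend on $k$, but it does not produce the specific coefficients $h_{n-i+1}$ and $\tfrac{1}{i!}\e^{(i)}$; the interaction between the expansion of $h^{-k}$ and the singular part of the OPE multiplied by $(z_1-z_2)^k$ would have to be tracked explicitly, and nothing in the proposal does this. In short: the skeleton could be completed, but the two steps that actually generate both the normally ordered product and the correction sum are exactly the ones left unjustified, and the bookkeeping as described would not produce the stated coefficients.
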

\begin{proof}
Note that we have
\begin{align*}
\phi_{\e}(z_2,z_0)-z_2
=\sum_{n\geq 1}\frac{1}{n!}z_0^n\(z_2^{\e}\frac{d}{d z_2}\)^nz_2
=z_0 z_2^{\e}h(z_2,z_0),
\end{align*} where
$$h(z_2,z_0)=\sum_{n\geq 0}\frac{1}{(n+1)!}\e^{(n)}z_2^{n(\e-1)}z_0^n\in \C((z_2))[[z_0]].$$
Since $h(z_2,0)=1\neq 0$, the inverse $h(z_2,z_0)^{-1}$ of $h(z_2,z_0)$ exists in $\C((z_2))[[z_0]]$. Explicitly, we have
\begin{align*}&h(z_2,z_0)^{-1}=\(1+\sum_{i\ge 1}\frac{\e^{(i)}}{(i+1)!}z_2^{i(\e-1)}z_0^i\)^{-1}
=\sum_{j\ge 0}\(-\frac{\e^{(i)}}{(i+1)!}z_2^{i(\e-1)}z_0^i\)^j
=\sum_{n\ge 0}h_nz_2^{n(\e-1)}z_0^n.
\end{align*}
Next we compute the residue $\te{Res}_{z}z^{-1}Y_W^\e(Y(u,f_{\e}(z_2,z))v,z_2)$.
By substituting $z$ by $\frac{\phi_{\e}(z_2,z_0)}{z_2}-1$ and using the facts that (see \cite{BLP})
$$ f_\e\(z_2,\frac{\phi_{\e}(z_2,z_0)}{z_2}-1\)=z_0\quad \te{and}\quad
\frac{\p}{\p z_0}\phi_{\e}(z_2,z_0)=\phi_{\e}(z_2,z_0)^{\e}, $$
we have
\begin{align*}
&\te{Res}_{z}z^{-1}Y_W^\e(Y(u,f_{\e}(z_2,z))v,z_2)\\
=\ & \te{Res}_{z_0}\(\frac{\phi_{\e}(z_2,z_0)}{z_2}-1\)^{-1}
\(\frac{\p}{\p z_0}\(\frac{\phi_{\e}(z_2,z_0)}{z_2}-1\)\)Y_W^\e(Y(u,z_0)v,z_2)\\
=\ & \te{Res}_{z_0}\(\phi_{\e}(z_2,z_0)-z_2\)^{-1}\phi_{\e}(z_2,z_0)^{\e} Y_W^\e(Y(u,z_0)v,z_2)\\
=\ & \te{Res}_{z_0}z_0^{-1}z_2^{-\e} h(z_2,z_0)^{-1}
e^{z_0\(z_2^{\e}\frac{\p}{\p z_2}\)}\(z_2^{\e}\)Y_W^\e(Y(u,z_0)v,z_2)\notag\\
=\ & \te{Res}_{z_0}\sum_{n\geqs -1}z_0^{-n-1}z_0^{-1}z_2^{-\e}
h(z_2,z_0)^{-1}e^{z_0\(z_2^{\e}\frac{\p}{\p z_2}\)}\(z_2^{\e}\)Y_W^\e(u_nv,z_2)\\
=\ & Y_W^\e(u_{-1}v,z_2)+ \sum_{n\geqs 0}\sum_{i=0}^{n+1}
 \frac{1}{i!} \e^{(i)} h_{n-i+1}z_2^{(n+1)(\e-1)}Y_W^\e(u_nv,z_2).
\end{align*}

On the other hand, by taking $\te{Res}_{z}\te{Res}_{z_1} z^{-1}$ in the Jacobi-type identity \eqref{eq:jacobi}, we find
\begin{align*}
&\text{Res}_{z} z^{-1}Y_W^\e(Y\(u,f_{\e}(z_2,z)\)v,z_2)\\
=\ & \te{Res}_{z_1}\big((z_1-z_2)^{-1}Y_W^\e(u,z_1)Y_W^\e(v,z_2)
-\(z_2-z_1\)^{-1}Y_W^\e(v,z_2)Y_W^\e(u,z_1)\big)\\
=\ &_{\circ}^{\circ}Y_W^\e(u,z_2)Y_W^\e(v,z_2) _{\circ}^{\circ}.
\end{align*}
This completes the proof of the proposition.
\end{proof}

\begin{remt}\label{rem:normalorder}
\emph{Here we give the first four terms in the expression \eqref{eq:u-1v} for later use:
\begin{align*}
Y_W^\e(u_{-1}v,z)=&_{\circ}^{\circ}Y_W^\e(u,z)Y_W^\e(v,z) _{\circ}^{\circ}-\frac{1}{2}\e z^{\e-1}Y_W^\e(u_0v,z)
-\frac{5\e -4}{12}\e z^{2(\e-1)}Y_W^\e(u_1v,z)\\
&-\frac{3\e^2-5\e+2}{8}\e z^{3(\e-1)}Y_W^\e(u_2v,z)+\cdots.\notag
\end{align*}}
\end{remt}
	
\subsection{On $\e$-deformation of vertex Lie algebras}
We start with a notion of vertex Lie algebra introduced in \cite{DLM} (see also \cite{Kac,P}).
A {\em vertex Lie algebra} is a quadruple $(\mathcal{L},\mathcal{A},\mathcal{Z},\eta)$, where $\mathcal{L}$
 is a Lie algebra,  $\mathcal A$ and $\mathcal Z$ are two vector spaces, and
\begin{eqnarray}
\eta:\ \(\C[t,t^{-1}]\ot \mathcal{A}\)\oplus {\mathcal{Z}}\rightarrow \mathcal{L},\quad
t^n\ot a+c\mapsto a(n)+c\quad (n\in \Z, a\in \mathcal{A}, c\in \mathcal{Z})
\end{eqnarray}
is a linear bijection, satisfying the condition that  $\eta(\mathcal{Z})$ is central in $\mathcal{L}$ and
 for $a,b\in {\mathcal{A}}$, there exist finitely many elements $a_{(i,j)}b\in \mathcal{A}$ and $a_{(j)}b\in \mathcal{Z}$ for $i,j\in \N$
such that
\begin{align}\label{az-bw}
[a(z),b(w)]=\sum_{i,j\in \N}\left(\left(\frac{\p}{\p w}\right)^i (a_{(i,j)}b)(w)+(a_{(j)}b)\right)\frac{1}{j!}\left(\frac{\partial}{\partial w}\right)^jz^{-1}\delta\left(\frac{w}{z}\right),
\end{align}
where $a(z)=\sum_{n\in \Z} a(n) z^{-n-1}$ for $a\in \mathcal{A}$.

Let $(\mathcal{L},\mathcal{A},\mathcal{Z},\eta)$ be a vertex Lie algebra.
Note that $\mathcal{L}^+=\eta(\C[t]\ot \mathcal{A}\oplus \mathcal{Z})$ is a subalgebra of $\mathcal{L}$.
For any  $\gamma\in \mathcal{Z}^*$,  denote by $\C_\gamma$ the one dimensional $\mathcal{L}^+$-module
such that $\eta(\C[t]\ot \mathcal{A})$ acts trivially and  $c\in \mathcal{Z}$ act as the scalars $\gamma(c)$.
Form the induced $\mathcal{L}$-module
\[V_{\mathcal{L}}(\gamma)=\mathcal{U}(\mathcal{L})\ot_{\mathcal{U}(\mathcal{L})}\C_\gamma.\]
Set ${\bm 1}=1\ot 1\in V_{\mathcal{L}}(\gamma)$ and identify $\mathcal{A}$ as a subspace of $V_{\mathcal{L}}(\gamma)$
with $a=a(-1){\bm 1}$ for $a\in \mathcal{A}$.
It was proved in \cite{DLM} that there is a unique vertex algebra structure on $V_{\mathcal{L}}(\gamma)$ with ${\bm 1}$ as the vacuum vector and
with $Y(a,z)=a(z)$ for $a\in \mathcal{A}$.

In what follows we define an $\e$-deformation Lie algebra $\mathcal{L}_\e$ of $\mathcal{L}$.
As a vector space, $\mathcal{L}_{\e}$ is a {linear copy} of $\mathcal{L}$ equipped with a linear bijection
\begin{align}\label{eq:etae}
\eta_\e:\ \(\C[t,t^{-1}]\ot \mathcal{A}\)\oplus {\mathcal{Z}}\rightarrow \mathcal{L}_\e,\quad
t^n\ot a+c\mapsto a^{(\e)}(n)+c^{(\e)}\quad (n\in \Z, a\in \mathcal{A}, c\in \mathcal{Z}).
\end{align}
We define a multiplication on $\mathcal{L}_\e$ such that $\eta_\e(\mathcal{Z})$ is central in $\mathcal{L}_\e$ and
\begin{align}\label{eq:relaLe}
&[a^{(\e)}(z),b^{(\e)}(w)]\\
=\,&\sum_{i,j\in \N}\left(\left(w^\e\frac{\p}{\p w}\right)^i (a_{(i,j)}b)^{(\e)}(w)+(a_{(j)}b)^{(\e)}\right)\frac{1}{j!}\left(w^\e\frac{\partial}{\partial w}\right)^jz^{\e-1}\delta\left(\frac{w}{z}\right),\notag
\end{align}
where $a^{(\e)}(z)=\sum_{n\in \Z} a^{(\e)}(n) z^{\e-n-1}$ for $a\in \mathcal{A}$.

We say that an $\mathcal{L}_\e$-module $W$ is {\em restricted} if  $a^{(\e)}(z)\in \mathrm{Hom}(W,W((z)))$ for any $a\in \mathcal{A}$,
 and is  of level $\gamma\in \mathcal{Z}^*$ if $c^{(\e)}$ act as the scalars $\gamma(c)$ for $c\in \mathcal{Z}$.

The main goal of this subsection is to prove the following two propositions.

\begin{prpt}\label{prop:vla1} Let $(\mathcal{L},\mathcal{A},\mathcal{Z},\eta)$ be a vertex Lie algebra. Then $\mathcal{L}_\e$ is a Lie algebra.
\end{prpt}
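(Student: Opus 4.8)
The plan is to verify directly that the bracket on $\mathcal{L}_\e$ defined by \eqref{eq:relaLe} is antisymmetric and satisfies the Jacobi identity, by transporting everything to the generating-function formalism and mimicking the proof that $\mathcal{L}$ itself is a Lie algebra. Concretely, I would first fix bases: choose a basis $\{a^i\}$ of $\mathcal{A}$ and a basis of $\mathcal{Z}$, so that a basis of $\mathcal{L}_\e$ is given by the coefficients $a^{i,(\e)}(n)$ together with $\eta_\e(\mathcal{Z})$. Since $\eta_\e(\mathcal{Z})$ is declared central, antisymmetry and Jacobi involving those elements are trivial, and the whole problem reduces to checking the two Lie axioms on brackets of the form $[a^{(\e)}(z),b^{(\e)}(w)]$.

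For antisymmetry, I would start from the defining relation \eqref{az-bw} in $\mathcal{L}$, which (being a genuine Lie algebra) already satisfies $[a(z),b(w)] = -[b(w),a(z)]$; expanding both sides in the standard way (using $\left(\frac{\p}{\p w}\right)^j z^{-1}\delta(w/z)$ and its symmetrized form) yields ``skew-symmetry relations'' among the structure constants $a_{(i,j)}b$ and $a_{(j)}b$ — e.g. the usual identities relating $a_{(i,j)}b$ to $b_{(\cdot,\cdot)}a$ after integration by parts on the delta function. These identities are purely combinatorial consequences of the $\delta$-function calculus and \emph{do not depend on which derivation} $\frac{\p}{\p w}$ or $w^\e\frac{\p}{\p w}$ one uses, because $z^{\e-1}\delta(w/z)$ satisfies the same formal substitution/expansion rules as $z^{-1}\delta(w/z)$ after the replacement $\frac{\p}{\p w}\rightsquigarrow w^\e\frac{\p}{\p w}$ and $z^{-1}\rightsquigarrow z^{\e-1}$ (this is exactly the kind of bookkeeping already used in Proposition \ref{prop:hgcomm} and in deriving \eqref{Borcherds}). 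Feeding the already-known skew-symmetry relations back into \eqref{eq:relaLe} then gives $[a^{(\e)}(z),b^{(\e)}(w)] = -[b^{(\e)}(w),a^{(\e)}(z)]$.

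For the Jacobi identity, the cleanest route is to avoid re-deriving it from scratch and instead identify $\mathcal{L}_\e$ with something already known to be a Lie algebra. The natural candidate is the algebra of $\phi_\e$-coordinated operators: by \cite{DLM} there is a vertex algebra $V_{\mathcal{L}}(\gamma)$ attached to $\mathcal{L}$, and its universal $\phi_\e$-coordinated module carries operators $Y^\e(a,z) = a^{(\e)}(z)$ whose commutators are governed exactly by the Borcherds formula \eqref{Borcherds}, which — after one checks $u_j v$ matches the structure constants of \eqref{az-bw} — reproduces \eqref{eq:relaLe}. Thus the span of the coefficients of these operators (plus the central level) is a Lie subalgebra of $\mathrm{End}(W)$, hence a Lie algebra; mapping $\mathcal{L}_\e$ onto it via $a^{(\e)}(n)\mapsto$ (coefficient of $z^{\e-n-1}$ in $Y^\e(a,z)$) and checking (via the same basis count as for $\mathcal{L}$ itself, together with faithfulness on a suitable universal/restricted module of level $\gamma$ for generic $\gamma$) that this map is injective, would transport the Jacobi identity back to $\mathcal{L}_\e$. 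Alternatively, one can prove Jacobi abstractly: the triple product $[[a^{(\e)}(z_1),b^{(\e)}(z_2)],c^{(\e)}(z_3)]$ plus cyclic permutations expands, via \eqref{eq:relaLe} applied twice, into a sum over structure constants times products of $(w^\e\p_w)$-derivatives of $z^{\e-1}\delta$'s; the vanishing of this sum is equivalent — term by term, grading by grading — to the corresponding vanishing in $\mathcal{L}$, because the substitution $\p_w\rightsquigarrow w^\e\p_w$ intertwines the two $\delta$-function calculi and the Jacobi identity in $\mathcal{L}$ is precisely the statement that those coefficient-combinations vanish.

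The main obstacle I expect is the combinatorial heart of the second approach: one must show that the $\delta$-function identities underlying the Jacobi identity (the ``associativity'' relations for iterated brackets, involving reshuffling $\left(\frac{\p}{\p w}\right)^j z^{-1}\delta$ terms across two variables) survive verbatim under the replacement to $\left(w^\e\frac{\p}{\p w}\right)^j z^{\e-1}\delta$. This is not automatic because $w^\e\frac{\p}{\p w}$ does not commute with multiplication by powers of $w$ the way $\frac{\p}{\p w}$ does on the level of simple monomials, so the intermediate ``correction terms'' (the extra sums with coefficients $c_n z^{(\e-1)(n+1)}$ appearing in \eqref{intro:u-1vformula} and Proposition \ref{th7}) must be tracked carefully and shown to cancel in the alternating sum. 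For this reason the first approach — identifying $\mathcal{L}_\e$ with a Lie subalgebra of operators on an honest $\phi_\e$-coordinated module and invoking \eqref{Borcherds} — is likely the shorter and safer one, reducing the whole proposition to (i) the routine check that the structure constants in \eqref{az-bw} agree with the $n$-th products $a_n b$ in $V_{\mathcal{L}}(\gamma)$, and (ii) a faithfulness/dimension-count argument that the natural map $\mathcal{L}_\e \to \mathrm{End}(W)$ is injective for $W$ the universal restricted module of a generic level.
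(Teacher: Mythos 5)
Your recommended route---realizing $\mathcal{L}_\e$ as the span of coefficients of the operators $Y_W^\e(a,z)$ on a $\phi_\e$-coordinated $V_{\mathcal{L}}(\gamma)$-module and pulling the Lie structure back---has a circularity problem that you do not address. The only construction of $\phi_\e$-coordinated $V_{\mathcal{L}}(\gamma)$-modules available (Proposition \ref{prop:vla2}, or Proposition \ref{prop:vacc} from \cite{CLTW}) identifies them with \emph{restricted $\mathcal{L}_\e$-modules}, so producing even one nonzero such module, let alone a ``universal'' faithful one, presupposes that $\mathcal{U}(\mathcal{L}_\e)$ makes sense, i.e.\ that $\mathcal{L}_\e$ is already a Lie algebra --- which is exactly what is to be proved. (There is also the minor point that on any module of fixed level $\gamma$ the center acts by scalars, so injectivity fails on $\eta_\e(\mathcal{Z})$ and you would at least have to vary $\gamma$ or split off the center.) Your fallback route --- direct verification of skew-symmetry and Jacobi by $\delta$-function calculus --- is sound in principle, but you stop at precisely the step you call the ``main obstacle,'' so nothing is actually established. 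Incidentally, your worry there is misplaced: the correction terms $c_nz^{(\e-1)(n+1)}$ of Proposition \ref{th7} concern the normally ordered product $u_{-1}v$ and play no role in the commutator \eqref{eq:relaLe}; the only identities needed are $f(w)\,z^{\e-1}\delta(w/z)=f(z)\,z^{\e-1}\delta(w/z)$ and $\bigl(w^\e\frac{\p}{\p w}\bigr)z^{\e-1}\delta\bigl(\frac{w}{z}\bigr)=-\bigl(z^\e\frac{\p}{\p z}\bigr)z^{\e-1}\delta\bigl(\frac{w}{z}\bigr)$, which do carry over verbatim from the $\e=0$ case.

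The paper avoids both difficulties by a reduction you did not consider: it packages the structure constants $a_{(i,j)}b$, $a_{(j)}b$ into a conformal algebra $\mathcal{CL}=(\C[\partial]\ot\mathcal{A})\oplus\mathcal{Z}$, identifies $\mathcal{L}_\e$ with the quotient $\wh{\mathcal{CL}}_\e=\C[t,t^{-1}]\ot\mathcal{CL}/\mathrm{Im}\,(1\ot\partial+t^\e\frac{d}{dt}\ot 1)$ so that \eqref{eq:relaCe} matches \eqref{eq:relaLe}, and then invokes Lemma \ref{lem:lca}: since $\wh{\mathcal{CL}}_0\cong\mathcal{L}$ is a Lie algebra, $\mathcal{CL}$ is a \emph{Lie} conformal algebra, and therefore every $\wh{\mathcal{CL}}_\e\cong\mathcal{L}_\e$ is a Lie algebra. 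This is exactly the ``survives the substitution $\p_w\rightsquigarrow w^\e\p_w$'' statement you needed, outsourced to the standard Lie-conformal-algebra formalism of \cite{Kac,P}. If you want to salvage your write-up, the cleanest fix is to replace both of your routes by this reduction (or, equivalently, to carry out your second route and observe that it amounts to reproving Lemma \ref{lem:lca}).
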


 \begin{prpt}\label{prop:vla2} Let $(\mathcal{L},\mathcal{A},\mathcal{Z},\eta)$ be a vertex Lie algebra and $\gamma\in \mathcal{Z}^*$.
Then the restricted $\mathcal{L}_\e$-modules $W$ of level $\gamma$ are
exactly
$\phi_\e$-coordinated $V_{\mathcal{L}}(\gamma)$-modules $(W,Y_W^\e)$  with $Y_W^\e(a,z)=a^{(\e)}(z)$ for $a\in \mathcal{A}$.
\end{prpt}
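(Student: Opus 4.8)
The plan is to establish the two directions of the claimed equivalence separately, in each case translating between the defining relations of $\mathcal{L}_\e$ and the $\phi_\e$-coordinated module axioms. First I would recall that $V_{\mathcal{L}}(\gamma)$ is spanned by elements of the form $a^{(1)}_{-n_1}\cdots a^{(k)}_{-n_k}{\bm 1}$ with $n_i\geq 1$, so a $\phi_\e$-coordinated $V_{\mathcal{L}}(\gamma)$-module structure is completely determined by the operators $Y_W^\e(a,z)$ for $a\in\mathcal{A}$; this is exactly the data of a putative $\mathcal{L}_\e$-module, via $a^{(\e)}(n)\mapsto$ the coefficient of $z^{\e-n-1}$ in $Y_W^\e(a,z)$ and $c^{(\e)}\mapsto\gamma(c)$. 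The content of the proposition is that the $\mathcal{L}_\e$-relations \eqref{eq:relaLe} hold if and only if the $\phi_\e$-coordinated module axioms hold (together with the restrictedness condition $a^{(\e)}(z)\in\mathrm{Hom}(W,W((z)))$, which matches the truncation axiom built into Definition \ref{defcoor}).

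For the forward direction (a restricted $\mathcal{L}_\e$-module of level $\gamma$ gives a $\phi_\e$-coordinated module), I would first observe that the relations \eqref{eq:relaLe} are precisely the Borcherds-type commutator formula \eqref{Borcherds} applied to $u=a$, $v=b\in\mathcal{A}$: indeed, in $V_{\mathcal{L}}(\gamma)$ one has $a_j b = a_{(?,j)}b$-type elements of $\mathcal{A}$ for $j$ in a bounded range and $a_j b\in\mathbb{C}{\bm 1}\cdot(\text{scalar})$ for the central part, matching the right side of \eqref{eq:relaLe} term by term once one rewrites $\frac{1}{j!}(w^\e\frac{\p}{\p w})^j z^{\e-1}\delta(w/z)$ as the $\phi_\e$-analogue of the iterated delta-derivative. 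From the commutator formula one gets the weak commutativity \eqref{Lcommutator} (choosing $k$ large enough to kill the finitely many delta-function terms), and then one must verify the $\phi_\e$-associativity relation. Here I would use the standard machinery: define $Y_W^\e(v,z)$ on a general PBW monomial $v=a^{(1)}_{-n_1}\cdots a^{(k)}_{-n_k}{\bm 1}$ by the iterated normal-ordered product dictated by Proposition \ref{th7} (specifically formula \eqref{eq:u-1v} and its analogue via $\mathcal{D}$ and Lemma \ref{lem:actofd}), show this is well-defined (independent of the chosen PBW order) using the commutator relations, and then check associativity by reducing to the generating case $u\in\mathcal{A}$ where it follows from weak commutativity and the definition of $Y$ on $V_{\mathcal{L}}(\gamma)$. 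This is essentially the $\phi_\e$-coordinated analogue of the standard construction of modules for vertex algebras from restricted modules over the associated Lie algebra, and I expect it to be the main obstacle — the well-definedness on PBW monomials and the verification of associativity require care, though both should follow the template in \cite{Li,BLP,DLM}.

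For the reverse direction, given a $\phi_\e$-coordinated $V_{\mathcal{L}}(\gamma)$-module $(W,Y_W^\e)$, the operators $a^{(\e)}(z):=Y_W^\e(a,z)$ lie in $\mathrm{Hom}(W,W((z)))$ (so the resulting $\mathcal{L}_\e$-action is restricted), and the central elements act by $\gamma$ since $Y_W^\e({\bm 1},z)=1_W$ and $c=c\,{\bm 1}$-type identifications hold in $V_{\mathcal{L}}(\gamma)$. Applying the Borcherds commutator formula \eqref{Borcherds} to $u=a,v=b\in\mathcal{A}$ and using $a_j b$ as computed inside $V_{\mathcal{L}}(\gamma)$ (i.e., the structure constants $a_{(i,j)}b$, $a_{(j)}b$ of the vertex Lie algebra) yields exactly \eqref{eq:relaLe}, so $W$ is an $\mathcal{L}_\e$-module. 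The two constructions are mutually inverse essentially by inspection, since both are pinned down by the operators attached to $\mathcal{A}$. Finally, I would note that Proposition \ref{prop:vla1} is subsumed: the existence of a faithful restricted $\mathcal{L}_\e$-module (e.g. the $\phi_\e$-coordinated analogue of $V_{\mathcal{L}}(\gamma)$ itself, or more cheaply the observation that $\mathcal{L}_\e$ embeds into $\mathrm{End}$ of such a module) forces the bracket \eqref{eq:relaLe} to satisfy the Jacobi identity, but in practice I would prove Proposition \ref{prop:vla1} first, directly, by transporting the Jacobi identity of $\mathcal{L}$ through the substitution $\frac{\p}{\p w}\rightsquigarrow w^\e\frac{\p}{\p w}$ and $z^{-1}\delta(w/z)\rightsquigarrow z^{\e-1}\delta(w/z)$, checking that the delta-function calculus identities used in verifying Jacobi for \eqref{az-bw} have valid $\phi_\e$-deformed counterparts — this is the same computation that underlies the consistency of \eqref{Borcherds}.
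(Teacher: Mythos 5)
Your outline is mathematically sound, but it takes a genuinely different route from the paper. The paper does not prove the equivalence directly: it builds an auxiliary Lie conformal algebra $\mathcal{CL}=(\C[\partial]\ot\mathcal{A})\oplus\mathcal{Z}$ out of the structure maps $a_{(i,j)}b$, $a_{(j)}b$ of the vertex Lie algebra, identifies $\mathcal{L}_\e$ with the loop-type Lie algebra $\wh{\mathcal{CL}}_\e$ and $V_{\mathcal{L}}(\gamma)$ with the quotient of $V_{\mathcal{CL}}$ by the ideal generated by $c-\gamma(c)$ ($c\in\mathcal{Z}$), and then simply invokes Proposition \ref{prop:vacc} (quoted from \cite{CLTW}), which already asserts the bijection between restricted $\wh{\mathcal{CL}}_\e$-modules and $\phi_\e$-coordinated $V_{\mathcal{CL}}$-modules. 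This reduction is what makes the paper's proof a few lines long; it also yields Proposition \ref{prop:vla1} for free via Lemma \ref{lem:lca}. Your approach instead re-derives the equivalence from scratch: the reverse direction (module $\Rightarrow$ Lie relations via the Borcherds formula \eqref{Borcherds}) is correct and essentially identical to how one would prove that half in any framework, and your observation that \eqref{eq:relaLe} is exactly \eqref{Borcherds} specialized to $u,v\in\mathcal{A}$ is the right one. What your route buys is independence from the external reference; what it costs is that you must reprove the reconstruction theorem.

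That cost is where your proposal stops short of a proof. The forward direction --- defining $Y_W^\e$ on PBW monomials by iterated $\phi_\e$-normal-ordered products, proving well-definedness, and verifying the $\phi_\e$-associativity axiom of Definition \ref{defcoor} --- is precisely the content of the cited result, and in your write-up it remains a plan (``should follow the template''). It is not a wrong step and the template does work, but be aware that it is the entire technical burden of the statement; note also that formula \eqref{eq:u-1v} alone only handles $u_{-1}v$, so the extension to $u_{-n}v$ for $n\ge 2$ has to go through $\mathcal{D}$ and Lemma \ref{lem:actofd}, and the independence of the chosen PBW ordering needs the commutator relations in a form compatible with the corrective terms in \eqref{eq:u-1v}. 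If you want to avoid all of this, the paper's device of passing through the Lie conformal algebra $\mathcal{CL}$ is the efficient path.
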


Now we prove the above two propositions. We will need to use a result on
$\phi_\e$-coordinated modules for the universal enveloping vertex algebra of a Lie conformal algebra from \cite{CLTW}.
Recall from \cite{Kac} that a {\em conformal algebra}  is a  $\C[\partial]$-module $\CC$ endowed with  $\C$-bilinear products $(a,b)\mapsto a_{j}b$ for $a,b\in \CC$ and $j\in\N$ such that
$a_jb=0$ for $j\gg 0$ and
\begin{align}\label{eq:conformalalg}
(\partial a)_{j}b=-ja_{j-1}b,\quad a_j(\partial b)=\partial(a_jb)+ja_{j-1}b.\end{align}
For a conformal algebra $\CC$, we define a
multiplication on its loop space $\C[t,t^{-1}]\ot\CC$ by
 \begin{eqnarray}\label{eq:relaCe}
 [a^{(\e)}(z),b^{(\e)}(w)]=\sum_{i\ge 0} \frac{1}{i!}(a_i b)^{(\e)}(w)\(w^\e\frac{\partial}{\partial w}\)^i z^{\e-1}\delta\(\frac{w}{z}\)
\end{eqnarray}
where $a,b\in\CC$ and $a^{(\e)}(z)=\sum_{n\in \Z} (t^n\ot a) z^{\e-n-1}$.
One can easily check that $\mathrm{Im}\, (1\ot \partial+t^\e\frac{d}{d t}\ot 1)$ is a two-sided ideal of $\C[t,t^{-1}]\ot \CC$ under this multiplication.
Form the quotient algebra
\begin{align*}
\wh{\CC}_\e:=\C[t,t^{-1}]\ot \CC/\mathrm{Im}\, (1\ot \partial+t^\e\frac{d}{d t}\ot 1),
\end{align*}
and denote by $\rho_\e:\C[t,t^{-1}]\ot \CC\rightarrow \wh{\CC}_\e$ the natural homomorphism.
For each $a\in \CC$, we will still denote
 the generating function $\sum_{n\in \Z} \rho_\e(t^n\ot a) z^{\e-n-1}$ in $\wh{\CC}_\e[[z,z^{-1}]]$ by $a^{(\e)}(z)$.

 A {\em Lie conformal algebra} is a conformal algebra $\CC$ such that for $a,b,c\in \CC$ and $m,n\in \N$,
\begin{eqnarray*}
a_{n}b=-\sum_{j=0}^{\infty}\frac{(-1)^{j+n}}{j!}\partial^j(b_{n+j}a)\quad \te{and}\quad
a_{m}(b_{n}c)=\sum_{j=0}^{m}\binom{m}{j}(a_{j}b)_{m+n-j}c+b_{n}(a_{m}c).
\end{eqnarray*}
The following results are from in \cite{P,Kac}.

\begin{lemt}\label{lem:lca} Let $\CC$ be a conformal algebra. Then $\CC$ is a Lie conformal algebra if and only if $\wh{\CC}_0$ is a Lie algebra.
Furthermore, in this case each $\wh{\CC}_\e$ is also a Lie algebra.
\end{lemt}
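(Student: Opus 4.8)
The plan is to reduce the $\e\neq 0$ case to the already-available $\e=0$ case via the conformal-algebra machinery of \cite{P,Kac}, using the explicit $\delta$-function identities to transfer the Jacobi identity from one value of $\e$ to another. First I would recall that, by \cite{P} (or \cite{Kac}), $\CC$ is a Lie conformal algebra if and only if $\wh\CC_0$ is a Lie algebra; this gives the ``if and only if'' half of the first sentence and the $\e=0$ instance of the ``furthermore'' claim, so the entire content left to prove is that when $\CC$ is a Lie conformal algebra, $\wh\CC_\e$ is a Lie algebra for \emph{every} $\e\in\Z$. Since $\wh\CC_\e$ is already a $\C$-vector space with the bilinear bracket descending from \eqref{eq:relaCe} (one checks, as noted just above the lemma, that $\mathrm{Im}\,(1\ot\partial+t^\e\frac{d}{dt}\ot 1)$ is a two-sided ideal, so the bracket on the quotient is well defined), the only thing to verify is antisymmetry and the Jacobi identity on $\wh\CC_\e$.

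The key step is to organize the bracket \eqref{eq:relaCe} so that $\e$ appears only through the operator $w^\e\frac{\p}{\p w}$ and the formal series $z^{\e-1}\delta(w/z)$, and then to exploit the fact that these satisfy the \emph{same} formal calculus identities as their $\e=0$ counterparts. Concretely, I would introduce, on a restricted module picture or directly on generating functions, the substitution that intertwines $a^{(\e)}(z)$ with $a^{(0)}(z)$: writing $a^{(\e)}(z)=\sum_n(t^n\ot a)z^{\e-n-1}$, the operator $w^\e\frac{\p}{\p w}$ acting on $z^{\e-1}\delta(w/z)$ behaves exactly as $w\frac{\p}{\p w}$ acts on $z^{-1}\delta(w/z)$ in the following precise sense: for all $r\ge 0$,
\begin{align*}
\(w^\e\frac{\p}{\p w}\)^r z^{\e-1}\delta\(\frac{w}{z}\)=\(-z^\e\frac{\p}{\p z}\)^r z^{\e-1}\delta\(\frac{w}{z}\),
\end{align*}
which is the same ``integration by parts'' rule that $z^{-1}\delta(w/z)$ obeys with ordinary derivatives. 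Using this, the verification that the bracket \eqref{eq:relaCe} on $\wh\CC_\e$ is antisymmetric reduces, term by term in the expansion against the $\delta$-function, to the conformal-algebra antisymmetry relation $a_nb=-\sum_{j\ge0}\frac{(-1)^{j+n}}{j!}\partial^j(b_{n+j}a)$ together with the relations \eqref{eq:conformalalg}; and the Jacobi identity for three elements $a,b,c$ reduces, after expanding $[[a^{(\e)}(z_1),b^{(\e)}(z_2)],c^{(\e)}(z_3)]$ and its cyclic permutations and collecting coefficients of the iterated $\delta$-functions $(w_2^\e\p_{w_2})^i(w_3^\e\p_{w_3})^j\,z_1^{\e-1}\delta(w_2/z_1)\,z_1^{\e-1}\delta(w_3/z_1)$ (modulo $\mathrm{Im}(1\ot\partial+t^\e\frac{d}{dt}\ot1)$), to the conformal-algebra Jacobi relation $a_m(b_nc)=\sum_j\binom{m}{j}(a_jb)_{m+n-j}c+b_n(a_mc)$. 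Because these are exactly the relations that already force $\wh\CC_0$ to be a Lie algebra, and the combinatorial identities among the $\e$-deformed $\delta$-functions are formally identical to those at $\e=0$, the same computation goes through verbatim with $w\p_w$ replaced by $w^\e\p_w$ and $z^{-1}\delta$ replaced by $z^{\e-1}\delta$.

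The main obstacle I anticipate is bookkeeping rather than conceptual: one must be careful that the quotient by $\mathrm{Im}(1\ot\partial+t^\e\frac{d}{dt}\ot 1)$ — rather than by $\mathrm{Im}(1\ot\partial+t\frac{d}{dt}\ot1)$ as in the $\e=0$ case — is exactly what is needed to make the $\e$-deformed versions of \eqref{eq:conformalalg} hold after applying $\rho_\e$, i.e. that $(\partial a)^{(\e)}(z)=-\,z^\e\frac{d}{dz}a^{(\e)}(z)$ in $\wh\CC_\e[[z,z^{-1}]]$; this is the $\e$-analogue of Lemma \ref{lem:actofd} and is precisely why the ideal is chosen with the $t^\e\frac{d}{dt}$ term. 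Once that identity is in hand, every place in the $\e=0$ proof where the relation ``$\partial\leftrightarrow -z\frac{d}{dz}$'' was used is replaced by ``$\partial\leftrightarrow -z^\e\frac{d}{dz}$'', and antisymmetry and Jacobi for $\wh\CC_\e$ follow. Alternatively, and perhaps more cleanly, one can invoke Proposition \ref{prop:vla2} in reverse: a faithful restricted $\wh\CC_\e$-module would realize the bracket inside $\mathrm{End}(W)[[z,z^{-1}]]$ via a $\phi_\e$-coordinated module over the vertex algebra built from $\CC$, and the Borcherds commutator formula \eqref{Borcherds} then shows the operators close into a Lie algebra; taking $W$ to be the ``vacuum'' $\wh\CC_\e$-module recovers $\wh\CC_\e$ itself, giving the Jacobi identity. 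I would present the direct $\delta$-function computation as the main argument and mention the module-theoretic shortcut as a remark.
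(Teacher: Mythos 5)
The paper itself offers no proof of this lemma: it is stated with a bare citation to \cite{P,Kac}, so the only thing to assess is whether your sketch would actually work. Your overall plan is the right one: take the equivalence ``$\CC$ is Lie conformal $\Leftrightarrow$ $\wh\CC_0$ is a Lie algebra'' from \cite{P,Kac}, and prove the ``furthermore'' clause by redoing the $\e=0$ computation with $\frac{\p}{\p w}$ and $z^{-1}\delta(w/z)$ replaced by $w^\e\frac{\p}{\p w}$ and $z^{\e-1}\delta(w/z)$. Your key formal identity $\(w^\e\frac{\p}{\p w}\)^rz^{\e-1}\delta\(\frac{w}{z}\)=\(-z^\e\frac{\p}{\p z}\)^rz^{\e-1}\delta\(\frac{w}{z}\)$ is correct (it is exactly what the paper uses in proving Lemma \ref{lem:u0v}). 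In coefficients the bracket \eqref{eq:relaCe} reads $[t^m\ot a,t^n\ot b]=\sum_{i\ge0}\frac{1}{i!}m^{(i)}_\e\,t^{m+n+i(\e-1)}\ot a_ib$, and skew-symmetry and Jacobi then reduce to Chu--Vandermonde-type identities for the step-$(\e-1)$ factorials $a^{(r)}_\e$ (in the spirit of \eqref{newton}); these do follow from the classical $\e=0$ identities by the affine rescaling $x\mapsto x/(1-\e)$ when $\e\ne1$ (with $\e=1$ a degenerate binomial case), so your assertion that the bookkeeping is ``formally identical'' is defensible, though it deserves at least one sentence of justification rather than the word ``verbatim''.

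Two concrete corrections. First, your derivation relation has the wrong sign: in $\wh\CC_\e$ one has $(\partial a)^{(\e)}(z)=+\,z^\e\frac{d}{dz}a^{(\e)}(z)$, not $-\,z^\e\frac{d}{dz}a^{(\e)}(z)$; indeed $\rho_\e(t^n\ot\partial a)=-n\,\rho_\e(t^{n+\e-1}\ot a)$, and reindexing gives the plus sign, which is also how the paper states the identity just before the proof of Propositions \ref{prop:vla1} and \ref{prop:vla2}. If that minus sign were carried into the skew-symmetry computation the terms would fail to cancel, so it is not merely cosmetic. Second, the proposed ``module-theoretic shortcut'' via Proposition \ref{prop:vla2} (or Proposition \ref{prop:vacc}) is circular: the very notion of a restricted $\wh\CC_\e$-module, and in particular the vacuum module obtained by induction, presupposes that $\wh\CC_\e$ is already a Lie algebra, and within this paper Proposition \ref{prop:vla2} is itself deduced from the present lemma. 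Since you relegate that route to a remark and keep the direct $\delta$-function computation as the main argument, the proof stands once the sign is fixed, but the remark should be deleted or reformulated.
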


Let $\CC$ be a Lie conformal algebra. Set $\widehat{\CC}_0^+=\rho_0(\C[t]\ot \CC)$, a subalgebra of $\wh{\CC}_0$.
Let $\C$ be the trivial $\widehat{\CC}_0^+$-module and form the induced $\wh{\CC}_0$-module
\[V_{\CC}=\mathcal{U}(\wh{\CC}_0)\ot_{\mathcal{U}(\wh{\CC}_0^+)}\C.\]
Set $\bm{1}=1\ot 1$ and identify $\CC$ as a subspace of $V_{\CC}$ with $a\mapsto \rho_0(t^{-1}\ot a)\bm{1}$ for $a\in \CC$.
It is known that there is a unique vertex algebra structure on $V_{\CC}$ with ${\bm 1}$ as the vacuum vector and with
$Y(a,z)=a^0(z)$ for $a\in \CC$ \cite{P}.
We say that a  $\widehat{\CC}_{\e}$-module $W$ is {\em restricted} if  $a^{(\e)}(z)\in \mathrm{Hom}(W,W((z)))$ for any $a\in\CC$.
The following result was from \cite{CLTW}.
\begin{prpt}\label{prop:vacc} Let $\CC$ be a Lie conformal algebra. Then the restricted $\widehat{\CC}_{\e}$-modules $W$ are exactly
the $\phi_\e$-coordinated $V_{\CC}$-modules $(W,Y_W^\e)$ with $a^{(\e)}(z)=Y_W^\e(a,z)$ for $a\in \CC$.
\end{prpt}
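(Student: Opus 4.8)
The plan is to prove the two directions of the equivalence separately, with the Borcherds-type commutator \eqref{Borcherds} and the canonical-derivation identity of Lemma \ref{lem:actofd} serving as the bridges between the two structures, and then to note that the two constructions are mutually inverse. First I would treat the forward implication. Given a $\phi_\e$-coordinated $V_\CC$-module $(W,Y_W^\e)$, I write $Y_W^\e(a,z)=\sum_{n\in\Z}a^{(\e)}(n)z^{\e-n-1}$ for $a\in\CC\subset V_\CC$ and take $a^{(\e)}(n)$ as the proposed action of $\rho_\e(t^n\ot a)$. Restrictedness is immediate, since $Y_W^\e(a,z)\in\mathrm{Hom}(W,W((z)))$ by the definition of a $\phi_\e$-coordinated module. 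Specializing \eqref{Borcherds} to $u=a$ and $v=b$ in $\CC$, and using that the nonnegative products $a_ib$ computed in $V_\CC$ coincide with the conformal-algebra products $a_ib$ by the construction of $V_\CC$ \cite{P}, recovers precisely the defining bracket \eqref{eq:relaLe} of $\wh{\CC}_\e$ specialized to \eqref{eq:relaCe}. The delicate point is that these operators must descend through the ideal $\mathrm{Im}(1\ot\partial+t^\e\frac{d}{dt}\ot 1)$ that defines $\wh{\CC}_\e$; since $\mathcal{D}a=\partial a$ in $V_\CC$, this is controlled by Lemma \ref{lem:actofd}, which dictates how the field attached to $\partial a$ is produced from that of $a$.

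Next I would treat the reverse implication. Given a restricted $\wh{\CC}_\e$-module $W$, the fields $a^{(\e)}(z)\in\mathrm{Hom}(W,W((z)))$ satisfy, by \eqref{eq:relaCe}, a commutator that is a finite sum of iterated $w^\e\frac{\p}{\p w}$-derivatives of $z^{\e-1}\delta(w/z)$; multiplying by a sufficiently large power of $(z_1-z_2)$ annihilates every delta-term, so any two of these fields satisfy the weak commutativity \eqref{Lcommutator}. I would then invoke the general theory of $\phi_\e$-coordinated modules of \cite{Li,BLP}: a set of pairwise weakly commutative fields on $W$ generates a vertex algebra $V$ for which $W$ is a $\phi_\e$-coordinated module, with $a\mapsto a^{(\e)}(z)$ respecting the nonnegative conformal products. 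By the universal property of $V_\CC$ as the enveloping vertex algebra of the Lie conformal algebra $\CC$ \cite{P}, this produces a vertex-algebra homomorphism $V_\CC\to V$, along which the $\phi_\e$-coordinated $V$-module structure pulls back to the desired $\phi_\e$-coordinated $V_\CC$-module structure on $W$ with $Y_W^\e(a,z)=a^{(\e)}(z)$ for $a\in\CC$.

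Since both passages are determined by the same data, the generating fields $a^{(\e)}(z)$ for $a\in\CC$, they are mutually inverse and the correspondence is a bijection. I expect the reverse implication to be the main obstacle: one must upgrade the pairwise weak commutativity supplied by \eqref{eq:relaCe} to the full weak-associativity axiom of Definition \ref{defcoor} and then identify the generated vertex algebra with $V_\CC$. This is where the $\phi_\e$-coordinated construction machinery of \cite{Li,BLP} together with Primc's universal property carry the argument, and where the $\e$-deformation is genuinely felt, since the $\e$-twisted derivation $w^\e\frac{\p}{\p w}$ governing $\wh{\CC}_\e$ must be reconciled with the canonical $z\frac{d}{dz}$ of Lemma \ref{lem:actofd} under the grading normalization of the fields.
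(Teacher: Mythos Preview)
The paper does not give a proof of this proposition; it is quoted from \cite{CLTW}. Your outline is the standard argument and is correct in both directions: the forward implication is exactly Borcherds' commutator \eqref{Borcherds} restricted to $a,b\in\CC$ together with the derivation identity to pass to the quotient defining $\wh{\CC}_\e$, and the reverse implication proceeds via weak $\phi_\e$-locality of the fields $a^{(\e)}(z)$, Li's conceptual construction of a vertex algebra from a $\phi_\e$-local family \cite{Li,BLP}, and the universal property of the enveloping vertex algebra $V_\CC$ \cite{P}. This is presumably how the argument runs in \cite{CLTW} as well.

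One clarification on the issue you flag at the end: the ``reconciliation'' between $w^\e\frac{\p}{\p w}$ and the $z\frac{d}{dz}$ of Lemma \ref{lem:actofd} is not a genuine obstacle but an artifact of what appears to be a misprint in that lemma. The correct identity for $\phi_\e$-coordinated modules is $Y_W^\e(\mathcal{D}v,z)=z^\e\frac{d}{dz}Y_W^\e(v,z)$, and indeed this is the form the paper itself applies in the proofs of Proposition \ref{VhLandhatp} and Theorem \ref{thm3}. With this in hand, the relation $(\partial a)^{(\e)}(z)=z^\e\frac{d}{dz}a^{(\e)}(z)$ holding in $\wh{\CC}_\e$ matches the vertex-algebra side on the nose, and no further grading normalization is required.
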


\textbf{Proof of Propositions \ref{prop:vla1} and \ref{prop:vla2}.}
Associated to the vertex Lie algebra $\mathcal{L}$, we introduce  a vector space  as follows:
\begin{align*}
{\mathcal{CL}}=\(\C[\partial]\ot \mathcal{A}\)\oplus \mathcal{Z}.
\end{align*}
Endow $\mathcal{CL}$ with a $\C[\partial]$-module structure such that
$\partial(\partial^m\ot a)=\partial^{m+1}\ot a$ and $\partial(c)=0$
for $m\in \N, a\in \mathcal{A}$ and $c\in \mathcal{Z}$.
Furthermore, define $j$-products $(j\in \N)$ on $\mathcal{CL}$ such that
\[a_jb=a_{(j)}b+\sum_{i\ge 0} \partial^i(a_{(i,j)}b),\quad a_jc=c_ja=c_jc'=0\]
for $a,b\in \mathcal{A}$ and $c,c'\in \mathcal{Z}$, and then extend them to the whole space
$\mathcal{CL}$ via the rule \eqref{eq:conformalalg}.
This gives a conformal algebra structure on $\mathcal{CL}$.
Note that in $\wh{\mathcal{CL}}_\e$ we have $(\partial a)^{(\e)}(z)=z^\e\frac{\p}{\p z}a^{(\e)}(z)$ for $a\in \mathcal{A}$ and
$(\partial c)^{(\e)}(z)=z^\e\frac{\p}{\p z}c^{(\e)}(z)=0$ for $c\in \mathcal{Z}$.
This implies that  there is a linear isomorphism from $\wh{\mathcal{CL}}_\e$ to $\mathcal{L}_\e$ such that
\begin{align}
\rho_\e(t^n\ot a)\mapsto a^{(\e)}(n),\quad \rho_\e(t^{\e-1}\ot c)\mapsto c^{(\e)}\end{align}
for $n\in \Z, a\in \mathcal{A}$ and $c\in \mathcal{Z}$.
Furthermore, under this linear isomorphism, the multiplication \eqref{eq:relaCe} on $\wh{\mathcal{CL}}_\e$
coincides with the multiplication \eqref{eq:relaLe} on $\mathcal{L}_\e$.

Since $\mathcal{L}_0=\mathcal{L}$ is a Lie algebra,  $\wh{\mathcal{CL}}_0$ is  a Lie algebra as well.
This together with
 Lemma \ref{lem:lca} gives that $\mathcal{CL}$ is a Lie conformal algebra and $\wh{\mathcal{CL}}_\e$ is a Lie algebra.
Thus  $\mathcal{L}_\e$ is  a Lie algebra, which proves the Proposition \ref{prop:vla1}.
For the Proposition \ref{prop:vla2}, by identifying $\wh{\mathcal{CL}}_0$ with $\mathcal{L}$, one can easily check that
the vertex algebra $V_{\mathcal{L}}(\gamma)$ is isomorphic to the quotient vertex algebra of $V_{\mathcal{CL}}$
modulo the ideal generated by $c-\gamma(c)$ for $c\in \mathcal{Z}$.
According to Proposition \ref{prop:vacc}, $\phi_\e$-coordinated $V_{\mathcal{CL}}$-modules are exactly restricted
$\mathcal{L}_\e$-modules, and hence $\phi_\e$-coordinated $V_{\mathcal{L}}(\gamma)$-modules are exactly
restricted
$\mathcal{L}_\e$-modules of level $\gamma$.
This finishes the proof of proposition \ref{prop:vla2}.

\section{Proof of theorem \ref{mainth}}\label{sec6}

Here we finish the proof of Theorem \ref{mainth}.

\subsection{$\phi_\e$-coordinated modules for certain vertex algebras}

In this subsection we study the $\phi_\e$-coordinated modules for three types of vertex algebras.

First note  that the derived subalgebra of $\mathcal{D}^\e$ is
\[[\mathcal{D}^\e,\mathcal{D}^\e]=\te{Span}_\C\{\tilde{\rd}_{m,n}^\e\mid m,n\in \Z\}.\]
Set
\begin{align*}
\wh{\ft}(\fg)^\e=\ft(\fg)+[\mathcal{D}^\e,\mathcal{D}^\e]+\C t_0^{\e-1}\rd_1,
\end{align*}
which is a subalgebra of $\wt{\ft}(\fg)^\e$ such that $\wt{\ft}(\fg)^\e=\wh{\ft}(\fg)^\e\oplus \C t_0^{\e-1}\rd_0$.
Recall the space $\mathcal{B}_{\fg}$ defined in \eqref{defag}.
We define a linear isomorphism $\theta_\e:\(\C[t,t^{-1}]\ot \mathcal{B}_\fg\)\oplus \C\rk_0\rightarrow \wh{\ft}(\fg)^\e$ as follows:
\begin{align}
t^n\ot u\mapsto t_0^nu,\ t^n\ot \rD_m\mapsto \rd_{n+\e-1,m},\ t^n\ot \rK_m\mapsto
\rK_{n-\e+1,m},\ \rk_0\mapsto \rk_0
\end{align}
for $u\in \wt\fg_1, n\in \Z$ and  $m\in \Z^\times$.

In view of Proposition \ref{prop:hgcomm}, we see that the quadruple
\begin{align*}
(\mathcal{L}=\wh{\ft}(\fg)^0,\mathcal{A}=\mathcal{B}_{\fg},\mathcal{Z}=\C\rk_0,\eta=\theta_0)
\end{align*}
is a vertex Lie algebra.
Furthermore, the map $\eta_\e\circ \theta_\e^{-1}$ (see \eqref{eq:etae}) is a Lie isomorphism from $\wh{\ft}(\fg)^\e$ to $\wh{\ft}(\fg)^0_\e$.
Then for any complex number $\ell$, we have a vertex algebra $V_{\wh{\ft}(\fg)^0}(\gamma_\ell)$,
where $\gamma_\ell\in (\C\rk_0)^*$ defined by $\gamma_\ell(\rk_0)=\ell$.
We say that a $\wh{\ft}(\fg)^\e$-module $W$ is {\em restricted} if for any $a\in \mathcal{B}_{\fg}$, $a^\e(z)\in \mathrm{Hom}(W,W((z)))$,
and is {\em of level $\ell\in \C$} if $\rk_0$ acts as the scalar $\ell$ on $W$.
Then by Proposition \ref{prop:vla2} and the Lie isomorphism $\eta_\e\circ \theta_\e^{-1}:\wh{\ft}(\fg)^\e\rightarrow \wh{\ft}(\fg)^0_\e$,
we immediately have the following result.

\begin{prpt}\label{prop:phie1} For any $\ell\in \C$,
the restricted $\wh{\ft}(\fg)^{\e}$-modules of level $\ell$ are exactly the $\phi_\e$-coordinated
$V_{\wh{\ft}(\fg)^0}(\gamma_\ell)$-modules $(W,Y_W^\e)$ with
$Y_W^\e(a,z)=a^\e(z)$
for $a\in \mathcal{B}_{\fg}$.
\end{prpt}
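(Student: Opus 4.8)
The plan is to read the statement off Proposition~\ref{prop:vla2} after transporting along the Lie isomorphism $\eta_\e\circ\theta_\e^{-1}\colon\wh\ft(\fg)^\e\to\wh\ft(\fg)^0_\e=\mathcal L_\e$ constructed above. Three facts are already in place: the quadruple $(\mathcal L,\mathcal A,\mathcal Z,\eta)=(\wh\ft(\fg)^0,\mathcal B_\fg,\C\rk_0,\theta_0)$ is a vertex Lie algebra (this follows from Proposition~\ref{prop:hgcomm} at $\e=0$); $\eta_\e\circ\theta_\e^{-1}$ is a Lie isomorphism onto $\mathcal L_\e$; and $V_{\mathcal L}(\gamma_\ell)=V_{\wh\ft(\fg)^0}(\gamma_\ell)$ by definition. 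So all that is left is to match, under $\eta_\e\circ\theta_\e^{-1}$, the three pieces of data that enter the statement: the generating series, the adjective ``restricted'', and the level.

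For the generating series I would check that $\eta_\e\circ\theta_\e^{-1}$ carries $a^\e(z)$ to the canonical series $a^{(\e)}(z)$ of $\mathcal L_\e$ (see \eqref{eq:etae}, \eqref{eq:relaLe}) for each $a\in\mathcal B_\fg$. For $u\in\wt\fg_1$ this is immediate from $\theta_\e(t^n\ot u)=t_0^nu$, which sends $u^\e(z)=\sum_n(t_0^nu)z^{\e-n-1}$ to $\sum_n u^{(\e)}(n)z^{\e-n-1}=u^{(\e)}(z)$. For $\rK_m$ and $\rD_m$ with $m\in\Z^\times$, the index shifts built into $\theta_\e$, namely $t^n\ot\rK_m\mapsto\rk_{n-\e+1,m}$ and $t^n\ot\rD_m\mapsto\rd^\e_{n+\e-1,m}$, are chosen exactly so that the exponents $z^{-n}$ in $\rK_m^\e(z)=\sum_n\rk_{n,m}z^{-n}$ and $z^{2\e-n-2}$ in $\rD_m^\e(z)=\sum_n\rd^\e_{n,m}z^{2\e-n-2}$ turn into $z^{\e-n-1}$ after a one-line change of summation variable, so that again $\rK_m^\e(z)\mapsto\rK_m^{(\e)}(z)$ and $\rD_m^\e(z)\mapsto\rD_m^{(\e)}(z)$. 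That the commutators of Proposition~\ref{prop:hgcomm} then go over to the defining relations \eqref{eq:relaLe} of $\mathcal L_\e$ is nothing but the isomorphism statement already recorded.

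Finally, a $\wh\ft(\fg)^\e$-module $W$ is restricted iff $a^\e(z)\in\Hom(W,W((z)))$ for all $a\in\mathcal B_\fg$, which by the previous paragraph is verbatim the restrictedness condition for the associated $\mathcal L_\e$-module; and since $\eta_\e\circ\theta_\e^{-1}$ carries the central element $\rk_0$ to $\rk_0^{(\e)}$, the module has level $\ell$ (i.e.\ $\rk_0$ acts by $\ell$) iff it has level $\gamma_\ell$ in the sense of Section~\ref{sec4}, because $\gamma_\ell(\rk_0)=\ell$. Applying Proposition~\ref{prop:vla2} to $(\mathcal L,\mathcal A,\mathcal Z,\eta)$ and $\gamma=\gamma_\ell$ then identifies the restricted level-$\gamma_\ell$ $\mathcal L_\e$-modules with the $\phi_\e$-coordinated $V_{\wh\ft(\fg)^0}(\gamma_\ell)$-modules $(W,Y_W^\e)$ satisfying $Y_W^\e(a,z)=a^{(\e)}(z)$ for $a\in\mathcal A=\mathcal B_\fg$, and pulling back along $\eta_\e\circ\theta_\e^{-1}$ yields the proposition. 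I do not expect a genuine obstacle: the setup of this subsection has been arranged so that the statement is a formal consequence, and the only point deserving care is the index bookkeeping of the second paragraph, i.e.\ confirming that the grading shifts in $\theta_\e$ turn each $a^\e(z)$ into the universal generating series $a^{(\e)}(z)$ of the deformed vertex Lie algebra.
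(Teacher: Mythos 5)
Your proposal is correct and follows exactly the route the paper takes: it records that $(\wh\ft(\fg)^0,\mathcal B_\fg,\C\rk_0,\theta_0)$ is a vertex Lie algebra, that $\eta_\e\circ\theta_\e^{-1}$ is a Lie isomorphism onto $\wh\ft(\fg)^0_\e$, and then declares the proposition an immediate consequence of Proposition \ref{prop:vla2}. Your explicit check that the index shifts in $\theta_\e$ convert $u^\e(z)$, $\rK_m^\e(z)$ and $\rD_m^\e(z)$ into the canonical series $a^{(\e)}(z)$ of the deformed algebra, and that restrictedness and level match under the isomorphism, is exactly the bookkeeping the paper leaves implicit.
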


Next, recall the affine-Virasoro algebra $\wh\fg\rtimes \mv$ defined in Section 3.
Form the vector spaces $\mathcal{A}=\fg\oplus \C \omega_{\fg}$ and $\mathcal{Z}=\C\rk\oplus \C \rk_{\mv}$. Define
a linear bijection
\[\eta:\(\C[t,t^{-1}]\ot \mathcal{A}\)\oplus \mathcal{Z}\rightarrow \wh\fg\rtimes \mv\] by letting
\begin{align*}
t^n\ot u\mapsto u(n),\quad t^n\ot \omega_{\fg}\mapsto L(n-1),\quad \rk\mapsto \rk,\quad \rk_{\mv}\mapsto \rk_{\mv}
\end{align*}
for $n\in \Z$, $u\in \fg$.
Note that in terms of generating functions
\begin{align*}
u(z)=\sum_{n\in \Z} u(n) z^{-n-1}\ (u\in \fg)\quad \text{and}\quad \omega_\fg(z)=\sum_{n\in \Z}\omega_\fg(n)z^{-n-1}
=\sum_{n\in \Z}L(n)z^{-n-2},
\end{align*}
the Lie relations in \eqref{eq:relaaffvir}  can be rewritten as follows:
\begin{equation}\begin{split}
[\omega_{\fg}(z),\omega_{\fg}(w)]&=\(\frac{\p}{\p w}\omega_\fg(w)\)z^{-1}\delta\(\frac{w}{z}\)
+2\omega_\fg(w)\frac{\p}{\p w}z^{-1}\delta\(\frac{w}{z}\)+\frac{1}{12}\rk_{\mv}\(\frac{\p}{\p w}\)^3z^{-1}\delta\(\frac{w}{z}\),\\
[u(z),v(w)]&=[u,v](w)z^{-1}\delta\(\frac{w}{z}\)+\<u,v\>\rk\frac{\p}{\p w}z^{-1}\delta\(\frac{w}{z}\),\\
[\omega_{\fg}(z),u(w)]&=\(\frac{\p}{\p w}u(w)\)z^{-1}\delta\(\frac{w}{z}\)+u(w)\frac{\p}{\p w}z^{-1}\delta\(\frac{w}{z}\)
\end{split}
\end{equation}
for $u,v\in \fg$.
This implies that the quadruple
\[(\mathcal{L}=\wh\fg\rtimes \mv, \mathcal{A}=\fg\oplus \C \omega_{\fg}, \mathcal{Z}=\C\rk\oplus \C \rk_{\mv}, \eta)\]
is a vertex Lie algebra.

For $\ell,c\in \C$, denote by $\gamma_{\ell,c}$ the linear functional on $\mathcal{Z}^*$ such that
$\gamma_{\ell,c}(\rk)=\ell$ and $\gamma_{\ell,c}(\rk_{\mv})=c$.
Then we have the (universal) affine-Virasoro vertex algebra
\[V_{\wh\fg\rtimes \mv}(\ell,c):=V_{\wh\fg\rtimes \mv}(\gamma_{\ell,c}),\] which is equipped with
a conformal vector $\omega_{\fg}$.
We say that a $\wh\fg\rtimes \mv$-module $W$ is {\em restricted} if for any $w\in W, a\in \fg$, one has $a(n)w=0=L(n)w$ for $n\gg 0$,
is {\em of level $\ell\in \C$} if $\rk$ acts as the scalar $\ell$, and is {\em of central charge $c$ } if $\rk_{\mv}$ acts as the
scalar $c$.
Recall the generating functions $a^\e(z)$ for $a\in \fg$ and $L^\e(z)$ defined in \eqref{eq:affvirgene1} and \eqref{eq:affvirgene2}, respectively.
Then we have:

\begin{prpt}\label{cocor} For $\ell,c\in \C$,
the restricted $\wh\fg\rtimes \mv$-modules $W$ of level $\ell$ and central charge $c$ are exactly the $\phi_\e$-coordinated
$V_{\wh\fg\rtimes \mv}(\ell,c)$-modules $(W,Y_W^\e)$ with
\begin{align}\label{actgvironw}
Y_W^\e(a,z)=a^\e(z), \forall a\in\fg; \quad Y_W^\e(\omega_\fg,z)=L_\fg^\e(z).
\end{align}

\end{prpt}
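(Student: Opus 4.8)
The plan is to follow the proof of Proposition~\ref{prop:phie1}. We have checked that $(\mathcal{L}=\wh\fg\rtimes\mv,\,\mathcal{A}=\fg\oplus\C\omega_\fg,\,\mathcal{Z}=\C\rk\oplus\C\rk_{\mv},\,\eta)$ is a vertex Lie algebra with $V_{\wh\fg\rtimes\mv}(\ell,c)=V_{\wh\fg\rtimes\mv}(\gamma_{\ell,c})$, so Proposition~\ref{prop:vla2} tells us that the restricted $\mathcal{L}_\e$-modules of level $\gamma_{\ell,c}$ are exactly the $\phi_\e$-coordinated $V_{\wh\fg\rtimes\mv}(\ell,c)$-modules $(W,Y_W^\e)$ with $Y_W^\e(a,z)=a^{(\e)}(z)$ for $a\in\mathcal{A}$. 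Thus it suffices to exhibit a Lie algebra isomorphism $\sigma\colon\mathcal{L}_\e\xrightarrow{\sim}\wh\fg\rtimes\mv$ carrying $u^{(\e)}(z)$ to $u^\e(z)$ for $u\in\fg$ and $\omega_\fg^{(\e)}(z)$ to $L_\fg^\e(z)$; given such a $\sigma$, a restricted $\mathcal{L}_\e$-module of level $\gamma_{\ell,c}$ is the same datum as a restricted $\wh\fg\rtimes\mv$-module of level $\ell$ and central charge $c$, and \eqref{actgvironw} is just the translation of $Y_W^\e(a,z)=a^{(\e)}(z)$ along $\sigma$.

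I would define $\sigma$ on the basis $\{u^{(\e)}(n),\ \omega_\fg^{(\e)}(n),\ \rk^{(\e)},\ \rk_{\mv}^{(\e)}\}$ of $\mathcal{L}_\e$ by
\begin{align*}
u^{(\e)}(n)&\mapsto u(n)\ \ (u\in\fg),\qquad \omega_\fg^{(\e)}(n)\mapsto L(n+\e-1)+\delta_{n,1-\e}\tfrac{\e^2-2\e}{24}\rk_{\mv},\\
\rk^{(\e)}&\mapsto\rk,\qquad \rk_{\mv}^{(\e)}\mapsto\rk_{\mv}.
\end{align*}
Since $n\mapsto n+\e-1$ is a bijection of $\Z$, this $\sigma$ is a linear bijection for every $\e\in\Z$. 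A direct computation with generating functions shows that $\sigma$ sends $u^{(\e)}(z)=\sum_{n}u^{(\e)}(n)z^{\e-n-1}$ to $u^\e(z)$ (cf.~\eqref{eq:affvirgene1}) and $\omega_\fg^{(\e)}(z)=\sum_{n}\omega_\fg^{(\e)}(n)z^{\e-n-1}$ to $\sum_{n}L(n)z^{2\e-n-2}+\tfrac{\e^2-2\e}{24}z^{2(\e-1)}\rk_{\mv}=L_\fg^\e(z)$ (cf.~\eqref{eq:affvirgene2}); the $\delta$-term in the definition of $\sigma$ is precisely the anomaly that upgrades $\sum_n L(n)z^{2\e-n-2}$ to $L_\fg^\e(z)$.

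It remains to check that $\sigma$ respects brackets, i.e.\ that $u^\e(z)$ and $L_\fg^\e(z)$ satisfy in $\wh\fg\rtimes\mv$ the $\e$-deformed relations \eqref{eq:relaLe}, where the structure constants are read off from the generating-function form of \eqref{eq:relaaffvir} recorded just before the proposition (so $(\omega_\fg)_{(1,0)}\omega_\fg=\omega_\fg$, $(\omega_\fg)_{(0,1)}\omega_\fg=2\omega_\fg$, $(\omega_\fg)_{(3)}\omega_\fg=\tfrac12\rk_{\mv}$, $(\omega_\fg)_{(1,0)}u=(\omega_\fg)_{(0,1)}u=u$, $u_{(0,0)}v=[u,v]$, $u_{(1)}v=\<u,v\>\rk$). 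The identities $[u^\e(z),v^\e(w)]=[u,v]^\e(w)\,\zdwz+\<u,v\>\rk\,(\wpw)\zdwz$ and $[L_\fg^\e(z),u^\e(w)]=\bigl(\wpw u^\e(w)\bigr)\zdwz+u^\e(w)(\wpw)\zdwz$ are routine $\delta$-function manipulations (the central part of $L_\fg^\e$ commutes with everything). The main obstacle is the Virasoro relation
\begin{align*}
[L_\fg^\e(z),L_\fg^\e(w)]=&\bigl(\wpw L_\fg^\e(w)\bigr)\zdwz+2L_\fg^\e(w)(\wpw)\zdwz\\
&+\tfrac{1}{12}\rk_{\mv}(\wpw)^3\zdwz.
\end{align*}
Expanding the left-hand side with \eqref{eq:relaaffvir} yields the central contribution $\tfrac{1}{12}\rk_{\mv}\sum_{m}(m^3-m)z^{2\e-m-2}w^{m+2\e-2}$, whereas $\tfrac{1}{12}\rk_{\mv}(\wpw)^3\zdwz=\tfrac{1}{12}\rk_{\mv}\sum_{m}m\bigl(m^2-(\e-1)^2\bigr)z^{2\e-m-2}w^{m+2\e-2}$; the discrepancy $\tfrac{\e^2-2\e}{12}\rk_{\mv}\sum_{m}m\,z^{2\e-m-2}w^{m+2\e-2}$ is produced exactly by the $\tfrac{\e^2-2\e}{24}\rk_{\mv}z^{2(\e-1)}$ summand of $L_\fg^\e(w)$ feeding into the ``non-central'' terms $\wpw L_\fg^\e(w)$ and $2L_\fg^\e(w)(\wpw)$ on the right-hand side. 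Carrying out this cancellation---equivalently, verifying that $L_\fg^\e(z)$, correction term included, satisfies the $\e$-deformed Virasoro relation, a fact that can also be extracted from \cite{BLP}---completes the proof that $\sigma$ is a Lie algebra isomorphism, and the proposition then follows from Proposition~\ref{prop:vla2}.
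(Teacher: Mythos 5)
Your proposal is correct and follows essentially the same route as the paper: the paper's proof also invokes Proposition \ref{prop:vla1} to form $(\wh\fg\rtimes\mv)_\e$, exhibits exactly the isomorphism you write down (with $\omega_\fg^{(\e)}(n)\mapsto L(n+\e-1)+\delta_{n+\e-1,0}\tfrac{\e^2-2\e}{24}\rk_{\mv}$, which is your $\delta_{n,1-\e}$ term), and concludes by Proposition \ref{prop:vla2}. The only difference is that the paper declares the isomorphism check "straightforward," whereas you carry out the central-term cancellation in the Virasoro bracket explicitly, and your computation of the discrepancy $\tfrac{\e^2-2\e}{12}\sum_m m\,z^{2\e-m-2}w^{m+2\e-2}\rk_{\mv}$ is correct.
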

\begin{proof} By Proposition \ref{prop:vla1} we have an $\e$-deformation Lie algebra
\[(\wh\fg\rtimes \mv)_\e=\text{Span}_\C\{ u^{(\e)}(n), \omega_\fg^{(\e)}(n), \rk^{(\e)}, \rk_{\mv}^{(\e)}\mid u\in \fg, n\in \Z\}\]
of $\wh\fg\rtimes \mv$ with the Lie brackets given by
\begin{equation*}\begin{split}
[\omega_{\fg}^{(\e)}(z),\omega_{\fg}^{(\e)}(w)]&=\(w^\e\frac{\p}{\p w}\omega_\fg^{(\e)}(w)\)z^{\e-1}\delta\(\frac{w}{z}\)
+2\omega_\fg^{(\e)}(w)w^\e\frac{\p}{\p w}z^{\e-1}\delta\(\frac{w}{z}\)\\
&\ \ +\frac{1}{12}\rk_{\mv}^{(\e)}\(w^\e\frac{\p}{\p w}\)^3z^{\e-1}\delta\(\frac{w}{z}\),\\
[u^{(\e)}(z),v^{(\e)}(w)]&=[u,v]^{(\e)}(w)z^{\e-1}\delta\(\frac{w}{z}\)+\<u,v\>\rk^{(\e)}w^\e\frac{\p}{\p w}z^{\e-1}\delta\(\frac{w}{z}\),\\
[\omega_{\fg}^{(\e)}(z),u^{(\e)}(w)]&=\(w^\e\frac{\p}{\p w}u^{(\e)}(w)\)z^{\e-1}\delta\(\frac{w}{z}\)+u^{(\e)}(w)w^\e\frac{\p}{\p w}z^{\e-1}\delta\(\frac{w}{z}\)
\end{split}
\end{equation*}
for $u,v\in \fg$.

It is straightforward to check that  the linear map
\[
u^{(\e)}(n)\mapsto u(n),\ \omega_\fg^{(\e)}(n)\mapsto L(n+\e-1)+\delta_{n+\e-1,0}\frac{\e^2-2\e}{24}\rk_{\mv},\
\rk^{(\e)}\mapsto \rk,\ \rk_{\mv}^{(\e)}\mapsto \rk_{\mv}
\]
where $u\in \fg$ and $n\in \Z$, is a Lie algebra isomorphism from $(\wh\fg\rtimes \mv)_\e$ to $\wh\fg\rtimes \mv$.
 Then the assertion  follows immediately from Proposition \ref{prop:vla2}.
\end{proof}

Finally, recall from Section 3.2 that we have an $\wh\fh$-module
\[V_{\wh\fh}(\ell,e^{\alpha\bfk}\C[L])=V_{\wh\fh}(\ell,0)\ot e^{\alpha\bfk}\C[L].\]
When $\ell=1$ and $\alpha=0$, it is known that there is a vertex algebra structure on
\[V_{(\fh,L)}:=V_{\wh\fh}(1,\C[L])=V_{\wh\fh}(1,0)\ot \C[L]\]
 with $Y(h,z)=h(z)$ for $h\in \fh$ and
$Y(e^\gamma,z)=E^{\gamma}(z)$ (see \cite{B1} or \cite{LW} for example).
Note that $\omega_{\fh}=\bfk_{-1}\bfd$ is a conformal vector of $V_{(\fh,L)}$ of rank $2$, which is
also a conformal vector of the Heisenberg vertex subalgebra $V_{\wh\fh}(1,0)$.

View $\fh$ and $\C[L]$ as abelian Lie algebras and let $\fh$ act on $\C[L]$ by derivations with
\[h e^\gamma=\<h,\gamma\>e^\gamma\]
for $h\in H, \gamma\in L$. Form the semiproduct Lie algebra
\[\mathfrak{p}=\fh\ltimes \C[L],\]
on which we have
\[[h,h']=0=[e^\gamma,e^{\gamma'}],\quad [h,e^\gamma]=\<h,\gamma\>e^{\gamma}\]
for $h,h'\in \fh, \gamma,\gamma'\in L$.
Extend the form $\<\cdot,\cdot\>$ on $\fh$ (see \eqref{formonh}) to $\mathfrak{p}$ by letting $\<\mathfrak{p},\C[L]\>=0$, which is still
symmetric and invariant.
Recall that $\wh\fp$ is the affine Lie algebra
associated to the pair $(\fp,\<\cdot,\cdot\>)$,
which is obviously  a vertex Lie algebra with $\mathcal{A}=\fp$ and $\mathcal{Z}=\C\rk$.
Take $\gamma\in (\C\rk)^*$ such that $\gamma(\rk)=1$.
Then we have the universal affine vertex algebra
\[V_{\wh\fp}(1,0):=V_{\wh\fp}(\gamma).\]
The following relation between the vertex algebras $V_{(\fh,L)}$  and $V_{\wh\fp}(1,0)$ was proved in \cite{LW}.

\begin{lemt}\label{vhLandvp10} The vertex algebra $V_{(\fh,L)}$
is isomorphic to the quotient vertex algebra of $V_{\wh\fp}(1,0)$ modulo the ideal  generated by
the set
\begin{align}\label{genvhL}
\{e^0-{\bm 1},\ e^\gamma e^\lambda-e^{\gamma+\lambda},\ \mathcal{D}e^{\lambda}-\lambda(-1)e^{\lambda}\mid  \gamma,\lambda\in L\}.\end{align}
\end{lemt}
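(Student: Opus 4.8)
The plan is to exhibit an explicit surjective vertex algebra homomorphism $\Psi\colon V_{\wh\fp}(1,0)\to V_{(\fh,L)}$, check that it annihilates the ideal $I$ generated by the set \eqref{genvhL}, and then prove that the induced map is injective by matching bigraded dimensions.

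\textbf{Construction of $\Psi$ and that it kills $I$.} Since $V_{\wh\fp}(1,0)$ is the universal affine vertex algebra of $(\fp,\<\cdot,\cdot\>)$ at level $1$, to get a homomorphism out of it I only need a linear map $\fp\to V_{(\fh,L)}$ whose associated fields satisfy the level-$1$ $\wh\fp$-relations. Sending $h\mapsto h$ for $h\in\fh$ and $e^\gamma\mapsto e^\gamma$ for $\gamma\in L$, the relevant fields are $Y(h,z)=h(z)$ and $Y(e^\gamma,z)=E^\gamma(z)$, and the relations to verify are: the rank-two Heisenberg relations for $\fh$ (immediate, as $\rk$ acts by $1$); $[h(z),E^\gamma(w)]=\<h,\gamma\>E^\gamma(w)z^{-1}\delta(w/z)$, which is the standard lattice computation and matches $[h,e^\gamma]=\<h,\gamma\>e^\gamma$ together with $\<h,e^\gamma\>=0$; and $[E^\gamma(z),E^\lambda(w)]=0$, matching $[e^\gamma,e^\lambda]=0$ and $\<e^\gamma,e^\lambda\>=0$. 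The vanishing of this last bracket is exactly where the isotropy $\<\bfk,\bfk\>=0$ of $L$ is used, since the power of $z-w$ occurring in a lattice-type vertex operator commutator is $\<\gamma,\lambda\>=0$. Thus $\Psi$ exists; it is surjective because its image is a vertex subalgebra containing $\bfk=\bfk(-1){\bm 1}$, $\bfd=\bfd(-1){\bm 1}$ and $e^\gamma=(E^\gamma)_{-1}{\bm 1}$, hence all vectors $\bfk(-m_1)\cdots\bfd(-n_1)\cdots e^\gamma$, which span $V_{(\fh,L)}$. Finally, $\Psi$ annihilates each generator of $I$: $\Psi(e^0)=(E^0)_{-1}{\bm 1}={\bm 1}$; $\Psi(e^\gamma_{-1}e^\lambda)=(E^\gamma)_{-1}e^\lambda=e^{\gamma+\lambda}$, using that $\gamma(m)e^\lambda=0$ for $m\ge 0$ (isotropy again) and $e^\gamma e^\lambda=e^{\gamma+\lambda}$ in $\C[L]$; and, because $\ell=1$, $\Psi(\mathcal{D}e^\lambda)=(E^\lambda)_{-2}{\bm 1}=\lambda(-1)e^\lambda=\Psi(\lambda(-1)e^\lambda)$. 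Hence $\Psi$ factors through a surjection $\overline\Psi\colon V_{\wh\fp}(1,0)/I\to V_{(\fh,L)}$.

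\textbf{Injectivity via bigraded dimensions.} Equip both sides with the bigrading by charge (the $\bfd(0)$-eigenvalue) and by weight, where on $V_{(\fh,L)}$ the weight is the conformal weight for $\omega_\fh=\bfk_{-1}\bfd$ and on $V_{\wh\fp}(1,0)$ a mode $x(-n)$ with $x\in\fh$ has weight $n$ while $e^\gamma(-n)$ has weight $n-1$. One checks that this weight is well defined on $V_{\wh\fp}(1,0)$ precisely because $\<\bfk,\bfk\>=0$ forces the $\C[L]$-modes to commute with the $\bfk$-modes and with one another; that the generators of $I$ are bihomogeneous; and that $\Psi$ (hence $\overline\Psi$) is bigraded. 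On the target, $V_{(\fh,L)}=V_{\wh\fh}(1,0)\ot\C e^{k\bfk}$ in charge $k$, so its $(k,N)$-component has dimension $p_2(N)$, the number of pairs of partitions of total size $N$, with basis $\{\bfk(-\vec m)\bfd(-\vec n)e^{k\bfk}\mid |\vec m|+|\vec n|=N\}$. It therefore suffices to show that $V_{\wh\fp}(1,0)/I$ in charge $k$ and weight $N$ is spanned by the images of these same monomials. Starting from the PBW basis of $V_{\wh\fp}(1,0)$ taken with all $\fh$-modes to the left of all $\C[L]$-modes, and using the universal identity $[\mathcal{D},a(n)]=-n\,a(n-1)$ together with the relations $\mathcal{D}e^\lambda=\lambda(-1)e^\lambda$ and $e^\gamma_{-1}e^\lambda=e^{\gamma+\lambda}$ in the quotient, I would prove, by induction on the number of $\C[L]$-modes and then on their weights, that modulo $I$ each monomial $e^{j_1\bfk}(-p_1)\cdots e^{j_r\bfk}(-p_r){\bm 1}$ lies in $M_\bfk\cdot e^{k\bfk}$ with $k=\sum_i j_i$, where $M_\bfk$ denotes the span of the monomials in the $\bfk(-s)$. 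Feeding this back in and commuting the remaining $\fh$-modes through $M_\bfk$ to act directly on $e^{k\bfk}$ (legitimate since $[\bfd(-a),\bfk(-s)]=0$ for $a,s\ge 1$) rewrites the original monomial as a linear combination of elements $\bfk(-\vec m)\bfd(-\vec n)e^{k\bfk}$ of the required bidegree. This yields $\dim(V_{\wh\fp}(1,0)/I)^{(k,N)}\le p_2(N)$, so the surjection $\overline\Psi$ is an isomorphism in each bidegree, hence an isomorphism.

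\textbf{Expected main obstacle.} The delicate step is the inductive reduction of $\C[L]$-mode monomials modulo $I$: one must organise the induction so that each application of $\mathcal{D}$ or of the commutator identity strictly decreases a well-chosen complexity, and one must invoke the isotropy of $L$ repeatedly to keep the $\C[L]$-modes commuting with one another and with the $\bfk$-modes. Everything else is either the universal property of affine vertex algebras or routine lattice-vertex-operator bookkeeping.
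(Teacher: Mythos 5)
The paper does not actually prove this lemma: it is quoted from \cite{LW}, where the analogous statement is established for vertex algebras attached to even lattices, so there is no internal proof to compare against. Your self-contained argument (explicit surjection $\Psi$ from the universal affine vertex algebra, verification that it kills the three families of relations, then injectivity by bounding the bigraded dimension of the quotient by $p_2(N)$) is sound, and it is essentially the standard way such presentations are verified. Two small points of precision. First, the well-definedness of the weight grading on $V_{\wh\fp}(1,0)$ (with $\deg h(-n)=n$, $\deg e^\gamma(-n)=n-1$) follows from $\<\fh,\C[L]\>=0=\<\C[L],\C[L]\>$ together with $[e^\gamma,e^\lambda]=0$, i.e.\ from the way the form was extended to $\fp$; the isotropy $\<\bfk,\bfk\>=0$ is rather what guarantees that the target fields $E^\gamma(z)$ have integral powers of $z$ and are mutually local of order zero, which is where it genuinely enters your verification of the $\wh\fp$-relations and of $\Psi(e^\gamma_{-1}e^\lambda)=e^{\gamma+\lambda}$. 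Second, the "delicate step'' you flag does close up, but it is worth recording how: the relation $e^\gamma_{-1}e^\lambda=e^{\gamma+\lambda}$ only treats the mode $e^\gamma(-1)$, and deeper modes are handled by the recursion $e^\gamma(-p-1)X=\frac{1}{p}\bigl(\mathcal{D}\bigl(e^\gamma(-p)X\bigr)-e^\gamma(-p)\mathcal{D}X\bigr)$ applied to $X\in M_\bfk e^\lambda$, which stays inside the inductive hypothesis because $\mathcal{D}$ preserves $M_\bfk e^\mu$ modulo the ideal (using $\mathcal{D}e^\mu=\mu(-1)e^\mu$ and $[\mathcal{D},\bfk(-s)]=s\bfk(-s-1)$), and because $e^\gamma(n)$ commutes with all $\bfk$-modes so that $e^\gamma(-1)$ can be slid past $M_\bfk$ onto $e^\lambda$. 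With that induction made explicit, the spanning bound $\dim(V_{\wh\fp}(1,0)/I)^{(k,N)}\le p_2(N)$ follows and your proof is complete.
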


Recall that $u^\e(z)=\sum_{n\in \Z} u(n) z^{\epsilon-n-1}$ for $u\in \fp$.
In view of the above lemma, we have:

\begin{prpt}\label{VhLandhatp} The $\phi_\e$-coordinated $V_{(\fh,L)}$-modules $(W,Y_W^\e)$ are exactly the restricted $\wh\fp$-modules $W$ of level $1$  satisfying the conditions
that
\begin{align}\label{conditiononpevhL}
\(e^0\)^\e(z)=1,\quad \(e^{\gamma}\)^\e(z)\(e^{\lambda}\)^\e(z)=\(e^{\gamma+\lambda}\)^\e(z),\quad
z^\e\frac{d}{dz} \(e^\gamma\)^\e(z)=\gamma^\e(z) \(e^\gamma\)^\e(z)
\end{align}
for $\gamma,\lambda\in L$. And the correspondence is given by
$Y_W^\e(h,z)=h^\e(z)$ and $Y_W^\e(e^{\gamma},z)=\(e^{\gamma}\)^\e(z)$ for $h\in \fh$ and $\gamma\in L$.
\end{prpt}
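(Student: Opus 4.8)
The plan is to deduce this from Proposition~\ref{prop:vla2}, applied to the affine vertex Lie algebra $\wh\fp$ (with $\mathcal{A}=\fp$, $\mathcal{Z}=\C\rk$), together with the presentation of $V_{(\fh,L)}$ as a quotient of $V_{\wh\fp}(1,0)$ supplied by Lemma~\ref{vhLandvp10}. First I would record that, since for the affine vertex Lie algebra the only nonzero structure constants in \eqref{az-bw} are $a_{(0,0)}b=[a,b]$ and $a_{(1)}b=\<a,b\>\rk$, the $\e$-deformed relations \eqref{eq:relaLe} read
\begin{align*}
[a^{(\e)}(z),b^{(\e)}(w)]=[a,b]^{(\e)}(w)\,z^{\e-1}\delta\(\frac{w}{z}\)+\<a,b\>\,\rk^{(\e)}\,w^\e\frac{\p}{\p w}\,z^{\e-1}\delta\(\frac{w}{z}\),
\end{align*}
and a coefficient comparison (as in the proof of Proposition~\ref{cocor}, here with no shift since $\fp$ carries no conformal generator) shows that $a^{(\e)}(n)\mapsto a(n)$, $\rk^{(\e)}\mapsto\rk$ defines a Lie algebra isomorphism $(\wh\fp)_\e\cong\wh\fp$ intertwining $a^{(\e)}(z)$ with $a^\e(z)=\sum_n a(n)z^{\e-n-1}$. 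Hence a restricted $(\wh\fp)_\e$-module of level $\gamma$ with $\gamma(\rk)=1$ is the same as a restricted $\wh\fp$-module of level $1$, and Proposition~\ref{prop:vla2} identifies these with the $\phi_\e$-coordinated $V_{\wh\fp}(1,0)$-modules $(W,Y_W^\e)$ for which $Y_W^\e(a,z)=a^\e(z)$ for all $a\in\fp$; in particular $Y_W^\e(h,z)=h^\e(z)$ for $h\in\fh$ and $Y_W^\e(e^\gamma,z)=(e^\gamma)^\e(z)$ for $\gamma\in L$.

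By Lemma~\ref{vhLandvp10}, $V_{(\fh,L)}=V_{\wh\fp}(1,0)/I$, where $I$ is the ideal generated by the set \eqref{genvhL}, so a $\phi_\e$-coordinated $V_{(\fh,L)}$-module is precisely a $\phi_\e$-coordinated $V_{\wh\fp}(1,0)$-module $(W,Y_W^\e)$ on which $Y_W^\e(v,z)=0$ for every $v\in I$. Since $\{v\in V_{\wh\fp}(1,0):Y_W^\e(v,z)=0\}$ is an ideal of $V_{\wh\fp}(1,0)$ (which follows in the usual way from the Borcherds identity \eqref{Borcherds} and Lemma~\ref{lem:actofd}), this is equivalent to the vanishing of $Y_W^\e(-,z)$ on the three families of generators $e^0-{\bm 1}$, $e^\gamma_{-1}e^\lambda-e^{\gamma+\lambda}$ and $\mathcal{D}e^\gamma-\gamma_{-1}e^\gamma$ ($\gamma,\lambda\in L$). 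It then remains to show that these three vanishing conditions are, respectively, the three relations in \eqref{conditiononpevhL}; the first is immediate since $Y_W^\e({\bm 1},z)=1_W$.

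For the other two I would exploit that $L=\Z\bfk$ is totally isotropic for $\<\cdot,\cdot\>$, as $\<\bfk,\bfk\>=0$, and that $\<\fp,\C[L]\>=0$: in $\fp$ we then have $[e^\gamma,e^\lambda]=0$ and $[\gamma,e^\gamma]=\<\gamma,\gamma\>e^\gamma=0$, so in $V_{\wh\fp}(1,0)$ one gets $e^\gamma_n e^\lambda=0$ and $\gamma_n e^\gamma=0$ for all $n\ge0$. By \eqref{Borcherds} the fields $(e^\gamma)^\e(z)$ and $(e^\lambda)^\e(z)$ (and likewise $\gamma^\e(z)$ and $(e^\gamma)^\e(z)$) then commute, so the relevant normally ordered products equal the ordinary products, and every correction term in Proposition~\ref{th7} is of the form $Y_W^\e(u_nv,z)$ with $n\ge0$ and hence vanishes. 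Thus $Y_W^\e(e^\gamma_{-1}e^\lambda,z)=(e^\gamma)^\e(z)(e^\lambda)^\e(z)$ and, using Lemma~\ref{lem:actofd}, $Y_W^\e(\mathcal{D}e^\gamma-\gamma_{-1}e^\gamma,z)=z^\e\frac{d}{dz}(e^\gamma)^\e(z)-\gamma^\e(z)(e^\gamma)^\e(z)$, so together with $Y_W^\e(e^{\gamma+\lambda},z)=(e^{\gamma+\lambda})^\e(z)$ the three vanishing conditions become exactly \eqref{conditiononpevhL}, the correspondence being the one recorded in the first paragraph. The main obstacle, I expect, is the bookkeeping inside Proposition~\ref{th7}: one has to be sure the isotropy of $L$ really annihilates every nonnegative-mode product occurring there, and that the leftover normally ordered products can legitimately be rewritten as products of the now-commuting $\phi_\e$-fields — both of which ultimately come down to $\<\bfk,\bfk\>=0$.
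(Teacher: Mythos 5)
Your proposal is correct and follows essentially the same route as the paper: identify $(\wh\fp)_\e\cong\wh\fp$ and invoke Proposition \ref{prop:vla2} to match restricted level-one $\wh\fp$-modules with $\phi_\e$-coordinated $V_{\wh\fp}(1,0)$-modules, then use Lemma \ref{vhLandvp10} to reduce to the vanishing of $Y_W^\e(\cdot,z)$ on the three families of generators, which Proposition \ref{th7} and Lemma \ref{lem:actofd} convert into the three relations of \eqref{conditiononpevhL} because all nonnegative products among $\gamma$, $e^\gamma$, $e^\lambda$ vanish by the isotropy of $L$ and $\<\fp,\C[L]\>=0$. Your explicit remark that the kernel of $Y_W^\e(\cdot,z)$ is an ideal is a justification the paper leaves implicit, but otherwise the two arguments coincide.
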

\begin{proof}
It is routine to see  that the $\epsilon$-deformation  $\wh\fp_\e$ of $\wh\fp$ is isomorphic to $\wh\fp$ itself with
\[u^{(\e)}(n)\mapsto u(n)\quad \text{and}\quad \rk^{(\e)}\mapsto \rk\] for $u\in \fp, n\in \Z$.
Thus from Proposition \ref{prop:vla2} it follows that the $\phi_\e$-coordinated $V_{\wh\fp}(1,0)$-modules $(W,Y_W^\e)$ are exactly the
restricted $\wh\fp$-modules $W$ of level $1$  with
\begin{align}\label{acthatpmod}
Y_W^\e(u,z)=u^\e(z),\quad \forall\ u\in \fp. \end{align}

Lemma \ref{vhLandvp10}  implies that the $\phi_\e$-coordinated $V_{(\fh,L)}$-modules  are exactly
those $\phi_\e$-coordinated $V_{\wh\fp}(1,0)$-modules $(W,Y_W^\e)$ such that
  $Y_W^\epsilon(v,z)=0$ for all $v$ in the set $\eqref{genvhL}$.
  Thus, by  the correspondence \eqref{acthatpmod},
  the $\phi_\e$-coordinated $V_{(\fh,L)}$-modules  are exactly the restricted $\wh\fp$-modules $W$ of level $1$ such that
 the following relations hold:
\begin{align*}
Y_{W}^{\epsilon}(e^{0}-{\bm{1}},z) & =(e^{0})^{\epsilon}(z)-1=0,\\
Y_{W}^{\e}(e_{-1}^{\gamma}e^{\lambda}-e^{\lambda+\gamma},z) & = _{\circ}^{\circ}Y_{W}^{\e}(e^{\gamma},z)Y_{W}^{\e}(e^{\lambda},z) _{\circ}^{\circ}-Y_{W}^{\e}(e^{\gamma+\lambda},z)\\
 & =\ensuremath{\left(e^{\gamma}\right)}^{\epsilon}(z)\ensuremath{\left(e^{\lambda}\right)}^{\epsilon}(z)-\ensuremath{\left(e^{\gamma+\lambda}\right)}^{\epsilon}(z)=0,\\
Y_{W}^{\e}(\mathcal{D}e^{\gamma}-\gamma_{-1}e^{\gamma},z) & =z^{\e}\frac{d}{dz}Y_{W}^{\e}(e^{\gamma},z)- _{\circ}^{\circ}Y_{W}^{\epsilon}(\gamma,z)Y_{W}^{\epsilon}(e^{\gamma},z)_{\circ}^{\circ}\\
 & =z^{\epsilon}\frac{d}{dz}\left(\ensuremath{e^{\gamma}}\right)^{\epsilon}(z)-\gamma^{\epsilon}(z)\ensuremath{\left(e^{\gamma}\right)}^{\epsilon}(z)=0,
\end{align*}
for $\gamma,\lambda\in L$.
Here the second relation follows from \eqref{eq:u-1v} and the fact that  $e^\gamma$ commutes with $e^\lambda$,
and the third relation follows from Lemma \ref{lem:actofd},  \eqref{eq:u-1v} and the fact that  $\gamma$ commutes with $e^\gamma$.
 This completes the proof.
 \end{proof}

Recall the operator $E^{\gamma}(z)$ defined in \eqref{Egammaz}. The following result shows that
there is a canonical $\phi_\e$-coordinated $V_{(\fh,L)}$-module structure on the $\wh\fh$-module $V_{\wh\fh}(\ell,e^{\alpha\bfk}\C[L])$.
Similar results for lattice vertex algebras have been obtained in \cite{JKLT}.

\begin{prpt}\label{vhaisvhlmod} For every $\alpha\in \C$ and $\ell\in \C^\times$, the $\wh\fh$-module $W:=V_{\wh\fh}(\ell,e^{\alpha\bfk}\C[L])$
admits an irreducible $\phi_\e$-coordinated $V_{(\fh,L)}$-module structure $Y^\e_W$, which is uniquely determined by
\begin{align}\label{vhlactonvhl}
Y^\e_W(\bfk,z)=\frac{1}{\ell}\bfk^\epsilon(z),\quad Y^\e_W(\bfd,z)
=\bfd^\epsilon(z),\quad\mathrm{and}\quad Y^\e_W(e^\gamma,z)
=E^{\gamma}(z),\ \forall\gamma\in L.\end{align}
Furthermore, the action of the conformal vector $\omega_\fh$ is (see \eqref{eq:affvirgene2} and \eqref{Lfhz})
\begin{align}\label{YeWomegafh}
Y^\e_W(\omega_\fh,z)=L_\fh^\e(z).
\end{align}
\end{prpt}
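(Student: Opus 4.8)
The plan is to invoke Proposition~\ref{VhLandhatp}: it suffices to endow $W=V_{\wh\fh}(\ell,0)\ot e^{\alpha\bfk}\C[L]$ with a restricted $\wh\fp$-module structure of level $1$ satisfying the three conditions in \eqref{conditiononpevhL}, and then to read off $Y^\e_W$ from the correspondence in that proposition. First I would define the action. The Heisenberg algebra $\wh\fh$ already acts on $W$ at level $\ell$; since the only nonzero value of the form on $\fh$ is $\<\bfk,\bfd\>=1$, letting the element $\bfk\in\fp$ act through $\tfrac{1}{\ell}\bfk(n)$ and $\bfd\in\fp$ act through $\bfd(n)$ ($n\in\Z$) produces a level $1$ action of $\wh\fh$ on $W$, so that $Y^\e_W(\bfk,z)=\tfrac{1}{\ell}\bfk^\e(z)$ and $Y^\e_W(\bfd,z)=\bfd^\e(z)$. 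For $\gamma\in L$, let the basis element $e^\gamma\in\C[L]\subset\fp$ act on $W$ through the modes $(e^\gamma)(n)$ defined by $\sum_{n\in\Z}(e^\gamma)(n)z^{\e-n-1}=E^\gamma(z)$, with $E^\gamma(z)$ as in \eqref{Egammaz}.

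Next I would check that this defines a restricted $\wh\fp$-module of level $1$. The Heisenberg--Heisenberg relations hold by the rescaling; the remaining relations reduce to the standard commutators among the $E^\gamma(z)$ and the Heisenberg modes. Since $L=\Z\bfk$ is totally isotropic ($\<\bfk,\bfk\>=0$) these simplify sharply: every $\bfk(m)$ commutes with every $E^\gamma(z)$ (matching $\<\bfk,\gamma\>=0$); $[\bfd(m),E^\gamma(z)]=\<\bfd,\gamma\>z^m E^\gamma(z)$, equivalently $[\bfd(m),(e^\gamma)(n)]=\<\bfd,\gamma\>(e^\gamma)(m+n)$; and $E^\gamma(z_1)$ and $E^\lambda(z_2)$ commute, there being no contraction factor since $\<\gamma,\lambda\>=0$ and $e^\gamma e^\lambda=e^{\gamma+\lambda}=e^\lambda e^\gamma$, which matches $[(e^\gamma)(m),(e^\lambda)(n)]=0$ (no central term, as $\<\fp,\C[L]\>=0$). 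Restrictedness is immediate for $\bfk^\e(z),\bfd^\e(z)$, and $E^\gamma(z)w\in W((z))$ for $w\in W$ because the annihilation part of $E^\gamma(z)$ acts locally nilpotently.

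It then remains to verify the three conditions in \eqref{conditiononpevhL}: $E^0(z)=1$; $E^\gamma(z)E^\lambda(z)=E^{\gamma+\lambda}(z)$ (again using isotropy and $e^\gamma e^\lambda=e^{\gamma+\lambda}$); and $z^\e\tfrac{d}{dz}E^\gamma(z)=\gamma^\e(z)E^\gamma(z)$ by differentiating \eqref{Egammaz} directly, where one uses $\gamma(0)|_W=0$ (again $\<\bfk,\bfk\>=0$) to recognize the resulting nonzero-mode series as $\gamma^\e(z)$. By Proposition~\ref{VhLandhatp} this yields a $\phi_\e$-coordinated $V_{(\fh,L)}$-module structure $Y^\e_W$ on $W$ with $Y^\e_W(h,z)=h^\e(z)$ for $h\in\fh$ and $Y^\e_W(e^\gamma,z)=E^\gamma(z)$, which is exactly \eqref{vhlactonvhl}; uniqueness holds since $\fh$ together with $\{e^\gamma\mid\gamma\in L\}$ generates the vertex algebra $V_{(\fh,L)}$. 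For irreducibility I would use the standard argument: a nonzero $\phi_\e$-submodule is stable under $\wh\fh$, hence a direct sum over $\gamma\in L$ of Heisenberg submodules of the irreducible (as $\ell\ne0$) Fock spaces $V_{\wh\fh}(\ell,0)\ot\C e^{\alpha\bfk+\gamma}$, and since the operators $E^\gamma(0)$ permute these summands transitively, the submodule must be all of $W$.

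Finally, for the conformal vector $\omega_\fh=\bfk_{-1}\bfd$ I would apply Proposition~\ref{th7} (see Remark~\ref{rem:normalorder}): $Y^\e_W(\bfk_{-1}\bfd,z)={}_\circ^\circ Y^\e_W(\bfk,z)Y^\e_W(\bfd,z){}_\circ^\circ-\sum_{n\ge0}c_n z^{(n+1)(\e-1)}Y^\e_W(\bfk_n\bfd,z)$. In $V_{(\fh,L)}$ one has $\bfk_n\bfd=\delta_{n,1}{\bm 1}$ for $n\ge0$, so only the $n=1$ term survives, with coefficient $-c_1=-\tfrac{(5\e-4)\e}{12}$. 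It then remains to compare $\tfrac{1}{\ell}{}_\circ^\circ\bfk^\e(z)\bfd^\e(z){}_\circ^\circ$ with $\sum_n L_\fh(n)z^{2\e-n-2}$: the two differ only through the reordering of the finitely many mode pairs $\bfk(m)\bfd(-m)$ (with $0\le m<\e$, resp. $\e\le m<0$), which contributes precisely $\tfrac{\e(\e-1)}{2}z^{2(\e-1)}$, and adding $-c_1 z^{2(\e-1)}$ gives the correction $\tfrac{\e^2-2\e}{12}z^{2(\e-1)}$, which is exactly the term in \eqref{eq:affvirgene2} with $\rk_{\mv}=2$; hence $Y^\e_W(\omega_\fh,z)=L_\fh^\e(z)$. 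The main technical obstacle is the bookkeeping in this last step — keeping simultaneous track of the two normal-ordering conventions and the $\tfrac{1}{\ell}$-rescaling of the Heisenberg modes; everything else is routine once Proposition~\ref{VhLandhatp} is in hand.
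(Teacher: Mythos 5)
Your proposal is correct and follows essentially the same route as the paper: reduce to Proposition \ref{VhLandhatp} by exhibiting the level-one restricted $\wh\fp$-module structure (with $\bfk$ rescaled by $1/\ell$ and $e^\gamma$ acting through the modes of $E^\gamma(z)$), verify the three conditions in \eqref{conditiononpevhL} using the isotropy of $L$, and then compute $Y^\e_W(\omega_\fh,z)$ via Remark \ref{rem:normalorder} together with the normal-ordering comparison contributing $\tfrac{1}{2}\e(\e-1)z^{2\e-2}$. The only difference is that you spell out the irreducibility argument, which the paper dismisses as obvious.
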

\begin{proof}  We first claim that the assignment
\begin{align}\label{whfpact}
\bfk(z)\mapsto \frac{1}{\ell}\bfk(z),\quad \bfd(z)\mapsto \bfd(z),\quad \rk\mapsto 1\quad \te{and}\quad e^\gamma(z)\mapsto z^{-\e}E^{\gamma}(z)\  ( \gamma\in L)
\end{align}
gives an irreducible  restricted $\wh\fp$-module structure on the $\wh\fh$-module $V_{\wh\fh}(\ell,e^{\alpha\bfk}\C[L])$
and on which the three conditions stated in \eqref{conditiononpevhL} hold.

Indeed, the commutators among those operators  in \eqref{whfpact} are as follows (see \cite[Proposition 6.5.2]{LL} for example)
\begin{align*}
[\bfk(z),\bfd(w)]=\frac{\partial}{\partial w}z^{-1}\delta\(\frac{w}{z}\),\quad [\bfk(z),\bfk(w)]=0=[\bfd(z),\bfd(w)],\\
[\bfk(z),E^{\gamma}(w)]=0,\quad [\bfd(z),w^{-\e}E^{\gamma}(w)]=\<\bfd,\gamma\>w^{-\e}E^{\gamma}(w)z^{-1}\delta\(\frac{w}{z}\).
\end{align*}
This implies that $V_{\wh\fh}(\ell,e^{\alpha\bfk}\C[L])$ is  a restricted  $\wh\fp$-module of level $1$ (with the action \eqref{whfpact}).
Meanwhile, it is known that (see \cite[(6.5.64)]{LL} for example)
\[ E^{\lambda}(z)E^{\gamma}(z)=E^{\lambda+\gamma}(z),\quad
\frac{d}{dz} E^{\lambda}(z)=\lambda(z)E^{\lambda}(z)
\]
for $\gamma,\lambda\in L$.
From this and  the fact that  $\(e^\gamma\)^\e(z)=E^{\gamma}(z)$, one can conclude  that
 this  $\wh\fp$-module satisfies the conditions in \eqref{conditiononpevhL}, as desired.

In view of Proposition \ref{VhLandhatp}, the above claim implies that the assignment \eqref{whfpact} determines
a $\phi_\e$-coordinated $V_{(\fh,L)}$-module structure on $V_{\wh\fh}(\ell,e^{\alpha\bfk}\C[L])$.
The irreducibility of this $V_{(\fh,L)}$-module is obvious, and it remains to deduce the action $\omega_\fh$ on $W$.

Note that in $V_{\fh,L}$ we have $\bfk_n\bfd=0$ for $n\ge 0$ unless $n=1$, in which case $\bfk_1\bfd={\bm 1}$.
Then from Remark \ref{rem:normalorder} we have
\begin{align*}
Y_{W}^{\e}(\omega_{\fh},z) & =Y^{\e}(\bfk_{-1}\bfd,z)\\
 & =_{\circ}^{\circ}Y_{W}^{\e}(\bfk,z)Y_{W}^{\e}(\bfd,z)_{\circ}^{\circ}-\frac{5\e-4}{12}\e z^{2(\e-1)}\\
 & =\frac{1}{\ell}\ _{\circ}^{\circ}\bfk^{\e}(z)\bfd^{\e}(z)_{\circ}^{\circ}-\frac{5\e-4}{12}\e z^{2(\e-1)}.
\end{align*}
Recall from \eqref{Lfhz} that $L_\fh(z)=\frac{1}{\ell}\ _{\circ}^{\circ}\bfk^\e(z)\bfd^\e(z)_{\circ}^{\circ}$, and from \eqref{eq:affvirgene2} that
\[L_\fh^\e(z)=z^{2\e}L_\fh(z)+\frac{\e^2-2\e}{12} z^{2(\e-1)}=\frac{1}{\ell}z^{2\e}\ _{\circ}^{\circ}\bfk(z)\bfd(z)_{\circ}^{\circ}+\frac{\e^2-2\e}{12} z^{2(\e-1)}.\]

By the definition of normally ordered products, in the $\wh\fh$-module
$V_{\wh\fh}(\ell,e^{\alpha\bfk}\C[L])$ we have
\begin{align*}
\frac{1}{\ell}\ _{\circ}^{\circ}\bfk^{\e}(z)\bfd^{\e}(z)_{\circ}^{\circ} & =\frac{1}{\ell}\ensuremath{\sum_{n<\e}\bfk(n)z^{\e-n-1}\bfd^{\e}(z)+\bfd^{\e}(z)\sum_{n\ge\e}\bfk(n)z^{\e-n-1}}\\
 & =z^{2\e}\frac{1}{\ell}\ensuremath{\sum_{n<\e}\bfk(n)z^{-n-1}\bfd(z)+\bfd(z)\sum_{n\ge\e}\bfk(n)z^{-n-1}}.
\end{align*}
This implies that
\[
\begin{split} & \frac{1}{\ell}\ _{\circ}^{\circ}\bfk^{\e}(z)\bfd^{\e}(z)_{\circ}^{\circ}-z^{2\e}\frac{1}{\ell}\ _{\circ}^{\circ}\bfk(z)\bfd(z)_{\circ}^{\circ}\\
=\, & \begin{cases}
z^{2\e-2}\cdot\sum_{0\le n<\e}\frac{1}{\ell}[\bfk(n),\bfd(-n)],\quad & \te{if}\ \ \e>0\\
0,\quad & \te{if}\ \ \e=0\\
z^{2\e-2}\cdot\sum_{\e\le n<0}\frac{1}{\ell}[\bfd(-n),\bfk(n)],\quad & \te{if}\ \ \e<0
\end{cases}\\
=\, & z^{2\e-2}\frac{1}{2}\e(\e-1).
\end{split}
\]

Summarizing the above relations we obtain
\begin{align*}
Y_{W}^{\e}(\omega_{\fh},z) & =\frac{1}{\ell}\ _{\circ}^{\circ}\bfk^{\e}(z)\bfd^{\e}(z)_{\circ}^{\circ}-\frac{5\e-4}{12}\e z^{2(\e-1)}\\
 & =z^{2\e}\frac{1}{\ell}\ _{\circ}^{\circ}\bfk(z)\bfd(z)_{\circ}^{\circ}+z^{2\e-2}\frac{1}{2}\e(\e-1)-\frac{5\e-4}{12}\e z^{2(\e-1)}\\
 & =\frac{1}{\ell}z^{2\e}\ _{\circ}^{\circ}\bfk(z)\bfd(z)_{\circ}^{\circ}+\frac{\e^{2}-2\e}{12}z^{2(\e-1)}=L_{\fh}^{\e}(z).
\end{align*}
This finishes the proof.
\end{proof}

\subsection{Proof of theorem \ref{mainth}}\label{sec6}
This section is devoted to a proof  of Theorem \ref{mainth}.

By taking the tensor product of the latter two vertex algebras considered in last subsection,	
for every nonzero complex number $\ell$, we have a conformal vertex algebra
\[V_{\wh\fg\rtimes \mv}(\ell,24\mu\ell-2)\ot V_{(\fh,L)},\] {which has a conformal vector}
$$\omega=\omega_\fg\otimes \mathbf{1}+\mathbf{1}\otimes \omega_\fh.$$
We will often view $V_{\wh\fg\rtimes \mv}(\ell,24\mu\ell-2)$ and $V_{(\fh,L)}$ as subalgebras of
$V_{\wh\fg\rtimes \mv}(\ell,24\mu\ell-2)\ot V_{(\fh,L)}$ in the canonical way.
The following result was obtained in \cite[Corollary 5.9]{CLiT}:

\begin{prpt}\label{vahom}
Let $\ell\in \C^\times$.
Then there exists a vertex algebra epimorphism
$$\Theta:V_{\wh{\ft}(\fg)^0}(\gamma_\ell)\rightarrow
V_{\wh\fg\rtimes \mv}(\ell,24\mu\ell-2)\ot V_{(\fh,L)},$$
which is uniquely determined by
\begin{align}\label{theta-1}
&\Theta(t_1^m\ot u)=u\ot e^{m\bfk},  \quad
\Theta(\rk_1)=\ell\bfk,  \quad
\Theta(\rd_1)=\bfd,  \quad
\Theta(\rK_n)=\frac{\ell}{n}e^{n\bfk}, \\
&\quad\qquad \Theta(\rD_n)=n\omega_{-1}e^{n\bfk}-\omega_0\bfd_{-1}e^{n\bfk}
+n^2(\mu\ell-1)\bfk_{-2}e^{n\bfk}\label{theta-2}
\end{align}
for $u\in \fg,\ m\in \Z$ and $n\in \Z^\times$.
\end{prpt}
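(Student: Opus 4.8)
The plan is to obtain $\Theta$ from the universal property of the vertex algebra $V_{\wh{\ft}(\fg)^0}(\gamma_\ell)$ built in \cite{DLM} out of the vertex Lie algebra $(\wh{\ft}(\fg)^0,\mathcal B_\fg,\C\rk_0,\theta_0)$. That vertex algebra is strongly generated by $\mathcal B_\fg$, and for any vertex algebra $K$ a linear map $\pi\colon\mathcal B_\fg\to K$ extends to a (necessarily unique) vertex algebra homomorphism $V_{\wh{\ft}(\fg)^0}(\gamma_\ell)\to K$ precisely when the fields $Y_K(\pi(a),z)$ for $a\in\mathcal B_\fg$ satisfy the defining operator product relations of the vertex Lie algebra with $\rk_0$ acting by the scalar $\ell$; equivalently, precisely when these fields make $K$ into a restricted $\wh{\ft}(\fg)^0$-module of level $\ell$. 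So the argument divides into three steps: (i) check that the prescribed images endow $K:=V_{\wh\fg\rtimes\mv}(\ell,24\mu\ell-2)\ot V_{(\fh,L)}$ with such a module structure; (ii) invoke the universal property to produce $\Theta$ and note its uniqueness; (iii) verify surjectivity.

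For step (i) I would first read off the fields on $K$ corresponding to the prescribed vectors: $Y_K(u\ot e^{m\bfk},z)={}_{\circ}^{\circ}u(z)E^{m\bfk}(z){}_{\circ}^{\circ}$, $Y_K(\ell\bfk,z)=\ell\bfk(z)$, $Y_K(\bfd,z)=\bfd(z)$, $Y_K(\tfrac{\ell}{n}e^{n\bfk},z)=\tfrac{\ell}{n}E^{n\bfk}(z)$, and the longer field $Y_K(\Theta(\rD_n),z)$ obtained by expanding $n\omega_{-1}e^{n\bfk}-\omega_0\bfd_{-1}e^{n\bfk}+n^2(\mu\ell-1)\bfk_{-2}e^{n\bfk}$ with $\omega=\omega_\fg\ot\mathbf 1+\mathbf 1\ot\omega_\fh$. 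One then has to verify that these fields satisfy all the brackets of Proposition \ref{prop:hgcomm} specialised to $\e=0$ (in which case $\phi_0$-coordinated modules are ordinary modules). The brackets (2), (4), (5), (8), (9) are immediate; (1) uses $[\bfk(z_1),E^{\gamma}(z_2)]=0$ and $\<\bfk,\bfk\>=0$; (3), (6), (7), (11) reduce to the operator product expansions of $\bfd(z)$, $\bfk(z)$ and $\omega(z)$ against $E^{\gamma}(w)$ inside the lattice vertex algebra $V_{(\fh,L)}$, together with the Virasoro OPE of $\omega$. The substantive relation is (10), the $[\rD^\e_k(z),\rD^\e_l(w)]$ bracket at $\e=0$: the $\mu$-dependent central terms there must be matched against the central charge of $\omega$ --- the constant $24\mu\ell-2$ being chosen exactly so that $\omega=\omega_\fg\ot\mathbf 1+\mathbf 1\ot\omega_\fh$ has total central charge $(24\mu\ell-2)+2=24\mu\ell$ --- combined with the expansions of $\omega(z)E^{\gamma}(w)$, $\bfd(z)E^{\gamma}(w)$ and $\bfk(z)E^{\gamma}(w)$. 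This is where the precise shape of $\Theta(\rD_n)$, in particular the $-1$ in front of $\omega_0\bfd_{-1}e^{n\bfk}$ and the factor $n^2(\mu\ell-1)$ in front of $\bfk_{-2}e^{n\bfk}$, is forced, and it is the main obstacle of the proof. In essence this computation is Billig's module construction in \cite{B1}, so one legitimate route is to quote that and perform only the bookkeeping required to match conventions.

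Steps (ii) and (iii) are then routine. The module structure of step (i) yields $\Theta$ by the universal property, and uniqueness holds because $\mathcal B_\fg$ generates $V_{\wh{\ft}(\fg)^0}(\gamma_\ell)$. For surjectivity, note that $\mathrm{Im}\,\Theta$ is a vertex subalgebra containing $\mathbf 1\ot e^{\gamma}$ for all $\gamma\in L$ (from $\Theta(\rK_n)$ with $n\neq0$, since $\ell\neq0$, and $e^0=\mathbf 1$), as well as $\mathbf 1\ot\bfk$ (from $\Theta(\rk_1)=\ell\bfk$ with $\ell\neq0$) and $\mathbf 1\ot\bfd$, hence all of $\mathbf 1\ot V_{(\fh,L)}$; it contains $u\ot\mathbf 1$ for $u\in\fg$, recovered as the coefficient of $z^0$ in $Y_K(u\ot e^{m\bfk},z)(\mathbf 1\ot e^{-m\bfk})$, where no fractional powers of $z$ occur because $\<\bfk,\bfk\>=0$; and it contains $\omega_\fg\ot\mathbf 1$, extracted from $\Theta(\rD_n)$ after subtracting off the part lying in $\mathbf 1\ot V_{(\fh,L)}$ and applying the resulting field to $\mathbf 1\ot e^{-n\bfk}$. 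Since $\{u\ot\mathbf 1\mid u\in\fg\}\cup\{\omega_\fg\ot\mathbf 1\}$ strongly generates $V_{\wh\fg\rtimes\mv}(\ell,24\mu\ell-2)\ot\mathbf 1$ and $\mathbf 1\ot V_{(\fh,L)}$ is already in the image, $\mathrm{Im}\,\Theta=K$, so $\Theta$ is an epimorphism.
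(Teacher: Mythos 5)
Your outline is sound, but note first that the paper does not prove this proposition at all: it is imported verbatim as \cite[Corollary~5.9]{CLiT}, which in turn rests on Billig's computation in \cite{B1}. So there is no in-paper argument to compare against; what you have written is the natural reconstruction of how the cited result is obtained. Your three steps are the right ones: the universal property of $V_{\mathcal L}(\gamma)$ for the vertex Lie algebra $(\wh\ft(\fg)^0,\mathcal B_\fg,\C\rk_0,\theta_0)$ does reduce existence and uniqueness of $\Theta$ to checking that the prescribed fields turn $K$ into a restricted $\wh\ft(\fg)^0$-module of level $\ell$ (i.e.\ satisfy Proposition~\ref{prop:hgcomm} at $\e=0$, where $\phi_0$-coordinated modules are ordinary modules), and your surjectivity argument is correct --- indeed simpler than you make it, since $\Theta(t_1^0\ot u)=u\ot e^{0}=u\ot\mathbf 1$ gives the $\fg$-generators directly without extracting coefficients, and $\omega_\fg\ot e^{n\bfk}$ is obtained from $\Theta(\rD_n)$ exactly as you say because the $\omega_0\bfd_{-1}e^{n\bfk}$, $\bfk_{-2}e^{n\bfk}$ and $(\omega_\fh)_{-1}e^{n\bfk}$ pieces all lie in $\mathbf 1\ot V_{(\fh,L)}$. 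Your central-charge bookkeeping $(24\mu\ell-2)+2=24\mu\ell$ is also the correct mechanism by which the parameter $24\mu\ell-2$ is forced.

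The one substantive reservation is that the entire technical content of the proposition is the verification of relation (10) (and, to a lesser extent, (1), (3), (6), (7)) for the concrete fields $n\,{}_{\circ}^{\circ}Y(\omega,z)E^{n\bfk}(z){}_{\circ}^{\circ}-\frac{d}{dz}{}_{\circ}^{\circ}\bfd(z)E^{n\bfk}(z){}_{\circ}^{\circ}+n^2(\mu\ell-1)\bigl(\frac{d}{dz}\bfk(z)\bigr)E^{n\bfk}(z)$, and you defer this wholesale to \cite{B1}. That is a legitimate citation (it is exactly what \cite{CLiT} does), but as written your step (i) is a plan for a computation rather than the computation; a self-contained proof would need the OPE of two such fields carried out, including the cancellations that make the cubic-in-$k$ coefficients of $\rK_{k+l}$ come out with the binomial pattern in (10). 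Also, your remark that relation (1) follows from ``$[\bfk(z_1),E^\gamma(z_2)]=0$ and $\<\bfk,\bfk\>=0$'' is too quick: producing the $\rK_{m+n}$ and $\rk_0,\rk_1$ terms on the right of (1) also uses the differential equation $\frac{d}{dz}E^{\gamma}(z)=\gamma(z)E^{\gamma}(z)$ and the affine central term of $[u(z),v(w)]$. None of this invalidates the approach; it just means the proposal is a correct skeleton whose flesh lives in \cite{B1} and \cite{CLiT}, exactly where the paper itself leaves it.
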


The following result asserts that every (irreducible) $\phi_{\e}$-coordinated $V_{\wh\fg\rtimes \mv}(\ell,24\mu\ell-2)\ot V_{(\fh,L)}$-module admits
an (irreducible) $\wt\ft(\fg)^\e$-module structure.

\begin{thmt}\label{thm3}
Let $(W,Y_W^\e(\cdot,z))$ be a
$\phi_{\e}$-coordinated $V_{\wh\fg\rtimes \mv}(\ell,24\mu\ell-2)\ot V_{(\fh,L)}$-module with $\ell\in \C^\times$.
Then there is a  $\wt\ft(\fg)^\e$-module structure on $W$ with the actions given by
\begin{align}
t_{0}^{\e-1}\rd_{0}&=  -\te{Res}_{z}z^{-\e}Y_{W}^{\e}(\omega,z),\quad\rk_{0}=\ell,\quad(t_{1}^{m}\ot u)^{\e}(z)=Y_{W}^{\e}(u\ot e^{m\bfk},z),\label{modre1}\\
\rk_{1}^{\e}(z)&=  \ell Y_{W}^{\e}(\bfk,z),\quad\rd_{1}^{\e}(z)=Y_{W}^{\e}(\bfd,z),\quad\rK_{n}^{\e}(z)=\frac{\ell}{n}Y_{W}^{\e}(e^{n\bfk},z),\\
\rD_{n}^{\e}(z)&=  n_{\circ}^{\circ}Y_{W}^{\e}(\omega,z)Y_{W}^{\e}(e^{n\bfk},z)_{\circ}^{\circ}-z^{\e}\frac{d}{dz}\ _{\circ}^{\circ}Y_{W}^{\e}(\bfd,z)Y_{W}^{\e}(e^{n\bfk},z)_{\circ}^{\circ}\label{modre2}\\
 &\  +\frac{1}{2}n\e(\e-1)z^{2\e-2}Y_{W}^{\e}(e^{n\bfk},z)+n^{2}(\mu\ell-1)\ensuremath{z^{\e}\frac{d}{dz}Y_{W}^{\e}(\bfk,z)}Y_{W}^{\e}(e^{n\bfk},z)\nonumber
\end{align}
for $u\in \fg,\ m\in \Z$ and $n\in \Z^\times$.
Furthermore, if the $\phi_{\e}$-coordinated $V_{\wh\fg\rtimes \mv}(\ell,24\mu\ell-2)\ot V_{(\fh,L)}$-module $(W,Y_W^\e(\cdot,z))$ is irreducible, then $W$ is also irreducible as a $\wt\ft(\fg)^\e$-module.
\end{thmt}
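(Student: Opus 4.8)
The plan is to transport the given vertex-algebra structure through the epimorphism $\Theta$ of Proposition \ref{vahom}. Since $\Theta$ is a vertex algebra homomorphism, $\overline{Y}_W^\e(v,z):=Y_W^\e(\Theta(v),z)$ makes $(W,\overline{Y}_W^\e)$ into a $\phi_\e$-coordinated $V_{\wh\ft(\fg)^0}(\gamma_\ell)$-module, and Proposition \ref{prop:phie1} then endows $W$ with a restricted $\wh\ft(\fg)^\e$-module structure of level $\ell$ in which $a^\e(z)=\overline{Y}_W^\e(a,z)=Y_W^\e(\Theta(a),z)$ for every $a\in\mathcal{B}_\fg$. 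Feeding the formulas \eqref{theta-1} for $\Theta$ on $t_1^m\ot u$, $\rk_1$, $\rd_1$ and $\rK_n$ into this yields all of \eqref{modre1} except the action of $t_0^{\e-1}\rd_0$.

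Since $\wt\ft(\fg)^\e=\wh\ft(\fg)^\e\oplus\C t_0^{\e-1}\rd_0$ with $\wh\ft(\fg)^\e$ an ideal, I would complete the module structure by adjoining the operator $D:=-\te{Res}_z z^{-\e}Y_W^\e(\omega,z)$ to represent $t_0^{\e-1}\rd_0$. As $\omega_0=\mathcal{D}$ ($\omega$ being a conformal vector), Lemma \ref{lem:u0v} with $u=\omega$ together with Lemma \ref{lem:actofd} gives $[D,Y_W^\e(v,w)]=-w^\e\tfrac{d}{dw}Y_W^\e(v,w)$ for every $v$; taking $v=\Theta(a)$ and comparing with part (12) of Proposition \ref{prop:hgcomm} shows that $[D,a^\e(w)]$ equals the action of $[t_0^{\e-1}\rd_0,a^\e(w)]$, while also $[D,\rk_0]=0=[t_0^{\e-1}\rd_0,\rk_0]$. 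Since the coefficients of the $a^\e(z)$ ($a\in\mathcal{B}_\fg$) together with $\rk_0$ span $\wh\ft(\fg)^\e$, and since the set of $x\in\wh\ft(\fg)^\e$ on which $\mathrm{ad}(D)$ agrees with the action of $\mathrm{ad}(t_0^{\e-1}\rd_0)$ is a Lie subalgebra (by the Jacobi identity), this agreement is automatic on all of $\wh\ft(\fg)^\e$, so $D$ extends the action to a $\wt\ft(\fg)^\e$-module with $t_0^{\e-1}\rd_0\mapsto D$, giving the last line of \eqref{modre1}.

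It then remains to establish \eqref{modre2}. By Proposition \ref{prop:phie1} and \eqref{theta-2}, $\rD_n^\e(z)=Y_W^\e\big(n\,\omega_{-1}e^{n\bfk}-\omega_0(\bfd_{-1}e^{n\bfk})+n^2(\mu\ell-1)\bfk_{-2}e^{n\bfk},\,z\big)$, and I would expand each summand with the $u_{-1}v$ identity \eqref{eq:u-1v} (in the truncated form of Remark \ref{rem:normalorder}), Lemma \ref{lem:actofd}, and the relation $u_{-2}v=(\mathcal{D}u)_{-1}v$, invoking the standard behaviour of $e^{n\bfk}$ in $V_{(\fh,L)}$: because $\<\bfk,\bfk\>=0$, the vector $e^{n\bfk}$ is primary of conformal weight $0$, so $\omega_0 e^{n\bfk}=\mathcal{D}e^{n\bfk}$ and $\omega_j e^{n\bfk}=0$ for $j\ge 1$; moreover $\bfd_0 e^{n\bfk}=n\,e^{n\bfk}$, $\bfd_j e^{n\bfk}=0$ for $j\ge 1$, and $\bfk_j e^{n\bfk}=0$ for all $j\ge 0$, whence $(\mathcal{D}\bfk)_j e^{n\bfk}=0$ for $j\ge 0$ and, by \eqref{Borcherds}, $Y_W^\e(\bfk,z)$ commutes with $Y_W^\e(e^{n\bfk},z)$. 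Combining the three resulting expressions, the terms proportional to $z^{2\e-1}\tfrac{d}{dz}Y_W^\e(e^{n\bfk},z)$ cancel pairwise and what survives is exactly \eqref{modre2}.

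For the last assertion, suppose $(W,Y_W^\e)$ is irreducible and let $W'$ be a nonzero $\wt\ft(\fg)^\e$-submodule. The coefficients of $\overline{Y}_W^\e(a,z)=a^\e(z)$, $a\in\mathcal{B}_\fg$, all preserve $W'$; since $V_{\wh\ft(\fg)^0}(\gamma_\ell)$ is spanned by monomials $a^{(1)}_{-n_1}\cdots a^{(r)}_{-n_r}\mathbf{1}$ with $a^{(i)}\in\mathcal{B}_\fg$, an induction on $r$ using the $\phi_\e$-coordinated iterate and commutator formulas (\eqref{eq:u-1v}, \eqref{Borcherds}, $u_{-1-k}v=\tfrac{1}{k!}(\mathcal{D}^k u)_{-1}v$, Lemma \ref{lem:actofd}) shows that the coefficients of $\overline{Y}_W^\e(\tilde v,z)$ preserve $W'$ for every $\tilde v$. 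As $\Theta$ is surjective, every $v$ in the target vertex algebra is $\Theta(\tilde v)$ for some such $\tilde v$, so $Y_W^\e(v,z)=\overline{Y}_W^\e(\tilde v,z)$ preserves $W'$; hence $W'$ is a $\phi_\e$-coordinated submodule, and irreducibility of $(W,Y_W^\e)$ forces $W'=W$. The step I expect to be most delicate is the third paragraph: though conceptually elementary, it requires careful tracking of the correction terms in \eqref{eq:u-1v} and of the modes of $e^{n\bfk}$ in $V_{(\fh,L)}$ in order for the derivative contributions to cancel.
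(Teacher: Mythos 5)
Your proposal is correct and follows essentially the same route as the paper: transport the module structure through $\Theta$ and Proposition \ref{prop:phie1}, adjoin $t_0^{\e-1}\rd_0$ as $-\te{Res}_z z^{-\e}Y_W^\e(\omega,z)$ via Lemmas \ref{lem:u0v} and \ref{lem:actofd} matched against Proposition \ref{prop:hgcomm}(12), expand $\Theta(\rD_n)$ with Remark \ref{rem:normalorder} and the mode facts $\bfk_i e^{n\bfk}=0$, $\bfd_i e^{n\bfk}=n\delta_{i,0}e^{n\bfk}$ so that the $z^{2\e-1}\tfrac{d}{dz}$ terms cancel, and deduce irreducibility from the surjectivity of $\Theta$. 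The only cosmetic difference is that you spell out the generation argument behind the submodule correspondence, which the paper absorbs into the statement of Proposition \ref{prop:phie1}.
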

	
\begin{proof}	Let $(W,Y_W^\e(\cdot,z))$ be a
$\phi_{\e}$-coordinated $V_{\wh\fg\rtimes \mv}(\ell,24\mu\ell-2)\ot V_{(\fh,L)}$-module with $\ell\ne 0$.
For every $v\in V_{\wh{\ft}(\fg)^0}(\gamma_\ell)$, set
\[Y_W^\e[v,z]:=Y_W^\e(\Theta(v),z)\in \mathrm{Hom}(W,W((z))).\]
 From Proposition \ref{vahom}, it follows that $(W,Y_W^\e[\cdot,z])$
is a $\phi_\e$-coordinated $V_{\wh{\ft}(\fg)^0}(\gamma_\ell)$-module.
Thus, by applying Proposition \ref{prop:phie1}, $W$ becomes a restricted $\wh{\ft}(\fg)_{\mu,\e}$-module of level $\ell$ with
\[a^\e(z)=Y_W^\e[a,z],\quad \forall a\in\mathcal{B}_\fg.\]
Assume further that $(W,Y_W^\e(\cdot,z))$ is irreducible. Then the $\phi_\e$-coordinated $V_{\wh{\ft}(\fg)^0}(\gamma_\ell)$-module
 $(W,Y_W^\e[\cdot,z])$ is also irreducible as the map $\Theta$ is surjective.
This together with Proposition \ref{prop:phie1} gives that  as a $\wh{\ft}(\fg)^\e$-module, $W$ is still irreducible.

For every $v\in V_{\wh\fg\rtimes \mv}(\ell,24\mu\ell-2)\ot V_{(\fh,L)}$,
 it follows  from  Lemma \ref{lem:u0v} and Lemma \ref{lem:actofd}  that
\begin{align*}
[\te{Res}_zz^{-\e} Y_W^\e(\omega,z), Y_W^\e(v,w)]
=Y_W^\e(\omega_0v,w)=Y_W^\e(\mathcal{D}v,w)
=w^\e\frac{d}{dw}Y_W^\e(v,w).
\end{align*}
In particular, for $a\in \mathcal{B}_\fg$, we have
\[[-\te{Res}_zz^{-\e} Y_W^\e(\omega,z), Y_W^\e[a,w]]=-w^\e\frac{d}{dw}Y_W^\e[a,w].\]
This together with Proposition \ref{prop:hgcomm} (12) implies that the $\wh\ft(\fg)^\e$-module $W$ can be extended to a $\wt{\ft}(\fg)^\e$-module with \[t_0^{\e-1}\rd_0=-\te{Res}_zz^{-\e} Y_W^\e(\omega,z).\]
Thus it remains to prove that the operators $Y_W^\e[a,z]$, $a\in \mathcal{B}_\fg$ have the form as given in theorem.
The case that $a\in \wt\fg_1\oplus \sum_{n\in \Z^\times}\C\rK_n$ is obvious, and so we only need to prove that the operators
$Y_W^\e[\rD_n,z]$, $n\in \Z^\times$
coincide with the right hand side of \eqref{modre2}.

Indeed, note that for $i\ge0$ and $n\in\Z$, we have
\[
\bfd_{i}e^{n\bfk}=n\delta_{i,0}e^{n\bfk},\quad\bfk_{i}e^{n\bfk}=0.
\]
This implies that for $j>0$, $(\omega_{\fh})_{j}e^{n\bfk}=(\bfk_{-1}\bfd)_{j}e^{n\bfk}=0$
and hence $\omega_{j}e^{n\bfk}=0$. Thus, by Remark \ref{rem:normalorder}
and Lemma \ref{lem:actofd}, we have
\begin{align*}
Y_{W}^{\e}(\omega_{-1}e^{n\bfk},z) & =_{\circ}^{\circ}Y_{W}^{\e}(\omega,z)Y_{W}^{\e}(e^{n\bfk},z)_{\circ}^{\circ}-\frac{\e}{2}z^{\e-1}Y_{W}(\omega_{0}e^{n\bfk},z)\\
 & =_{\circ}^{\circ}Y_{W}^{\e}(\omega,z)Y_{W}^{\e}(e^{n\bfk},z)_{\circ}^{\circ}-\frac{\e}{2}z^{2\e-1}\frac{d}{dz}Y_{W}(e^{n\bfk},z).
\end{align*}

Similarly, from Remark \ref{rem:normalorder} and Lemma \ref{lem:actofd},
it follows that
\begin{align*}
 & Y_{W}^{\e}\left(\omega_{0}(\bfd_{-1}e^{n\bfk}),z\right)\\
 &\ =z^{\e}\frac{d}{dz}Y_{W}^{\e}(\bfd_{-1}e^{n\bfk},z)\\
 &\ =z^{\e}\frac{d}{dz}\ensuremath{\ _{\circ}^{\circ}Y_{W}^{\e}(\bfd,z)Y_{W}^{\e}(e^{n\bfk},z)_{\circ}^{\circ}-\frac{1}{2}\e z^{\e-1}Y_{W}^{\e}(ne^{n\bfk},z)}\\
 &\ =z^{\e}\frac{d}{dz} \ _{\circ}^{\circ}Y_{W}^{\e}(\bfd,z)Y_{W}^{\e}(e^{n\bfk},z)_{\circ}^{\circ}-\frac{1}{2}n\e(\e-1)z^{2\e-2}Y_{W}^{\e}(e^{n\bfk},z)-\frac{1}{2}n\e z^{2\e-1}\frac{d}{dz}Y_{W}^{\e}(e^{n\bfk},z),
\end{align*}
and that
\begin{align*}
Y_{W}^{\e}(\bfk_{-2}e^{n\bfk},z) &=Y_{W}^{\e}((\omega_{0}\bfk)_{-1}e^{n\bfk},z)=_{\circ}^{\circ}Y_{W}^{\e}(\omega_{0}\bfk,z)Y_{W}^{\e}(e^{n\bfk},z)_{\circ}^{\circ}\\
&=  z^{\e}\frac{d}{dz}Y_{W}^{\e}(\bfk,z)Y_{W}^{\e}(e^{n\bfk},z),
\end{align*}
where we used the fact that $(\omega_{0}\bfk)_{i}e^{n\bfk}$ for $i\ge0$
as $\bfk_{i}e^{n\bfk}=0$.

Finally, by summarizing the above together, we obtain
\begin{align*}
Y_{W}^{\e}[\rD_{n},z] & =Y_{W}^{\e}(\Theta(\rD_{n}),z)\\
 & =Y_{W}^{\e}(n\omega_{-1}e^{n\bfk}-\omega_{0}\bfd_{-1}e^{n\bfk}+n^{2}(\mu\ell-1)\bfk_{-2}e^{n\bfk},z)\\
 & =n\ _{\circ}^{\circ}Y_{W}^{\e}(\omega,z)Y_{W}^{\e}(e^{n\bfk},z)_{\circ}^{\circ}-\frac{1}{2}n\e z^{2\e-1}\frac{d}{dz}Y_{W}^{\e}(e^{n\bfk},z)\\
 & \ -z^{\e}\frac{d}{dz}\ _{\circ}^{\circ}Y_{W}^{\e}(\bfd,z)Y_{W}^{\e}(e^{n\bfk},z)_{\circ}^{\circ}+\frac{1}{2}n\e(\e-1)z^{2\e-2}Y_{W}^{\e}(e^{n\bfk},z)\\
 & \ +\frac{1}{2}n\e z^{2\e-1}\frac{d}{dz}Y_{W}^{\e}(e^{n\bfk},z)+n^{2}(\mu\ell-1)\ensuremath{z^{\e}\frac{d}{dz}Y_{W}^{\e}(\bfk,z)}Y_{W}^{\e}(e^{n\bfk},z)\\
 & =n\ _{\circ}^{\circ}Y_{W}^{\e}(\omega,z)Y_{W}^{\e}(e^{n\bfk},z)_{\circ}^{\circ}-z^{\e}\frac{d}{dz}\ _{\circ}^{\circ}Y_{W}^{\e}(\bfd,z)Y_{W}^{\e}(e^{n\bfk},z)_{\circ}^{\circ}\\
 & \ +\frac{1}{2}n\e(\e-1)z^{2\e-2}Y_{W}^{\e}(e^{n\bfk},z)+n^{2}(\mu\ell-1)\ensuremath{z^{\e}\frac{d}{dz}Y_{W}^{\e}(\bfk,z)}Y_{W}^{\e}(e^{n\bfk},z),
\end{align*}
as desired.
\end{proof}

\textbf{Proof of Theorem \ref{mainth}.} Now we are ready to complete the proof of Theorem \ref{mainth}.
Let $U,\ell,\alpha,\beta\in\C$ be as in Theorem \ref{mainth}.
By Proposition \ref{cocor}, the irreducible highest weight $\wh\fg\rtimes \mv$-module
\[W_1:=L_{\wh\fg\rtimes \mv}(\ell,24\mu\ell-2,U,\beta)\]
is naturally an irreducible $\phi_\e$-coordinated $V_{\wh\fg\rtimes \mv}(\ell,24\mu\ell-2)$-module.
Meanwhile, by Proposition \ref{vhaisvhlmod}, the $\wh\fh$-module
\[W_2:=V_{\wh\fh}(\ell,e^{\alpha\bfk}\C[L])\] is naturally
an irreducible $\phi_\e$-coordinated
$V_{(\fh,L)}$-module.
Thus, by taking tensor product (see Lemma \ref{tensorphiemod}) we obtain an irreducible $\phi_\e$-coordinated
$V_{\wh\fg\rtimes \mv}(\ell,24\mu\ell-2)\ot V_{(\fh,L)}$-module
\[W:=W_1\ot W_2=L_{\wh\fg\rtimes \mv}(\ell,24\mu\ell-2,U,\beta)\ot V_{\wh\fh}(\ell,e^{\alpha\bfk}\C[L])\]
with the actions given by
\begin{align*} &Y_W^\e(u,z)=Y_{W_1}^\e(u,z)\ot 1=u^\e(z)\ot 1=u^\e(z),\\
 &Y_W^\e(\bfk,z)=1\ot Y_{W_2}^\e(\bfk,z)=1\ot \frac{1}{\ell}\bfk^{\e}(z)=\frac{1}{\ell}\bfk^{\e}(z),\\
 &Y_W^\e(\bfd,z)=1\ot Y_{W_2}^\e(\bfd,z)=1\ot \bfd^{\e}(z)=\bfd^\e(z),\\
 &Y_W^\e(e^{n\bfk},z)=1\ot Y_{W_2}^\e(e^{n\bfk},z)=1 \ot E^{n\bfk}(z)=E^{n\bfk}(z),\\
&Y_W^\e(\omega,z)=Y_{W_1}^\e(\omega_\fg,z)\ot 1+
1\ot Y^\e_{W_2}(\omega_\fh,z)=L_\fg^\e(z)\ot 1+1\ot L_\fh^\e(z)=L_\ff^\e(z),\end{align*}
for $ u\in\fg,\ n\in\Z$, where we have used the actions
\eqref{actgvironw}, \eqref{vhlactonvhl}, the fact \eqref{YeWomegafh}, and the conventions \eqref{shortconvention1}, \eqref{shortconvention2}.

According to Theorem \ref{thm3}, the $\phi_\e$-coordinated $V_{\wh\fg\rtimes \mv}(\ell,24\mu\ell-2)\ot V_{(\fh,L)}$-module
$W$
is also  an irreducible $\wt\ft(\fg)^\e$-module with $\rk_0=\ell$ and
\begin{align*}
t_{0}^{\e-1}\rd_{0}&=  -\te{Res}_{z}z^{-\e}Y_{W}^{\e}(\omega,z)=-L_{\ff}(\e-1)+\delta_{\e,1}\mu\ell,\\
(t_{1}^{m}\ot u)^{\e}(z)&=  Y_{W}^{\e}(u\ot e^{m\bfk},z)=(Y_{W_{1}}^{\e}(u,z)\ot1)(1\ot Y_{W_{2}}^{\e}(e^{m\bfk},z))=u^{\e}(z)E^{m\bfk}(z),\\
\rk_{1}^{\e}(z)&=  \ell Y_{W}^{\e}(\bfk,z)=\bfk^{\e}(z),\quad\rd_{1}^{\e}(z)=Y_{W}^{\e}(\bfd,z)=\bfd^{\e}(z),\\
\rK_{n}^{\e}(z)&=  \frac{\ell}{n}Y_{W}^{\e}(e^{n\bfk},z)=\frac{\ell}{n}E^{n\bfk}(z),\\
\rD_{n}^{\e}(z)&=  n\ _{\circ}^{\circ}Y_{W}^{\e}(\omega,z)Y_{W}^{\e}(e^{n\bfk},z)_{\circ}^{\circ}-z^{\e}\frac{d}{dz}\ _{\circ}^{\circ}Y_{W}^{\e}(\bfd,z)Y_{W}^{\e}(e^{n\bfk},z)_{\circ}^{\circ}\\
 &\quad  +\frac{1}{2}n\e(\e-1)z^{2\e-2}Y_{W}^{\e}(e^{n\bfk},z)+n^{2}(\mu\ell-1)\ensuremath{z^{\e}\frac{d}{dz}Y_{W}^{\e}(\bfk,z)}Y_{W}^{\e}(e^{n\bfk},z)\\
&=  n\ _{\circ}^{\circ}L_{\ff}^{\e}(z)E^{n\bfk}(z)_{\circ}^{\circ}+\frac{1}{2}n\e(\e-1)z^{2\e-2}E^{n\bfk}(z)-z^{\e}\frac{d}{dz}\ _{\circ}^{\circ}\bfd^{\e}(z)E^{n\bfk}(z)_{\circ}^{\circ}\\
 &\quad +n^{2}(\mu-\frac{1}{\ell})\ensuremath{z^{\e}\frac{d}{dz}\bfk^{\e}(z)}E^{n\bfk}(z).
\end{align*}
This finishes the proof of Theorem \ref{mainth}.

\section*{Acknowledgement}
This work was supported by the National Natural Science Foundation of China (Nos. 11971396, 11971397, 12131018, 12161141001),
and  the Fundamental Research Funds for the Central Universities (No. 20720200067).

\end{document}